\def\BibTeX{{\rm B\kern-.05em{\sc i\kern-.025em b}\kern-.08em
    T\kern-.1667em\lower.7ex\hbox{E}\kern-.125emX}}
\definecolor{subsectioncolor}{RGB}{1,1,0}
\tikzstyle{block} = [draw,rectangle,thick,minimum height=2em,minimum width=2em]
\tikzstyle{sum} = [draw,circle,inner sep=0mm,minimum size=2mm]
\tikzstyle{connector} = [->,thick]
\tikzstyle{line} = [thick]
\tikzstyle{branch} = [circle,inner sep=0pt,minimum size=1mm,fill=black,draw=black]
\tikzstyle{guide} = []
\tikzstyle{snakeline} = [connector, decorate, decoration={pre length=0.2cm,
\newcommand{\longthmtitle}[1]{\mbox{} \emph{\textbf{(#1)}}}
\newcommand{\new}[1]{{\color{black} #1}}
\newcommand{\Ac}{\mathcal{A}}
\newcommand{\Cc}{\mathcal{C}}
\newcommand{\Ec}{\mathcal{E}}
\newcommand{\Fc}{\mathcal{F}}
\newcommand{\Gc}{\mathcal{G}}
\newcommand{\Hc}{\mathcal{H}}
\newcommand{\Kc}{\mathcal{K}}
\newcommand{\Lc}{\mathcal{L}}
\newcommand{\Nc}{\mathcal{N}}
\newcommand{\Uc}{\mathcal{U}}
\newcommand{\Vc}{\mathcal{V}}
\newcommand{\Zc}{\mathcal{Z}}
\newcommand{\abs}[1]{\left| #1 \right|}
\newcommand{\sgn}{\textnormal{sgn}}
\newcommand{\dist}{\textnormal{dist}}
\newcommand{\VI}{\textnormal{VI}}
\newcommand{\SOL}{\textnormal{SOL}}
\newcommand{\oprocendsymbol}{\hbox{$\bullet$}}
\newcommand{\oprocend}{\relax\ifmmode\else\unskip\hfill\fi\oprocendsymbol}
\newcommand{\real}{\ensuremath{\mathbb{R}}}
\newcommand{\until}[1]{\{1,\dots,#1\}}
\newcommand{\ontil}[1]{\{0,\dots,#1\}}
\newcommand{\norm}[1]{\left\lVert#1\right\rVert}
\newcommand{\Dini}{D^{+}}
\newcommand{\pfrac}[2]{\frac{\partial #1}{\partial #2}}
\newcommand{\lambdamin}{\lambda_\textnormal{min}}
\newcommand{\lambdamax}{\lambda_\textnormal{max}}
\newcommand{\proj}[2]{\textnormal{proj}_{#1}\left(#2\right)}
\newtheorem{theorem}{Theorem}[section]
\newtheorem{proposition}[theorem]{Proposition}
\newtheorem{lemma}[theorem]{Lemma}
\newtheorem{remark}[theorem]{Remark}
\newtheorem{problem}{Problem}
\newtheorem{assumption}{Assumption}
\title{Anytime Solvers for Variational Inequalities: \\ the (Recursive) Safe Monotone Flows}
\date{}
\author{Ahmed Allibhoy and Jorge Cort\'es}
\begin{document}
\maketitle

\begin{abstract}
  This paper synthesizes anytime algorithms, in the form of
  continuous-time dynamical systems, to solve monotone variational
  inequalities. We introduce three algorithms that solve this
  problem: the projected monotone flow, the safe monotone flow, and
  the recursive safe monotone flow.  The first two systems admit
  dual interpretations: either as projected dynamical systems or as
  dynamical systems controlled with a feedback controller
  specified by a quadratic program
  derived using techniques from safety-critical control.  The
  third flow bypasses the need to solve quadratic programs along the
  trajectories by incorporating a dynamics whose equilibria
  precisely correspond to such solutions, and interconnecting the
  dynamical systems on different time scales.  We perform a thorough
  analysis of the dynamical properties of all three systems. For the
  safe monotone flow, we show that equilibria correspond exactly
  with critical points of the original problem, and the constraint
  set is forward invariant and asymptotically stable.  The
  additional assumption of convexity and monotonicity allows us to
  derive global stability guarantees, as well as establish the
  system is contracting when the constraint set is polyhedral.  For
  the recursive safe monotone flow, we use tools from singular
  perturbation theory for contracting systems to show KKT points are
  locally exponentially stable and globally attracting, and obtain
  practical safety guarantees. We illustrate the performance of the
  flows on a two-player game example and also demonstrate the
  versatility for interconnection and regulation of dynamical
  processes of the safe monotone flow in an example of a receding
  horizon linear quadratic dynamic game.
\end{abstract}


\section{Introduction}
Variational inequalities encompass a wide range of problems arising in
operations research and engineering applications, including minimizing
a function, characterizing Nash equilibria of a game, and seeking
saddle points of a function.  This paper synthesizes continuous-time
flows whose \emph{trajectories converge to the solution} set of a
monotone variational inequality while \emph{respecting the constraints
  at all times}: hence the terminology \emph{anytime solvers}.

Our motivation for considering this problem is two-fold. First,
iterative algorithms in numerical computing can be interpreted as
dynamical systems. This opens the door for the use of controls and
system-theoretic tools to characterize their qualitative and
quantitative properties, e.g., stability of solutions, convergence
rate, and robustness to disturbances.  In turn, the availability of
such characterization sets the stage for developing sample-data
implementations and systematically designing new algorithms equipped
with desired properties.

The second motivation stems from problems where the solution to the
variational inequality is used to regulate a physical process modeled
as a dynamically evolving plant (e.g., providing setpoints, specifying
optimization-based controllers, steering the plant toward an optimal
steady-state). This type of problem arises in multiple application
areas, including power systems, network congestion control, and
traffic networks.  In these settings, the algorithm used to solve the
variational inequality is interconnected with a plant, and thus the
resulting coupled system naturally lends itself to system-theoretic
analysis and design.  We are particularly interested in situations
where the problem incorporates constraints which, when violated, would
threaten the safe operation of the physical system. In such cases, it
is desirable that the algorithm is \emph{anytime}, meaning that it is
guaranteed to return a feasible point even when terminated before it
has converged to a solution. The anytime property ensures that the
specifications conveyed to the plant remain feasible at all times.

\subsection{Related Work}
The study of the interplay between continuous-time dynamical systems
and variational inequalities has a rich history. In the context of
optimization, classical references
include~\new{\cite{KA-LH-HU:58,RWB:91,UH-JBM:94,CH:73}}.  For general
variational inequalities, flows solving them can be obtained through
differential inclusions involving monotone set-valued maps, originally
introduced in \cite{HB:73}. These systems have been equivalently
described as projected dynamical systems~\cite{AN-DZ:96} and
complementarity systems
\cite{BB-AD-CL-VA:06,WPMHH-JMS-SW:00}. Recently, this framework has
been extended to settings where monotonicity holds with respect to
non-Euclidean norms~\cite{WPMHH-MKC-MFH:20}, and when the system
evolves on a Riemannian manifold~\cite{AH-SB-FD:21}.  One limitation
of these systems is that they are, in general, discontinuous, which
poses challenges both for their theoretical analysis and practical
implementation.

The interconnection of algorithms with physical plants has attracted
much attention recently. The interconnected system can often be viewed
as the coupling of a dynamical system with a set valued operator
\new{(cf. \cite{AT-BB-CP:18, BB-AT:20} for a comprehensive survey of systems having
  this form, and control design problems within this framework)}.  The specific setting where the algorithm optimizes the
steady-state of the plant is typically referred to as \emph{online
  feedback optimization}\cite{MC-EDA-AB:20, AH-ZH-SB-GH-FD:24} and has
been studied in continuous time in the context of power
systems~\cite{ED-AS:16,LSPL-JWSP-EM:21}, network congestion
control~\cite{SHL-FP-JCD:02}, and traffic
networks~\cite{GB-JC-JIP-EDA:22-tcns}, as well as discrete time
in~\cite{VH-AH-LO-SB-FD:21}.  Recently this framework has also been
studied in game scenarios~\cite{AA-JWSP-LP:22,
  GB-DMPL-MHDB-SB-RSS-JL-FD:24}.

A complementary approach uses extremum seeking
control~\cite{KBA-MK:03}, which has been generalized to the setting of
games in~\cite{PF-MK-TB:12}.  Extremum seeking differs from the
methods introduced here in that they are typically zeroth-order
methods, and do not offer exact stability guarantees. The recent
work~\cite{AW-MK-AS:23} considers safety guarantees for extremum
seeking control for the special case of a single inequality
constraint. In fact, the proposed algorithm can be understood as an
approximation of the safe gradient flow in~\cite{AA-JC:24-tac}, which
is a precursor for constrained optimization of the algorithms proposed
here for constraint sets parameterized by multiple inequality and
equalities.

To synthesize our flows, we employ techniques from safety-critical
control~\cite{ADA-SC-ME-GN-KS-PT:19,MC-CB:23}, which refers to the
problem of designing a feedback controller that ensures that the state
of the system satisfies certain constraints.
The problem of ensuring safety is typically formalized by specifying a
set of states where the system is said to remain safe, and ensuring
the safe set is forward invariant.  The work~\cite{FB:99} reviews set
invariance in control. A popular technique for synthesizing safe
controllers uses the concept of control barrier functions (CBFs),
see~\cite{PW-FA:07,ADA-XX-JWG-PT:17,WX-CGC-CB:23} and
references~therein, to specify optimization-based feedback controllers
which ``filter'' a nominal system to ensure it remains in the safe
set.  Here, we apply this strategy to synthesize anytime algorithms,
viewing the constraint set as a safety set and the monotone operator
of the variational inequality as the nominal system. This view has
connections to projected dynamical systems, whose relationship with
CBF-based control design has recently been explored
in~\cite{GD-WPMHH:23,GD-JC-WPMHH:24}, and leads to the alternative
``projection-based'' interpretation of the projected monotone flow and
safe monotone flow proposed here.

\subsection{Statement of Contributions} We consider the synthesis of
continuous-time dynamical systems solving variational inequalities
while respecting the constraints at all times.
\new{We work primarily with continuous-time systems since we are
  motivated by online feedback optimization, where the evolution of a
  physical process determines the functions that define the
  variational inequality, which in turn regulates the dynamically
  evolving plant.  The methods introduced here can be implemented on
  digital computers by discretizing the resulting dynamics.}

We discuss three flows that solve this problem.  The \emph{projected
  monotone flow} is already known, but we provide a novel
control-theoretic interpretation as a control system whose closed-loop
behavior is as close as possible to the monotone operator while still
belonging to the tangent cone of the constraint set.  The \emph{safe
  monotone flow} can be interpreted either as a control system with a
feedback controller synthesized using techniques from safety-critical
control or as an approximation of the projected monotone flow.  The
latter interpretation relies on the novel notion of restricted tangent
set, which generalizes the usual concept of tangent cone from
variational geometry.  We show that equilibria correspond exactly with
critical points of the original problem, establish existence and
uniqueness of solutions, and characterize the regularity properties of
flow.  We also show that the constraint set is forward invariant and
asymptotically stable, and derive global stability guarantees under
the additional assumption of convexity and monotonicity. Our technical
analysis relies on a suite of Lyapunov functions to establish
stability properties with respect to the constraint set and the whole
state space.  When the constraint set is polyhedral, we establish that
the system is contracting and exponentially stable.  Finally, the
\emph{recursive safe monotone flow} bypasses the need for continuously
solving quadratic programs along the trajectories by incorporating a
dynamics whose equilibria precisely correspond to such solutions, and
interconnecting the dynamical systems on different time scales.  Using
tools from singular perturbation theory for contracting systems, we
show that for variational inequalities with polyhedral constraints,
the KKT points are locally exponentially stable and globally
attracting, and obtain practical safety guarantees.  We compare the
three flows on a simple two-player game and demonstrate how the safe
monotone flow can be interconnected with dynamical processes on an
example of a receding horizon linear quadratic dynamic game.

The \new{flows} introduced here generalize the safe gradient flow, a
continuous-time system proposed in our previous
work~\cite{AA-JC:24-tac} (in parallel,~\cite{VH-AH-LO-SB-FD:21}
introduced a discrete-time implementation of a simplified version of
it).  With respect to the safe gradient flow, our treatment extends
the results in three key ways. First, we consider variational
inequalities, rather than just constrained optimization problems,
making the flows introduced here applicable to a much broader range of
problems. Second, with assumptions of monotonicity and convexity of
the constraints, we obtain global stability and convergence results,
rather than local stability results.  Third, the rigorous
characterization of the contractivity properties of the safe monotone
flow paves the way for its interconnection with other dynamically
evolving processes. In fact, the proposed recursive safe monotone flow
critically builds on this analysis by leveraging different timescales
and singular perturbation theory.

\section{Preliminaries}\label{sec:preliminaries}
We review here basic notions from variational inequalities,
projections, and set invariance. Readers familiar with these concepts
can safely skip this section.

\subsection{Notation}
Let $\real$ denote the set of real numbers. For $c \in \real$,
$[c]_+ = \max\{0, c\}$. For $x \in \real^n$, $x_i$ denotes the $i$th
component and $x_{-i}$ denotes all components of $x$ excluding
$i$. For $v, w \in \real^n$, $v \leq w$ (resp. $v < w$) denotes
$v_i \leq w_i$ (resp. $v_i < w_i$) for $i \in \until{n}$. We let
$\norm{v}$ denote the Euclidean norm, \new{and
  $\norm{v}_\infty := \max_{1 \leq i \leq n} |v_i|$ denote the
  infinity norm}. We write $Q \succeq 0$ (resp.,
$Q \succ 0$) to denote $Q$ is positive semidefinite (resp., $Q$ is
positive definite). Given $Q \succeq 0$, let $\norm{x}_Q$ denote the
seminorm where $\norm{x}_Q = \sqrt{x^\top Qx}$.  For a symmetric $Q$,
$\lambda_\text{min}(Q)$ and $\lambdamax{(Q)}$ denote the minimum and
maximum eigenvalues of $Q$, resp. Given $\Cc \subset \real^n$, the
distance of $x \in \real^n$ to $\Cc$ is
$\dist(x, \Cc) = \inf_{y \in \Cc} \norm{x - y}$.
The \emph{projection map onto $\overline{\Cc}$} is
$\text{proj}_\Cc:\real^n \rightrightarrows \overline{\Cc}$, where
$\proj{\Cc}{x} = \big\{ y \in \overline{\Cc} \mid \norm{x - y} = \dist(x, \Cc) \big\}$.
Given a closed and convex set
$\Cc \subset \real^n$, the \emph{normal cone} to $\Cc$ at
$x \in \real^n$ is
$N_\Cc(x) = \{d \in \real^n \mid d^\top (x' - x) \leq 0, \;\forall x'
\in \Cc \}$
and the \emph{tangent cone} to $\Cc$ at $x$ is
$T_\Cc(x) = \{ \xi \in \real^n \mid d^\top \xi \leq 0, \;\forall d \in
N_\Cc(x) \}$.

Given $g:\real^n \to \real$, we denote its gradient by $\nabla g$. For
$g:\real^n \to \real^m$, $\pfrac{g(x)}{x}$ denotes its Jacobian.  For
$I \subset \{1, 2, \dots, m \}$, we denote by $\pfrac{g_{I}(x)}{x}$
the matrix whose rows are $\{\nabla g_i(x)^\top\}_{i \in I}$. Given 
a function $f:\real^n \to \real$,
we say it is directionally differentiable if for all $\xi \in \real^n$, 
the following limit exists 
\[  f'(x; \xi) = \lim_{h \to 0^+} \frac{f(x + h\xi) - f(x)}{h}. \]
Given a
vector field $\Gc:\real^n \to \real^n$ and a function
$V:\real^n \to \real$, the \emph{Upper-right Dini derivative} of $V$
along $\Gc$ is 
\[
  \Dini_{\Gc}V(x) = \limsup_{h \to 0^+} \frac{1}{h}\left( V(\Phi_h(x)) -
    V(x) \right),
\]
where $\Phi_h$ is the flow map\footnote{\new{For
    $h \in \real_{\geq 0}$, the flow map $\Phi_h: \real^n \to \real^n$
    is defined by $\Phi_h(x_0) = x(h)$, where $x(t)$ is the unique
    trajectory solving $\dot{x} = \Gc(x)$ with $x(0) = x_0$.}}
corresponding to the system $\dot{x} = \Gc(x)$.  When $V$ is directionally
differentiable $\Dini_{\Gc}V(x) = V'(x, \Gc(x))$ and when $V$ is
differentiable, then $\Dini_{\Gc}V(x)=\nabla V(x)^\top \Gc(x)$.

\subsection{Variational Inequalities}\label{sec:var-ineqs}
Here we review the basic theory of variational inequalities
following~\cite{FF-JSP:03}. Let $F:\real^n \to \real^n$ be a map and
$\Cc \subset \real^n$ a set of constraints.  A variational inequality
refers to the problem of finding $x^* \in \Cc$ such that
\begin{equation}
  \label{eq:variational-inequality}
  (x - x^*)^\top F(x^*) \geq 0, \qquad \forall x \in \Cc.
\end{equation}
We use $\VI(F, \Cc)$ to refer to the
problem~\eqref{eq:variational-inequality} and $\SOL(F, \Cc)$ to denote
its set of solutions. Variational inequalities provide a framework to
study many different analysis and optimization problems, including
\begin{itemize}
\item Solving the nonlinear equation $F(x^*) = 0$, which corresponds
  to $\VI(F, \real^n)$;
\item Minimizing the function $f:\real^n \to \real$ subject to the
  constraint that $x \in \Cc$, which corresponds to
  $\VI(\nabla f, \Cc)$;
\item Finding saddle points of the function
  $\ell:\real^n \times \real^m \to \real$ subject to the constraints
  that $x_1 \in X_1$ and $x_2 \in X_2$, which corresponds to
  $\VI([\nabla_{x_1} \ell ; -\nabla_{x_2} \ell], X_1 \times X_2)$.
\item Finding the Nash equilibria of a game with $N$ agents, where the
  $i$th agent wants to minimize the cost $J_i(x_i, x_{-i})$ subject to
  the constraint $x_i \in X_i$, which corresponds to $\VI(F, \Cc)$,
  where $F$ is the \emph{pseudogradient} operator defined by
  $ F(x) = (\nabla_{x_1}J_1(x), \dots, \nabla_{x_N}J_N(x))$,
  and $\Cc = X_1 \times X_2 \times \cdots \times X_N$.
\end{itemize}
The map $F:\real^n \to \real^n$ is \emph{monotone} if
$ (x_1 - x_2)^\top (F(x_1) - F(x_2)) \geq 0$, for all
$x_1, x_2 \in \real^n$, and $F$ is \emph{$\mu$-strongly monotone} if
there exists $\mu > 0$ such that
$ (x_1 - x_2)^\top (F(x_1) - F(x_2)) \geq \mu \norm{x_1 - x_2}^2$, for
all $x_1, x_2 \in \real^n$. When $F$ is a gradient map, i.e.,
$F = \nabla f$ for some function $f:\real^n \to \real$, then
monotonicity (resp. $\mu$-strong monotonicity) is equivalent to
convexity (resp. strong convexity) of $f$. When $F$ is monotone and
$\Cc$ is convex, $\VI(F, \Cc)$ is a \emph{monotone variational
  inequality}.

In order to provide a characterization of the solution set
$\SOL(F, \Cc)$, we need to introduce a more explicit description of
the set of constraints.  Suppose that $g:\real^n \to \real^m$ and
$h:\real^n \to \real^k$ are continuously differentiable and $\Cc$ is
described by inequality constraints and affine equality
constraints,
\begin{equation}
  \label{eq:constraint-set}
  \Cc = \{ x \in \real^n \mid g(x) \leq 0,\; h(x) = Hx - c_{h} = 0 \},
\end{equation}
where $H \in \real^{k \times n}$ and $c_{h} \in \real^{k}$.  For
$x \in \real^n$, we denote the \emph{active constraint}, \emph{inactive
  constraint}, and \emph{constraint violation} sets, resp., as
\[
  \begin{aligned}
    I_0(x) &= \{ i \in [1, m] \mid g_i(x) = 0 \},
    \\
    I_-(x) &= \{ i \in [1, m] \mid g_i(x) < 0 \},
    \\
    I_+(x) &= \{ i \in [1, m] \mid g_i(x) > 0 \}.
  \end{aligned}
\]
The problem~\eqref{eq:variational-inequality} satisfies the
constant-rank condition at $x \in \Cc$ if there exists an open set $U$
containing $x$ such that for all $I \subset I_0(x)$, the rank of
$\{\nabla g_i(y) \}_{i \in I} \cup \{ \nabla h_j(y) \}_{j=1}^{k}$
remains constant for all $y \in U$.  The
problem~\eqref{eq:variational-inequality} satisfies the
Mangasarian-Fromovitz Constraint Qualification (MFCQ) condition at
$x \in \Cc$ if $\{\nabla h_j(x)\}_{i=1}^{k}$ are linearly independent,
and there exists $\xi \in \real^n$ such that
$\nabla g_i(x)^\top \xi < 0$ for all $i \in I_0(x)$, and
$\nabla h_j(x)^\top \xi = 0$ for all $j \in \until{k}$.  If MFCQ holds
at $x^* \in \Cc$, then if $x^*$ satisfies
\eqref{eq:variational-inequality}, there exists
$(u^*, v^*) \in \real^m \times \real^k$ such that
\begin{subequations}
  \label{eq:KKT}
  \begin{align}
    \label{eq:KKT1}
    F(x^*) + \sum_{i=1}^{m}u^*_i\nabla g_i(x^*) +
    \sum_{j=1}^{k}v_j^*\nabla h_j(x^*) &= 0
    \\ 
    g(x^*) &\leq 0
    \\
    h(x^*) &= 0
    \\
    u^* &\geq 0
    \\
    (u^*)^\top g(x^*) &= 0.
  \end{align}
\end{subequations}
Equations \eqref{eq:KKT} are called the \emph{KKT conditions}. A point
$(x^*, u^*, v^*)$ satisfying them is a \emph{KKT triple} and the pair
$(u^*, v^*)$ is a \emph{Lagrange multiplier}.  For monotone
variational inequalities, when MFCQ holds at $x^*$, then the KKT
conditions are both necessary and sufficient for
$x^* \in \SOL(F, \Cc)$, \new{cf.~\cite[Proposition
  1.3.4]{FF-JSP:03}}. When~$F$ is monotone, $\SOL(F, \Cc)$ is closed
and convex, \new{cf.~\cite[Proposition 2.3.5]{FF-JSP:03}}.  If $F$ is
additionally $\mu$-strongly monotone, and $\SOL(F, \Cc) \neq \emptyset$, then the set of solutions is a
singleton, \new{cf.~\cite[Theorem 2.3.3]{FF-JSP:03}}.

\subsection{Controller Synthesis for Set
  Invariance}\label{sec:set-invariance}
We review here notions from the theory of set invariance for control
systems following~\cite{FB:99} and discuss methods for synthesizing
feedback controllers that ensure invariance.  Consider a control-affine system
\begin{equation}
  \begin{aligned}
    \label{eq:control-system}
    \dot{x} &= \Fc(x, \nu)  \\
    &= F_0(x) + \sum_{i=1}^{r}\nu_iF_i(x),
  \end{aligned}
\end{equation}
with Lipschitz-continuous vector fields $F_i:\real^n \to \real^n$, for
$i \in \ontil{r}$, and a set \new{$\Uc \subset \real^r$} of valid control
inputs~$\nu$.  Let $\Cc \subset \real^n$ be a constraint set of the
form~\eqref{eq:constraint-set} to which we want to restrict the
evolution of the system.  We consider the problem of designing a
feedback controller $\kappa:\real^n \to \Uc$ such that $\Cc$ is forward
invariant with respect to the closed-loop
dynamics $\dot{x} = \Fc(x, \kappa(x))$. In applications, $\Cc$ often
corresponds to the set of states for which the system can operate
safely.  For this reason, we refer to $\Cc$ as the \emph{safety set},
and call the system \emph{safe} under a controller $\kappa$ if $\Cc$ is
forward invariant. A controller ensuring
safety is \emph{safeguarding}.  We discuss two optimization-based
strategies for synthesizing safeguarding controllers.

\subsubsection{Safeguarding Control via
  Projection}\label{sec:proj-feedback}
The first strategy ensures the closed-loop dynamics lie in the tangent
cone of the safety set. Whenever MFCQ holds at $x \in \Cc$, the tangent cone
can conveniently be expressed as, cf. \cite[Theorem
6.31]{RTR-RJBW:98},
$ T_\Cc(x) = \{ \xi \in \real^n \left| \pfrac{h(x)}{x}\xi = 0,
  \pfrac{g_{I_0}(x)}{x}\xi \leq 0\right. \} $.
We then define the
set-valued map $K_\text{proj}:\real^n \rightrightarrows \Uc$ which
characterizes the set of inputs, \new{$\nu$, such 
that $\Fc(x, \nu) \in T_\Cc(x)$.} The set has the form,
\begin{align*}
  K_\text{proj}(x)
    \!=\! \Big\{ \mu \in \Uc \;\Big|\;
  &\Dini_{F_0}g_i(x) \! + \!
    \sum_{\ell=1}^{r}\mu_\ell\Dini_{F_\ell}g_i(x)
    \leq 0,\, i \in I(x),
  \\
  &\Dini_{F_0}h_j(x) +
    \sum_{\ell=1}^{r}\mu_\ell\Dini_{F_\ell}h_j(x)
    = 0,\, j=1, \dots, k  \Big\}.
\end{align*}
As we show in the following result, the feedback $k:\Cc \to \Uc$ such that
$k(x) \in K_\text{proj}(x)$ for $x \in \Cc$ renders $\Cc$ forward
invariant. The proof is omitted for brevity, though it follows
directly from Nagumo's Theorem \cite[Theorem~2]{JPA-AC:84}.

\begin{lemma}[Projection-based Safeguarding
    Feedback]\label{lem:proj-feedback}
    Consider the system \eqref{eq:control-system} with safety set $\Cc$
    and suppose that $K_\text{proj}(x) \neq \emptyset$ for all
    $x \in \Cc$. If $\kappa:\Cc \to \Uc$ is a feedback controller
      such that (i) $\kappa(x) \in K_\text{proj}(x)$ for all
      $x \in \Cc$ and (ii) for all initial conditions on $\Cc$, the closed-loop system
      $\dot{x} = \Fc(x, \kappa(x))$ admits a unique absolutely continuous 
      solution satisfying the dynamics almost everywhere, 
      then $\kappa$ is safeguarding. 
\end{lemma}

To synthesize a safeguarding controller, we propose a strategy where
for each $x \in \Cc$, $\kappa(x)$ is expressed as the solution to a
mathematical program. Because $K_\text{proj}(x)$ is defined in
terms of affine constraints on the control input~$\nu$, we can readily
express a feedback satisfying the hypotheses of
Lemma~\ref{lem:proj-feedback} in the form of a mathematical program,
\begin{equation}
  \label{eq:qp-proj-controller}
  \kappa(x) \in \underset{\nu \in K_\text{proj}(x)}{\text{argmin}} 
  J(x, \nu) ,
\end{equation}
for an appropriate choice of cost function
$J:\Cc \times \Uc \to \real$.  In general, care must be taken to
ensure that the set $K_\text{proj}$ is nonempty and that the
controller $\kappa$ in~\eqref{eq:qp-proj-controller} satisfies appropriate
regularity conditions to ensure existence and uniqueness for solutions
of the resulting closed-loop dynamics.  Even if these properties hold,
the approach has several limitations: the controller is ill-defined
for initial conditions lying outside the safety set and the
closed-loop system in general is nonsmooth.

\subsubsection{Safeguarding Control via Control Barrier
  Functions}\label{sec:cbf-feedback}
The second strategy for synthesizing safeguarding controllers
addresses the limitations of projection-based methods.  The approach
relies on the notion of a vector control barrier
functions~\cite{ADA-SC-ME-GN-KS-PT:19,AA-JC:24-tac}.  Given a set
$X \supset \Cc$ and a set of valid control inputs
$\Uc \subset \real^{m}$, we say the pair
$(g, h):\real^n \times \real^k \to \real^m$ is a \emph{$(m, k)$-vector
  control barrier function} (VCBF) for $\Cc$ on $X$ relative to $\Uc$
if there exists $\alpha > 0$ such that the map
$K_{\textnormal{cbf}, \alpha}:\real^n \rightrightarrows \Uc$ given by
\begin{align*}
  K_{\textnormal{cbf}, \alpha}(x)
  \!=\!
    \Big\{ \mu \in \Uc \;\Big|\;
  &\Dini_{F_0}g_i(x) \!+\!
    \sum_{\ell=1}^{r}\mu_\ell\Dini_{F_\ell}g_i(x) \!+\! \alpha 
    g_i(x) \leq 0,
  \\
  &\Dini_{F_0}h_j(x) + \sum_{\ell=1}^{r}\mu_\ell\Dini_{F_\ell}h_j(x) + \alpha
    h_j(x) = 0,\, 1 \leq i \leq m, \; 1 \le j \leq k \Big\},
\end{align*}
takes nonempty values for all $x \in X$. Similar to the previous
strategy, the set $K_{\textnormal{cbf}, \alpha}$ characterizes the set of
inputs which ensure that the state remains inside the safe set. Unlike
the previous strategy, this assurance is implemented gradually: the
parameter $\alpha$ corresponds to \new{ how tolerant we are of
  increasing the value of inactive constraints, with small values of
  $\alpha$ corresponding to \new{more restrictions on the growth rate}. To see this, note that if $g_i(x) < 0$ then the left-hand
  sides of the constraints parameterizing $K_{\textnormal{cbf}, \alpha}$
  decrease as $\alpha$ increases, making the constraint easier to
  satisfy.} For $\alpha = \infty$, \new{if $g_i(x) < 0$, then the
  left-hand side of the corresponding constraint becomes $-\infty$,
  thus the only nontrivial constraints correspond to $i$ where
  $g_i(x) = 0$}, and \new{for $x \in \Cc$} the set corresponds
to~$K_{\text{proj}}$.

When $m=1$ and $k=0$, an $(m, k)$-vector control barrier function is
equivalent to the usual notion of a control barrier
function~\cite[Definition~2]{ADA-SC-ME-GN-KS-PT:19}.  The
generalization provided by VCBFs allows us to consider a broader class
of safety sets. 

\begin{lemma}[VCBF-based Safeguarding
    Feedback]\label{lem:safe-feedback}
    Consider the system \eqref{eq:control-system} with safety set
    $\Cc$ and suppose $(g, h)$ is a vector control barrier function
    for $\Cc$ on $X$ relative to $\Uc$ {and MFCQ holds on $\Cc$}.
    If $\kappa:X \to \Uc$ is a feedback controller such that (i)
      $\kappa(x) \in K_{\textnormal{cbf}, \alpha}(x)$ for all
      $x \in X$ and (ii) for all initial conditions on $X$, the closed-loop system
      $\dot{x} = \Fc(x, \kappa(x))$ admits a unique absolutely continuous 
      solution satisfying the dynamics almost everywhere, 
      then $\kappa$ is safeguarding.
\end{lemma}

\new{The proof of the Lemma \ref{lem:safe-feedback} is omitted for
  space, but the result follows by Nagumo's Theorem
  \cite[Theorem~2]{JPA-AC:84} since one can show that the closed-loop
  dynamics are contained in the tangent cone.}  To synthesize a
safeguarding feedback controller, one can pursue a design using a
similar approach to Section~\ref{sec:proj-feedback}. Given a cost
function $J:X \times \Uc \to \real$, we let $\kappa(x)$ solve the
following mathematical program:
\begin{equation}
  \label{eq:qp-cbf-controller}
  \kappa(x) \in \underset{\nu \in K_{\textnormal{cbf}, \alpha}(x)}{\text{argmin}}
  J(x, \nu).  
\end{equation}
Similarly to the case of projection-based safeguarding feedback
control, care must be taken to verify the existence and uniqueness of
solutions to the closed-loop system, as well as to handle situations
where~\eqref{eq:qp-cbf-controller} does not have unique solutions. If
these properties hold, then the control design addresses the
challenges of projection-based methods. In particular, we can ensure
that a controller of the form \eqref{eq:qp-cbf-controller} is
well-defined outside the safety set and results in closed-loop system
with continuous solutions, under mild conditions which we discuss
in the following sections.
\vspace{-2.0ex}

\section{Problem Formulation}
Consider a variational inequality $\VI(F, \Cc)$ defined by a
continuously differentiable map $F:\real^n \to \real^n$ and a convex
set $\Cc$ of the form \eqref{eq:constraint-set}, where
$g:\real^n \to \real^m$ is continuously differentiable.  \new{We
  assume throughout that $\SOL(F, \Cc) \neq \emptyset$}.  Our goal is
to synthesize a solver for this problem in the form of a
continuous-time dynamical system.

\begin{problem}[Anytime solver of variational inequality]
  \label{problem:problem}
  Design a dynamical system, $\dot{x} = \Gc(x)$, which is well-defined
  on a set $X$ containing $\Cc$ such that
  \begin{enumerate}
  \item Trajectories of the system converge to $\SOL(F, \Cc)$;
  \item $\Cc$ is forward invariant;
  \item \new{$\Cc$ is asymptotically stable as a set.}
  \end{enumerate}
\end{problem}

Item (i) ensures that \emph{the dynamical system can be viewed as a 
solver of the problem~\eqref{eq:variational-inequality}}: solutions
can be obtained by simulating system trajectories and taking the limit
as~$t \to \infty$ of~$x(t)$. \new{Furthermore, the solver can be implemented 
on computational platforms by discretizing the dynamics}. 
Item (ii) ensures that this solver is
\emph{anytime}, meaning that even if terminated early, it is
guaranteed to return a feasible solution provided the initial
condition is feasible. Item (iii) accounts for infeasible 
initial conditions, and ensures asymptotic safety. 
Both the expression of the algorithm in the
form of a continuous-time dynamical system and the anytime property
are particularly useful for real-time applications, where the
algorithm might be interconnected with other physical processes --
e.g., when the algorithm output is used to regulate a physical plant
and constraints of the optimization problem ensure the safe operation
of the plant.

In the following, we introduce three dynamics to solve
Problem~\ref{problem:problem}. The first, synthesized using the
technique outlined in Section~\ref{sec:proj-feedback}, is the
\emph{projected monotone flow}. These dynamics are already well-known,
but we reinterpret them here through the lens of control theory. The
next two, synthesized using the technique outlined in
Section~\ref{sec:cbf-feedback}, are the \emph{safe monotone flow} and
the \emph{recursive safe monotone flow}. Both systems are entirely
novel. 


\section{Projected Monotone Flow}\label{sec:pmf}
Here, we discuss our first solution to Problem~\ref{problem:problem},
in the form of the \emph{projected monotone flow}.  We show that the
system can be viewed in two equivalent ways: either as a control
system with a feedback controller designed using the strategy outlined
in Section~\ref{sec:proj-feedback}, or as a projected dynamical
system.  In fact, this system admits many other equivalent
descriptions, for example in terms of monotone differential
inclusions, or complementarity
systems~\cite{BB-AD-CL-VA:06,WPMHH-JMS-SW:00,JPA-AC:84} \new{(see
  \cite{BB-AT:20} for a modern survey on the relationships between
  systems in various formalisms)}. Its properties have been
extensively studied~\cite{AN-DZ:96} \new{and techniques for
  computational implementation have been considered in
  \cite{VA-BB:08}}.
However, we focus here on the control-based and projection-based
forms.  In the coming sections we describe in detail the derivation of
each implementation, show their equivalence, and discuss the
properties of the resulting flow regarding safety and stability.

\subsection{Control-Based Implementation}\label{sec:pmf-control}

Our design strategy originates from the observation that, when $F$ is
monotone, the system $\dot{x} = -F(x)$ finds solutions to the
unconstrained variational inequality $\VI(F, \real^n)$. However,
trajectories flowing along this dynamics might leave the constraint
set~$\Cc$. This leads us to consider the control-affine
system:
\begin{equation}
  \begin{aligned}
    \label{eq:variational-system}
    \dot{x} &= \Fc(x, u, v) \\
    &= -F(x) - \sum_{i=1}^{m}u_i\nabla g_i(x) -
    \sum_{j=1}^{k}v_j\nabla h_j(x).
  \end{aligned}
\end{equation}
Here, we have augmented the system with inputs from the admissible set
$\Uc = \real^{m}_{\geq 0} \times \real^{k}$ to modify the flow of the
original drift $-F$ to account for the constraints in a way that
ensures that the solutions to \eqref{eq:variational-system} stay
inside of or approach~$\Cc$. The idea is that if the constraint
$g_i(x) \leq 0$ is in danger of being violated, the corresponding
input $u_i$ can be increased to ensure trajectories continue to
satisfy it.  Likewise, the input $v_j$ can be increased or decreased
to ensure the corresponding constraint $h_j(x) = 0$ is satisfied along
trajectories.

Our design proceeds by thinking of $\Cc$ as a safety set for the
system and using the approach outlined in
Section~\ref{sec:proj-feedback} to synthesize a safeguarding feedback
controller $(u, v) = \kappa(x)$.  Assuming that MFCQ holds for all $x \in \Cc$,
the map $K_\text{proj}:\real^n \rightrightarrows \real^{m}_{\geq 0} \times
\real^{k}$ takes the form
\begin{equation}
  \label{eq:Kproj}
  \begin{aligned}
    K_\text{proj}(x)
    =\Big\{ (u, v) \in \real^m_{\geq 0} \times \real^k \;\Big|\;
    &-\pfrac{g_{I_0}}{x}F(x)-\pfrac{g_{I_0}}{x}\pfrac{g}{x}^\top u
      -\pfrac{g_{I_0}}{x}\pfrac{h}{x}^\top v \leq 0,
    \\ 
    &-\pfrac{h}{x}F(x)-\pfrac{h}{x}\pfrac{g}{x}^\top u
      -\pfrac{h}{x}\pfrac{h}{x}^\top v = 0 
      \Big\}.  
  \end{aligned}
\end{equation}

\new{The set $K_\text{proj}(x)$ consists of the inputs such that the 
dynamics \eqref{eq:variational-system} lie in $T_\Cc(x)$}. 
The following result states that the set of admissible controls is
nonempty.  We omit its proof for space reasons, but note that it
readily follows from Farka's Lemma~\cite{RTR:70}.

\begin{lemma}\longthmtitle{Projection onto Tangent
    Cone is Feasible}
  If $x \in \Cc$ and MFCQ holds at $x$, then
  $K_\text{proj}(x) \neq \emptyset$.
\end{lemma}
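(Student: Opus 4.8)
The plan is to reduce the claim to a statement about projecting the drift $-F(x)$ onto the tangent cone and reading off the associated multipliers. The starting observation is that the two families of (in)equalities defining $K_\text{proj}(x)$ in \eqref{eq:Kproj} say exactly that the closed-loop vector field $\Fc(x,u,v) = -F(x) - \pfrac{g}{x}^\top u - \pfrac{h}{x}^\top v$ lies in the linearized tangent cone: premultiplying $\Fc(x,u,v)$ by $\pfrac{g_{I_0}}{x}$ and by $\pfrac{h}{x}$ reproduces precisely the left-hand sides of the inequality and equality constraints, respectively. Since MFCQ holds at $x$, the tangent cone admits the representation $T_\Cc(x) = \{ \xi \mid \pfrac{h(x)}{x}\xi = 0,\ \pfrac{g_{I_0}(x)}{x}\xi \le 0 \}$ used in the excerpt, so membership $\Fc(x,u,v) \in T_\Cc(x)$ is equivalent to $(u,v) \in K_\text{proj}(x)$ once we agree to set $u_i = 0$ for $i \notin I_0(x)$. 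Thus it suffices to exhibit a single pair $(u,v)$ with $u \ge 0$ for which $\Fc(x,u,v) \in T_\Cc(x)$.

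To produce such a pair, I would invoke Moreau's decomposition theorem for the polar pair $(T_\Cc(x), N_\Cc(x))$. Since $\Cc$ is convex, $N_\Cc(x)$ is the polar cone of $T_\Cc(x)$, and every vector---in particular $-F(x)$---decomposes orthogonally as
\[
  -F(x) = \proj{T_\Cc(x)}{-F(x)} + \eta, \qquad \eta \in N_\Cc(x).
\]
Writing $\xi^\star := \proj{T_\Cc(x)}{-F(x)} \in T_\Cc(x)$, we have $\xi^\star = -F(x) - \eta$, and it remains only to express the residual $\eta \in N_\Cc(x)$ through the constraint gradients. This is exactly where MFCQ (equivalently, Farkas' Lemma applied to the polar of the polyhedral linearized cone) enters: it guarantees the representation $N_\Cc(x) = \{ \sum_{i \in I_0(x)} u_i \nabla g_i(x) + \sum_{j=1}^{k} v_j \nabla h_j(x) \mid u_i \ge 0 \}$. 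Taking the $u_i$, $v_j$ furnished by this representation and extending $u$ by zero on the inactive indices gives $\eta = \pfrac{g}{x}^\top u + \pfrac{h}{x}^\top v$ with $u \ge 0$, whence $\xi^\star = \Fc(x,u,v) \in T_\Cc(x)$ and therefore $(u,v) \in K_\text{proj}(x)$.

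The only nontrivial step, and the \emph{main obstacle}, is the normal-cone representation invoked above; this is precisely where the hypotheses are used. MFCQ ensures both that the tangent cone coincides with its linearization and that the dual (normal) cone is generated by the active inequality gradients together with the equality gradients, with the correct sign pattern $u_i \ge 0$ on $i \in I_0(x)$. No closure subtlety arises, since a finitely generated cone is automatically polyhedral, hence closed, so the decomposition and the representation are consistent. A fully self-contained alternative would bypass the projection and argue feasibility of the linear system defining $K_\text{proj}(x)$ directly via a theorem of the alternative: assume infeasibility, extract a separating vector from the Farkas dual, and contradict MFCQ by exhibiting a feasible direction $\xi$ with $\nabla g_i(x)^\top \xi < 0$ for $i \in I_0(x)$ and $\nabla h_j(x)^\top \xi = 0$ for all $j$. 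Either route reduces to the same Farkas-type certificate, which is why the claim follows readily.
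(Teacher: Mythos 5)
Your proof is correct and takes essentially the route the paper intends: the lemma's omitted proof is attributed to Farkas' Lemma, which is exactly the certificate behind your normal-cone representation $N_\Cc(x)=\{\sum_{i\in I_0(x)}u_i\nabla g_i(x)+\sum_{j=1}^{k}v_j\nabla h_j(x) \mid u_i\ge 0\}$. Your Moreau-decomposition argument is, in fact, the same one the paper deploys later in the proof of Proposition~\ref{prop:proj-equivalence}, where the projection residual $w\in N_\Cc(x)$ is expanded via the multiplier representation (citing \cite[Theorem~6.14]{RTR-RJBW:98}) to exhibit an element of $K_\text{proj}(x)$.
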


We then use the feedback controller
\begin{equation}\label{eq:qp-proj-controller-u-v}
  \kappa(x) \in \underset{(u, v) \in K_\text{proj}(x)}{\text{argmin}} 
  J(x, u, v) ,
\end{equation}
where we set the objective function to be 
\begin{equation}
  \label{eq:objective}
  J(x, u, v) = \frac{1}{2}\bigg\|  \sum_{i=1}^{m}u_i\nabla g_i(x) +
  \sum_{j=1}^{k}v_j\nabla h_j(x)  \bigg\|^2. 
\end{equation}
This function measures the magnitude of the ``modification'' of the
drift term in~\eqref{eq:variational-system}.  Thus, 
the quadratic program (QP)~\eqref{eq:qp-proj-controller-u-v} has the interpretation,
at each $x$, of finding the control input such that the closed-loop
system dynamics are as close as possible to~$-F(x)$, while still being
in $T_\Cc(x)$. In general, the program given
by~\eqref{eq:qp-proj-controller-u-v} does not have unique solutions.
Despite this, we show below that the closed-loop dynamics of
\eqref{eq:variational-system} is well-defined regardless of which
solution to \eqref{eq:qp-proj-controller} is chosen. We refer to it as
the \emph{projected monotone flow}. 

\subsection{Projection-Based Implementation}
The second implementation of the projected monotone flow consists of
projecting $-F(x)$ onto the tangent cone of the constraint set.  In
general, the tangent cone does not have an \new{explicit
  parameterization} that allows us to compute the projection
easily. However, when the appropriate constraint qualification
condition holds, \new{the tangent cone can be parameterized by
  affine constraints} which allow the projection to be
implemented as a quadratic program. Let $x \in \Cc$ and suppose that
MFCQ holds at $x$. It follows that the tangent cone can be
parameterized\footnote{\new{
  The right-hand side of~\eqref{eq:parameterization-tc} is called 
  the \emph{linearization cone of~$\Cc$}, and denoted as $T^{\text{lin}}_{\Cc}(x)$. 
  In general, $T^{\text{lin}}_{\Cc}(x) \subset
  T_\Cc(x)$, and we say that \emph{Abadie's Constraint Qualification}
  (ACQ) \cite{DWP:73} holds at $x \in \Cc$ if the reverse inclusion
  holds, i.e., $T^{\text{lin}}_{\Cc}(x) = T_\Cc(x)$.  MFCQ is a stronger
  condition that implies ACQ. Many of the results we state in the
  paper can be generalized to hold under the weaker assumption of ACQ,
  however for simplicity, we avoid a detailed technical discussion of
  constraint qualifications.
}}
as
\begin{align}\label{eq:parameterization-tc}
  T_\Cc(x) = \bigg\{ \xi \in \real^n \;\Big|\; \pfrac{h(x)}{x}\xi = 0,
  \pfrac{g_{I_0}(x)}{x}\xi \leq 0 \bigg\}.  
\end{align}
The projection-based implementation of the projected monotone flow
takes then the following form:
\begin{equation}
  \label{eq:proj-grad}
  \begin{aligned}
    \dot{x} &= \proj{T_\Cc(x)}{-F(x)}
    \\
    &= \underset{\xi \in \real^n}{\text{argmin}} \quad 
    \frac{1}{2}\norm{\xi + F(x)}^2
    \\
    & \quad \; \text{subject to} \quad \pfrac{g_{I_0}(x)}{x}\xi \leq 0,
    \pfrac{h(x)}{x}\xi = 0.
  \end{aligned}
\end{equation}

The projection onto the tangent cone ensures that~$\Cc$ is forward invariant, 
by Nagumo's Theorem~\cite[Theorem~3.1]{FB:99}.

\subsection{Properties of Projected Monotone Flow}

Here, we lay out the properties of the projected monotone flow.  We
begin by establishing the equivalence between the control- and
projection-based implementations. We then discuss existence and
uniqueness of solutions, and finally the stability and safety
properties of the dynamics.

\subsubsection{Equivalence of Control-Based and Projection-Based
  Implementations}
Equivalence follows directly from the properties of the tangent cone,
as we show next.

\begin{proposition}\longthmtitle{Equivalence of Control-Based and
    Projected-Based Implementations}\label{prop:proj-equivalence}
  \new{Assume MFCQ holds at $x \in \Cc$ and let $(u, v)$ be any solution
  to~\eqref{eq:qp-proj-controller-u-v}. 
  Then, $\Fc(x, u, v) = \proj{T_\Cc(x)}{-F(x)}$.}
\end{proposition}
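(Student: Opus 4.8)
The plan is to change variables from the control $(u,v)$ to the resulting closed-loop velocity $\xi = \Fc(x,u,v)$, and to show that under this substitution the control program~\eqref{eq:qp-proj-controller-u-v} becomes exactly the projection program in~\eqref{eq:proj-grad}. First I would record two algebraic identities. Writing $\Fc(x,u,v) = -F(x) - \pfrac{g}{x}^\top u - \pfrac{h}{x}^\top v$, a direct computation shows that $\pfrac{g_{I_0}}{x}\Fc(x,u,v)$ and $\pfrac{h}{x}\Fc(x,u,v)$ are precisely the left-hand sides of the two constraints defining $K_\text{proj}(x)$ in~\eqref{eq:Kproj}. Hence, using the parameterization~\eqref{eq:parameterization-tc} of the tangent cone, $(u,v) \in K_\text{proj}(x)$ if and only if $u \in \real^{m}_{\geq 0}$, $v \in \real^k$, and $\Fc(x,u,v) \in T_\Cc(x)$. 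Moreover, since $\pfrac{g}{x}^\top u + \pfrac{h}{x}^\top v = -F(x) - \Fc(x,u,v)$, the objective satisfies $J(x,u,v) = \tfrac{1}{2}\norm{\Fc(x,u,v) + F(x)}^2$. Thus, along the image $\xi = \Fc(x,u,v)$, the control program minimizes the same quantity $\tfrac{1}{2}\norm{\xi+F(x)}^2$ as the projection program, over feasible points $\xi \in T_\Cc(x)$.

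These identities immediately give one inequality: for every $(u,v) \in K_\text{proj}(x)$, the point $\xi = \Fc(x,u,v)$ lies in $T_\Cc(x)$, so $J(x,u,v) \geq \min_{\xi \in T_\Cc(x)} \tfrac{1}{2}\norm{\xi+F(x)}^2 = \tfrac{1}{2}\norm{\xi^* + F(x)}^2$, where $\xi^* := \proj{T_\Cc(x)}{-F(x)}$ is the unique minimizer, i.e., the projection of $-F(x)$ onto the closed convex cone $T_\Cc(x)$. The crux is the reverse: I would show that this lower bound is attained, namely that $\xi^*$ is itself realizable as $\Fc(x,u,v)$ for some admissible control. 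To this end, I invoke the KKT conditions of the program in~\eqref{eq:proj-grad}. Since it is a convex quadratic program with affine constraints, the KKT conditions are necessary for optimality at $\xi^*$ without further constraint qualification; stationarity yields multipliers $\mu \geq 0$ (indexed by $I_0(x)$) and $\nu \in \real^k$ with $\xi^* + F(x) + \pfrac{g_{I_0}}{x}^\top \mu + \pfrac{h}{x}^\top \nu = 0$. Zero-padding $\mu$ to a vector $u \in \real^{m}_{\geq 0}$ (setting $u_i = 0$ for $i \notin I_0(x)$) and taking $v = \nu$ gives $\Fc(x,u,v) = \xi^*$, and since $\xi^* \in T_\Cc(x)$ the first step certifies $(u,v) \in K_\text{proj}(x)$.

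Combining the two steps, the optimal value of the control program equals $\tfrac{1}{2}\norm{\xi^* + F(x)}^2$. It then remains to conclude for an arbitrary solution $(u,v)$ of~\eqref{eq:qp-proj-controller-u-v}: such a $(u,v)$ attains this optimal value, so $\Fc(x,u,v) \in T_\Cc(x)$ achieves the minimum of $\tfrac{1}{2}\norm{\xi + F(x)}^2$ over $T_\Cc(x)$; by uniqueness of the projection onto the convex set $T_\Cc(x)$, we must have $\Fc(x,u,v) = \xi^*$, as claimed.

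I expect the main subtlety to be that the map $(u,v) \mapsto \Fc(x,u,v)$ is generally not injective; linear dependencies among the active gradients $\{\nabla g_i(x)\}_{i \in I_0(x)} \cup \{\nabla h_j(x)\}_{j=1}^k$ are exactly what make~\eqref{eq:qp-proj-controller-u-v} admit multiple solutions. The argument sidesteps this by never tracking $(u,v)$ directly: it transfers the optimization to the $\xi$-variable, where the feasible region $T_\Cc(x)$ is convex and the projection is unique, and uses the stationarity condition only to certify that $\xi^*$ lies in the image of $K_\text{proj}(x)$ under $\Fc$. This is precisely what makes the closed-loop velocity well defined despite the non-uniqueness of the control, which is the content of the proposition.
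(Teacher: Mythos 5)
Your proposal is correct and follows essentially the same route as the paper's proof: both directions of the inequality between the optimal values of~\eqref{eq:qp-proj-controller-u-v} and~\eqref{eq:proj-grad}, followed by uniqueness of the projection onto the closed convex cone $T_\Cc(x)$ to pin down $\Fc(x,u,v)$. The only (minor) difference is how the projection $\xi^*$ is certified to be realizable by an admissible control: you extract multipliers from the KKT conditions of the affinely constrained QP~\eqref{eq:proj-grad}, for which no constraint qualification is needed, whereas the paper writes $\xi^* + F(x) + w = 0$ with $w \in N_\Cc(x)$ via the Moreau-type decomposition of \cite{BB-AD-CL-VA:06} and then invokes the MFCQ representation of the normal cone from \cite[Theorem~6.14]{RTR-RJBW:98} --- both yield the same certificate $(\bar{u},\bar{v}) \in K_\text{proj}(x)$.
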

\begin{proof}
  Let $(u, v)$ be any solution to~\eqref{eq:qp-proj-controller-u-v} and
  $\xi = \proj{T_\Cc(x)}{-F(x)}$. Then $\Fc(x, u, v) \in T_{\Cc}(x)$, 
  so it follows immediately by optimality of $\xi$ that
  \[
    \norm{\xi + F(x)}^2 \leq \norm{\Fc(x, u, v) + F(x)}^2.
  \]
  Next, because $\xi$ is given by a projection, there exists
  $w \in N_\Cc(x)$ such that $\xi + F(x) + w = 0$, see
  e.g.,~\cite[Corollary 2]{BB-AD-CL-VA:06}.  If MFCQ holds at
  $x \in \Cc$, by \cite[Theorem~6.14]{RTR-RJBW:98}, there exists
  $(\bar{u}, \bar{v})$ such that 
  \[ 
    \begin{aligned}
      &w = \sum_{i=1}^{m}\bar{u}_i\nabla g_i(x) +
      \sum_{j=1}^{k}\bar{v}_j\nabla h_j(x), \qquad \bar{u} \geq 0 , \quad
      \bar{u}^\top g(x) = 0.
    \end{aligned}
  \]
  Combining this expression with the fact that
  $\xi = -F(x) - w \in T_\Cc(x)$ and using the parameterization of the
  tangent cone in~\eqref{eq:parameterization-tc}, we deduce that
  $(\bar{u}, \bar{v}) \in K_\text{proj}(x)$.  By optimality of
  $(u, v)$,
  \begin{align*}
    \norm{\xi + F(x)}^2 &= \bigg\|  \sum_{i=1}^{m}\bar{u}_i\nabla g_i(x) +
      \sum_{j=1}^{k}\bar{v}_j\nabla h_j(x) \bigg\|^2 \\
      &\geq \bigg\|  \sum_{i=1}^{m}u_i\nabla g_i(x) +
      \sum_{j=1}^{k}v_j\nabla h_j(x)  \bigg\|^2 \\
      &= \norm{\Fc(x, u, v) + F(x)}^2.
  \end{align*}
  Thus, $\Fc(x, u, v)$ is a solution to \eqref{eq:proj-grad}. 
  But since, by convexity, the projection onto the
  tangent cone must be unique, $\xi = \Fc(x, u, v)$.  
\end{proof}

The value of Proposition~\ref{prop:proj-equivalence} stems from
showing that safety-critical control can be used to 
design algorithms that solve variational inequalities.  Though the
control strategy pursued in Section~\ref{sec:pmf-control} results in a
known flow, this sets up the basis for employing other design
strategies from safety-critical control to yield novel methods, as we
show later.

\subsubsection{Existence and Uniqueness of Solutions}
The projected monotone flow is discontinuous, and hence one must
consider notions of solutions beyond the classical ones, see
e.g.,~\cite{JC:08-csm-yo}. Here, we consider Carath\'eodory solutions,
which are absolutely continuous functions that
satisfy~\eqref{eq:proj-grad} almost everywhere. The existence and
uniqueness of solutions for all initial conditions follows readily
from~\cite[Chapter 3.2, Theorem~1(i)]{JPA-AC:84} or
\new{\cite[Theorem~1]{BB-AD-CL-VA:06}}.

\subsubsection{Safety and Stability of Projected Monotone Flow}
We now show that the projected monotone flow is safe, meaning that the
constraint set $\Cc$ is forward invariant, and the solution set
$\SOL(F, \Cc)$ is stable.  Forward invariance of $\Cc$ follows
directly from Lemma~\ref{lem:proj-feedback}. The equilibria of the projected
monotone flow correspond to solutions to~$\VI(F, \Cc)$. Finally,
stability of a solution $x^*$ can be certified using the Lyapunov
function 
$ V(x) = \frac{1}{2}\norm{x - x^*}^2$,
as a consequence
of~\cite[Chapter 3.2, Theorem 1(ii)]{JPA-AC:84}
\new{or~\cite[Theorems~1 and~2]{DG-BB:04}}.  These properties are
summarized next.

\begin{theorem}\longthmtitle{Safety and Stability Properties of 
  Projected Monotone Flow}\label{thm:proj-stability}
  Let $\Cc$ be convex and suppose MFCQ holds for all $x \in \Cc$. The
  following hold for the projected monotone flow:
  \begin{enumerate}
  \item $\Cc$ is forward invariant;
  \item $x^*$ is an equilibrium of the projected monotone flow if and
    only if $x^* \in \SOL(F, \Cc)$;
  \item If $x^* \in \SOL(F, \Cc)$ and $F$ is monotone, then $x^*$ is
    globally Lyapunov stable relative to $\Cc$;
  \item If $F$ is $\mu$-strongly monotone, then the projected 
    monotone flow is contracting at rate $\mu$. In particular, 
    the unique solution $x^* \in \SOL(F, \Cc)$ is 
    globally exponentially stable relative to $\Cc$.
  \end{enumerate}
\end{theorem}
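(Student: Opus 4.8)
The plan is to treat the four claims in sequence, using as the common technical device the representation of the projected vector field through the mutually polar tangent and normal cones. Writing $\Gc(x) = \proj{T_\Cc(x)}{-F(x)}$ for the closed-loop field, the starting point is the orthogonal (Moreau) decomposition: for each $x \in \Cc$ there is $w(x) \in N_\Cc(x)$ with $\Gc(x) = -F(x) - w(x)$ and $\Gc(x)^\top w(x) = 0$, exactly as invoked in the proof of Proposition~\ref{prop:proj-equivalence} (cf. \cite[Corollary~2]{BB-AD-CL-VA:06}).

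For (i), by construction $\Gc(x) \in T_\Cc(x)$ for every $x \in \Cc$, so forward invariance is immediate from Nagumo's Theorem \cite[Theorem~3.1]{FB:99}, given the existence and uniqueness of Carath\'eodory solutions already established. For (ii), since $T_\Cc(x^*)$ is a closed convex cone with polar $N_\Cc(x^*)$, the projection $\Gc(x^*)$ vanishes if and only if $-F(x^*) \in N_\Cc(x^*)$; unpacking the definition of the normal cone, this is precisely $F(x^*)^\top(x' - x^*) \geq 0$ for all $x' \in \Cc$, i.e.\ $x^* \in \SOL(F, \Cc)$.

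For (iii), I take $V(x) = \tfrac{1}{2}\norm{x - x^*}^2$ and differentiate along solutions (a.e., since trajectories are absolutely continuous). Using the decomposition, $\Dini_\Gc V(x) = (x - x^*)^\top \Gc(x) = -(x - x^*)^\top F(x) - (x - x^*)^\top w(x)$. The first term is nonpositive: the VI inequality at $x^*$ gives $(x - x^*)^\top F(x^*) \geq 0$, and monotonicity gives $(x - x^*)^\top (F(x) - F(x^*)) \geq 0$, so $(x - x^*)^\top F(x) \geq 0$. The second term is nonpositive because $w(x) \in N_\Cc(x)$ and $x^* \in \Cc$ yield $(x^* - x)^\top w(x) \leq 0$. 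Hence $\Dini_\Gc V \leq 0$ on $\Cc$, giving global Lyapunov stability of $x^*$ relative to $\Cc$ (consistent with \cite[Chapter~3.2, Theorem~1(ii)]{JPA-AC:84}). For (iv), I run the same computation for the squared distance between two solutions $x_1(t), x_2(t)$ in $\Cc$: with $\Gc(x_a) = -F(x_a) - w_a$, $w_a \in N_\Cc(x_a)$, one gets $\tfrac{d}{dt}\tfrac12\norm{x_1 - x_2}^2 = -(x_1 - x_2)^\top(F(x_1) - F(x_2)) - (x_1 - x_2)^\top(w_1 - w_2)$. Strong monotonicity bounds the first term by $-\mu\norm{x_1 - x_2}^2$, while the cross term is nonpositive since $w_1 \in N_\Cc(x_1)$, $w_2 \in N_\Cc(x_2)$, and both $x_1, x_2 \in \Cc$. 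This yields $\tfrac{d}{dt}\norm{x_1 - x_2} \leq -\mu\norm{x_1 - x_2}$, i.e.\ contraction at rate $\mu$; global exponential stability of the unique solution (itself an equilibrium by (ii)) then follows.

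The main obstacle is not the inequalities themselves but the nonsmooth setting surrounding them: one must justify the decomposition $\Gc(x) = -F(x) - w(x)$ with $w(x) \in N_\Cc(x)$ at every feasible $x$, legitimize the almost-everywhere differentiation of $V$ and of $\norm{x_1 - x_2}^2$ along Carath\'eodory solutions of the discontinuous flow, and verify that both states remain in $\Cc$ so that the normal-cone inequalities apply --- the last point relying on the forward invariance from (i). These are exactly the technical subtleties that the cited projected-dynamical-systems results \cite{JPA-AC:84} are tailored to handle.
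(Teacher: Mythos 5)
Your proposal is correct and follows essentially the same route as the paper: Nagumo's Theorem for forward invariance, the normal-cone characterization of equilibria, and the Lyapunov function $V(x)=\tfrac{1}{2}\norm{x-x^*}^2$ together with the standard monotonicity/normal-cone inequalities for stability and contraction. The only difference is that the paper outsources the computations in (iii)--(iv) to the cited results of Aubin--Cellina and Goeleven--Brogliato, whereas you carry out the Moreau-decomposition argument explicitly (and correctly flag the nonsmooth technicalities those references are meant to absorb).
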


\section{Safe Monotone Flow}\label{sec:smf}
In this section, we discuss a second solution to
Problem~\ref{problem:problem}, which results in an entirely novel
flow, termed \emph{safe monotone flow}.  Similar to the projected
monotone flow, this system admits two equivalent implementations:
either as a control-system with a safeguarding feedback controller or
as a projected dynamical system.  \new{Because of its continuity
  properties, the system can be solved using standard ODE
  discretization schemes, cf. \cite[Remark 5.6]{AA-JC:24-tac}, but we
  leave the formal analysis of this for future work.}

\subsection{Control-Based Implementation}
We start with the control system~\eqref{eq:variational-system} with
the admissible control set $\Uc = \real^{m}_{\geq 0} \times \real^k$,
viewing $\Cc$ as a safety set, and design a safeguarding
controller. We synthesize this controller using the function $(g, h)$
as a VCBF, following the approach outlined in
Section~\ref{sec:cbf-feedback}.

Letting $\alpha > 0$ be a parameter, the set of control inputs
ensuring safety is given by
\[
  \begin{aligned}
    K_{\textnormal{cbf}, \alpha}(x) = \Big\{(u, v) \in \real_{\geq 0}^m
      \times \real^k \;\Big|\; &-\pfrac{g}{x}F(x) -\pfrac{g}{x}\pfrac{g}{x}^\top u
      -\pfrac{g}{x}\pfrac{h}{x}^\top 
      v \leq  -\alpha g(x)
    \\
    & -\pfrac{h}{x}F(x) -\pfrac{h}{x}\pfrac{g}{x}^\top u -
      \pfrac{h}{x}\pfrac{h}{x}^\top 
      v = -\alpha h(x) \Big\}.
  \end{aligned}
\]
The next result shows that this set is nonempty on an open set
containing~$\Cc$.

\begin{lemma}[Vector Control Barrier Function
    for \eqref{eq:variational-system}]
  \label{lem:vcbf}
  Assume MFCQ holds for all $x \in \Cc$. Then there exists an open set
  $X \supset \Cc$ on which $\phi=(g, h)$ is a vector-control
  barrier function of \eqref{eq:variational-system} for $\Cc$, on $X$,
  relative to $\real^{m}_{\geq 0} \times \real^k$.
\end{lemma}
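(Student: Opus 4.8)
The plan is to recognize that $K_{\textnormal{cbf},\alpha}(x) \neq \emptyset$ is a \emph{linear} feasibility question in $(u,v)$ and to resolve it with a theorem of the alternative, first on $\Cc$ and then on a neighborhood. Since $h$ is affine, $\pfrac{h}{x}$ is a constant matrix, and MFCQ guarantees it has full row rank, so $\pfrac{h}{x}\pfrac{h}{x}^\top$ is invertible. First I would use the equality constraint defining $K_{\textnormal{cbf},\alpha}(x)$ to solve for $v$ uniquely as an affine function of $u$, thereby eliminating it. Substituting back and introducing the constant orthogonal projector $\Pi = I - \pfrac{h}{x}^\top\big(\pfrac{h}{x}\pfrac{h}{x}^\top\big)^{-1}\pfrac{h}{x}$ onto $\ker\pfrac{h}{x}$, the closed-loop drift becomes $-\Pi\big(F(x) + \pfrac{g}{x}^\top u\big)$ up to an $x$-dependent affine offset, and the inequality constraints collapse to a reduced system $\pfrac{g}{x}\Pi\pfrac{g}{x}^\top u \geq b(x)$, $u \geq 0$, where $b(x)$ is an explicit affine expression in $F(x), g(x), h(x)$. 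This reduction is an exact equivalence because $v$ is pinned by $u$.

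Next I would apply a Gale/Farkas theorem of the alternative: the reduced system is infeasible if and only if there exists $y \geq 0$ with $\pfrac{g}{x}\Pi\pfrac{g}{x}^\top y \leq 0$ and $b(x)^\top y > 0$. Because the Gram matrix $\pfrac{g}{x}\Pi\pfrac{g}{x}^\top$ is positive semidefinite, the first condition forces $\Pi\pfrac{g}{x}^\top y = 0$, i.e. $\sum_i y_i\,\Pi\nabla g_i(x) = 0$. At a point $x \in \Cc$ we have $h(x) = 0$ and $g(x) \leq 0$, so $b(x) = \alpha g(x) - \pfrac{g}{x}\Pi F(x)$ and $b(x)^\top y = \alpha\, g(x)^\top y \leq 0$, with equality only if $y$ is supported on the active set $I_0(x)$. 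Pairing the relation $\sum_{i \in I_0(x)} y_i\,\Pi\nabla g_i(x) = 0$ with the MFCQ direction $\xi$ (which satisfies $\pfrac{h}{x}\xi = 0$, hence $\Pi\xi = \xi$, and $\nabla g_i(x)^\top\xi < 0$ for $i \in I_0(x)$) yields $0 = \sum_{i\in I_0(x)} y_i\,\nabla g_i(x)^\top\xi$, a sum of nonpositive terms, which forces $y = 0$ and contradicts $b(x)^\top y > 0$. Thus no certificate exists and $K_{\textnormal{cbf},\alpha}(x) \neq \emptyset$ for every $x \in \Cc$ and, notably, for \emph{every} $\alpha > 0$.

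The hard part will be upgrading this pointwise statement on $\Cc$ to a single open set $X \supset \Cc$ (which may be unbounded) with a single $\alpha$, since as $x$ leaves $\Cc$ the active set changes and $\Pi\pfrac{g}{x}^\top$ may drop rank, so the infeasibility cone $\{\,y \geq 0 : \Pi\pfrac{g}{x}^\top y = 0\,\}$ need not vary continuously. I would circumvent this with a limiting argument rather than a naive continuity estimate: fixing any $\alpha > 0$, suppose toward a contradiction that no neighborhood of some $\bar{x} \in \Cc$ works; then there are $x_n \to \bar{x}$ admitting normalized certificates $y_n \geq 0$, $\norm{y_n} = 1$. Using that $\Pi$ is constant and $\pfrac{g}{x}$, $F$, $g$, $h$ are continuous, the maps $x \mapsto \pfrac{g}{x}\Pi\pfrac{g}{x}^\top$ and $x \mapsto b(x)$ are continuous, so a convergent subsequence $y_n \to y^* \geq 0$ with $\norm{y^*} = 1$ satisfies $\Pi\pfrac{g}{\bar{x}}^\top y^* = 0$ and $b(\bar{x})^\top y^* \geq 0$, producing a nonzero certificate at $\bar{x} \in \Cc$ and contradicting the previous paragraph. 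Taking $X$ to be the union of the resulting neighborhoods completes the proof; the key structural feature that makes the limiting argument go through is that the reduced system retains all $m$ constraint rows (not merely the active ones) and $\Pi$ is a fixed matrix.
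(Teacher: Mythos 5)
Your proof is correct, and it is worth noting that the paper itself omits the argument, deferring to \cite[Lemma~4.1]{AA-JC:24-tac}; the route taken there (and mirrored by Proposition~\ref{prop:restricted-tangent-properties} and Lemma~\ref{lem:optimality-conditions} in this paper) is different from yours. The paper's strategy works in the state space: MFCQ gives a Slater point for the affine system defining $T^{(\alpha)}_\Cc(x)$, Robinson's regularity theorem then yields nonemptiness of $T^{(\alpha)}_\Cc$ on an open neighborhood of $\Cc$, and the KKT multipliers of the projection of $-F(x)$ onto that set furnish an element of $K_{\textnormal{cbf},\alpha}(x)$. You instead work directly in the multiplier space $(u,v)$: eliminating $v$ via the invertible Gram matrix $\pfrac{h}{x}\pfrac{h}{x}^\top$, reducing to $\pfrac{g}{x}\Pi\pfrac{g}{x}^\top u \geq b(x)$, $u \geq 0$, and killing any Farkas certificate by the positive semidefiniteness of the reduced Gram matrix together with the MFCQ direction; the passage to an open neighborhood is handled by normalizing certificates and extracting a convergent subsequence rather than by citing a stability theorem for linear inequality systems. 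Your version is more elementary and self-contained (no appeal to Robinson's theorem or to parametric programming), and it delivers the slightly stronger conclusion that \emph{every} $\alpha>0$ works; the paper's version is shorter because the regularity machinery does the perturbation analysis for free and reuses infrastructure needed elsewhere (the restricted tangent set). Two small remarks: at a point $x\in\Cc$ the bound $b(x)^\top y = \alpha\, g(x)^\top y \le 0$ already contradicts $b(x)^\top y>0$ without invoking MFCQ, so the MFCQ step is genuinely needed only to exclude the \emph{nonzero boundary certificates} ($b(x)^\top y = 0$) that your limiting argument produces --- you might make that division of labor explicit; and you should note that $\pfrac{h}{x}=H$ having full row rank at one point of $\Cc$ gives it everywhere since $h$ is affine, which is what licenses the elimination of $v$ at the off-$\Cc$ points $x_n$.
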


The proof of this result is identical to \cite[Lemma
4.1]{AA-JC:24-tac} and we omit it for brevity. By
Lemma~\ref{lem:vcbf}, the feedback controller $(u, v) = \kappa(x)$ where
\begin{equation}\label{eq:smf-controller}
  \kappa(x) \in \underset{(u, v) \in K_{\textnormal{cbf}, \alpha}(x)}{\text{argmin}} 
  J(x, u, v),
\end{equation}
and $J$ is given by~\eqref{eq:objective}, is well-defined on $X$. This
controller has the same interpretation as before: determining the
control input belonging to~$K_{\textnormal{cbf}, \alpha}(x)$ such that the
closed-loop system dynamics are as close as possible to~$-F(x)$.
Similar to the case with projection-based methods, the problem
\eqref{eq:qp-cbf-controller} does not necessarily have unique
solutions. However, we show below that the closed-loop system is
well-defined regardless of which solution is chosen. We refer to it as
the \emph{safe monotone flow with safety parameter $\alpha$}, 
denoted~$\dot{x} = \Gc_\alpha(x)$.

\subsection{Projection-Based Implementation}
Here we describe the implementation of the safe monotone flow as a
projected dynamical system. Similar to the projected monotone flow,
the projected system is obtained by projecting $-F(x)$ onto a
set-valued map.  However, because the projection onto the tangent cone
is in general discontinuous as a function of the state, we replace the
tangent cone with the \emph{$\alpha$-restricted tangent set}
\new{(here $\alpha > 0$)}, denoted $T_\Cc^{(\alpha)}$, defined as
\begin{equation}
  \label{eq:approximate-tangent-cone}
  \begin{aligned}
    T^{(\alpha)}_{\Cc}(x) = \Big\{ \xi \in \real^n \;\Big|\;\pfrac{g(x)}{x}\xi \leq -\alpha g(x), \pfrac{h(x)}{x}\xi =
    -\alpha h(x) \Big\} .
  \end{aligned}
\end{equation}
Figure~\ref{fig:approx-tangent} illustrates this definition.  This set
can be interpreted as an approximation of the usual tangent cone, but
differs in several key ways.  First, the restricted tangent set is not
a cone, meaning that vectors in~$T^{(\alpha)}_{\Cc}(x)$ cannot be
scaled arbitrarily: in certain directions, the magnitude of vectors in
$T^{(\alpha)}_{\Cc}(x)$ is restricted. 
\new{Second, unlike the tangent cone,
$T_\Cc^{(\alpha)}:\real^n \rightrightarrows \real^n$ is 
a continuous map.}
\new{Finally, while the tangent cone is empty for 
all~$x \not\in \Cc$}, this is not the case for the restricted tangent
set. In fact, it can be shown that $T_\Cc^{(\alpha)}$ takes nonempty
values on an open set containing~$\Cc$. This property allows for the
safe monotone flow to be well-defined for infeasible initial
conditions.  The next result states properties of the
$\alpha$-restricted tangent set.

\pagebreak

\begin{proposition}\longthmtitle{Properties of $\alpha$-Restricted
    Tangent Set}
  \label{prop:restricted-tangent-properties}
  Assume MFCQ holds for all $x \in \Cc$. The set-valued map
  $T_\Cc^{(\alpha)}:\real^n \rightrightarrows \real^n$ satisfies:
  \begin{enumerate}
  \item \new{$T_\Cc^{(\alpha)}$ is a continuous set-valued map;}
  \item $T_\Cc^{(\alpha)}(x)$ is convex for all $x \in \real^n$;
  \item For all $x \in \Cc$, $T^{(\alpha)}_\Cc(x)$ satisfies MFCQ at
    all $\xi \in T^{(\alpha)}_\Cc(x)$.
  \item There exists an open set $X$ containing $\Cc$ such that
    $T^{(\alpha)}_\Cc(x) \neq \emptyset$ for all $x \in X$;
  \item If $x \in \Cc$, then \new{$T_\Cc^{(\alpha)}(x) \subseteq
    T_\Cc(x)$}.
  \end{enumerate}
\end{proposition}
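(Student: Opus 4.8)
The plan is to dispatch (i) and (iv) directly from the definition, derive (iii) from the vector control barrier function property already established in Lemma~\ref{lem:vcbf}, and devote the real effort to (ii). For (i), I would observe that for each fixed $x$ the defining conditions of $T_\Cc^{(\alpha)}(x)$, namely $\xi \mapsto \nabla g_i(x)^\top \xi + \alpha g_i(x) \le 0$ and $\xi \mapsto \nabla h_j(x)^\top \xi + \alpha h_j(x) = 0$, are affine in $\xi$; hence the set is an intersection of half-spaces and hyperplanes, i.e., a polyhedron, and is therefore convex for every $x \in \real^n$. For (iv), I would fix $x \in \Cc$, so that $h(x) = 0$ and $g_i(x) = 0$ for $i \in I_0(x)$. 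Then the equality constraints of $T_\Cc^{(\alpha)}(x)$ reduce to $\nabla h_j(x)^\top \xi = 0$, and for $i \in I_0(x)$ the inequalities reduce to $\nabla g_i(x)^\top \xi \le 0$; these are exactly the conditions parameterizing $T_\Cc(x)$ in~\eqref{eq:parameterization-tc} (valid since MFCQ holds at $x$). Any $\xi \in T_\Cc^{(\alpha)}(x)$ satisfies them, together with the additional inequalities for $i \notin I_0(x)$, so $T_\Cc^{(\alpha)}(x) \subseteq T_\Cc(x)$.

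For (iii), I would use the correspondence with the controlled dynamics: writing $\Fc(x,u,v) = -F(x) - \sum_i u_i \nabla g_i(x) - \sum_j v_j \nabla h_j(x)$, the conditions defining $K_{\textnormal{cbf}, \alpha}(x)$ are precisely the statement that $\xi = \Fc(x,u,v)$ lies in $T_\Cc^{(\alpha)}(x)$. Hence $K_{\textnormal{cbf}, \alpha}(x) \neq \emptyset$ implies $T_\Cc^{(\alpha)}(x) \neq \emptyset$, and the claim follows immediately from Lemma~\ref{lem:vcbf}, which furnishes the open set $X \supseteq \Cc$ on which $K_{\textnormal{cbf}, \alpha}$ is nonempty-valued. (For $x \in \Cc$ itself nonemptiness is even more transparent, since $\xi = 0$ satisfies every constraint because $-\alpha g_i(x) \ge 0$ and $h(x) = 0$.)

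The crux is (ii), for which I would invoke the standard equivalence between MFCQ and positive linear independence of the active constraint gradients. Fix $x \in \Cc$ and $\xi \in T_\Cc^{(\alpha)}(x)$, and let $\tilde I(\xi) = \{ i \mid \nabla g_i(x)^\top \xi = -\alpha g_i(x) \}$ collect the active inequality indices of $T_\Cc^{(\alpha)}(x)$ at $\xi$. The equality-constraint gradients are $\{\nabla h_j(x)\}_{j=1}^k$, which are independent by MFCQ at $x$. Suppose toward a contradiction that MFCQ fails at $\xi$, so there exist $\lambda \ge 0$ supported on $\tilde I(\xi)$ and $\mu$, not both zero, with $\sum_{i \in \tilde I(\xi)} \lambda_i \nabla g_i(x) + \sum_j \mu_j \nabla h_j(x) = 0$; here $\lambda \neq 0$, since $\lambda = 0$ would contradict independence of the equality gradients. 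Pairing this relation with $\xi$ and using $\nabla g_i(x)^\top \xi = -\alpha g_i(x)$ for $i \in \tilde I(\xi)$ together with $\nabla h_j(x)^\top \xi = -\alpha h_j(x) = 0$ (as $h(x) = 0$), I obtain $\sum_{i \in \tilde I(\xi)} \lambda_i g_i(x) = 0$. Since $x \in \Cc$ gives $g_i(x) \le 0$ while $\lambda_i \ge 0$, each term is nonpositive and hence vanishes, so $\lambda_i > 0$ forces $g_i(x) = 0$, i.e., the support of $\lambda$ lies in $I_0(x)$. The relation then becomes a nontrivial positive linear dependence among $\{\nabla g_i(x)\}_{i \in I_0(x)} \cup \{\nabla h_j(x)\}_j$, contradicting MFCQ at $x$. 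Therefore MFCQ holds at every $\xi \in T_\Cc^{(\alpha)}(x)$.

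The main obstacle is exactly this last step. A priori the active set $\tilde I(\xi)$ of the restricted tangent set can contain indices $i$ with $g_i(x) < 0$ --- constraints inactive for the original problem, about which MFCQ at $x$ asserts nothing --- so one cannot simply reuse the Mangasarian-Fromovitz vector of $\VI(F,\Cc)$. The pairing-with-$\xi$ computation, which exploits the sign pattern $g_i(x) \le 0$, $\lambda_i \ge 0$ in concert with the active-constraint equalities, is what forces the support of any offending multiplier back into $I_0(x)$, at which point MFCQ at $x$ closes the argument.
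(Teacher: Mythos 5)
Your proof is correct, and parts (i) and (iv) coincide with the paper's (affine constraints give convexity; at $x \in \Cc$ the defining conditions of $T^{(\alpha)}_\Cc(x)$ imply those of the parameterization~\eqref{eq:parameterization-tc}). Where you diverge is in (ii) and (iii). For (ii), the paper constructs a Slater point: MFCQ at $x$ supplies a direction $\eta$ with $\nabla g_i(x)^\top \eta < 0$ for $i \in I_0(x)$ and $\nabla h_j(x)^\top \eta = 0$, and $t\eta$ for small $t>0$ is strictly feasible for the affine system defining $T^{(\alpha)}_\Cc(x)$; MFCQ at every feasible $\xi$ then follows by taking the direction from $\xi$ to the Slater point. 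You instead run the dual (Gordan/Motzkin) form of the same fact, showing that any positive linear dependence among the gradients active at $\xi$ must, after pairing with $\xi$ and exploiting the sign pattern $\lambda_i \geq 0$, $g_i(x) \leq 0$, be supported on $I_0(x)$ and hence contradict MFCQ at $x$. Your argument is complete and arguably more explicit than the paper's sketch; the two are dual to each other and buy the same thing. For (iii), the paper stays self-contained: it uses the Slater point from (ii) together with Robinson's stability theorem for linear inequality systems to conclude that feasibility persists under perturbation of $x$, yielding the open set $X$. You instead route through Lemma~\ref{lem:vcbf}, observing that $(u,v) \in K_{\textnormal{cbf},\alpha}(x)$ exactly means $\Fc(x,u,v) \in T^{(\alpha)}_\Cc(x)$. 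This is logically valid given the paper's ordering, but it leans on a lemma whose proof is omitted here by reference to prior work and whose content is essentially equivalent to the claim being proved; the paper's Robinson-based argument avoids that dependence, which matters since the set $X$ produced by this proposition is what Lemma~\ref{lem:optimality-conditions} and Proposition~\ref{prop:smf-equivalent-formulation} subsequently rely on.
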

\begin{proof}
  %
  \new{We observe that (i) follows since
    $T_\Cc^{(\alpha)}:\real^n \rightrightarrows \real^n$ is outer
    semicontinuous by \cite[Example 5.8]{RTR-RJBW:98}, and inner
    semicontinuous by \cite[Theorem 5.9(a)]{RTR-RJBW:98}.  Next, (ii)}
  follows from the fact that the constraints characterizing
  $T_C^{(\alpha)}(x)$ are affine in the variable $\xi$. We prove
  \new{(iii)} using the same strategy as \cite[Lemma
  4.5]{AA-JC:24-tac}, which we sketch here. If MFCQ holds at
  $x \in \Cc$, then the inequalities
  defining~\eqref{eq:approximate-tangent-cone} satisfy Slater's
  condition \cite[Chapter~5.2.3]{SB-LV:09} at $x$ and therefore MFCQ
  holds for all $\xi \in T^{(\alpha)}_\Cc(x)$.  To show \new{(iv)}, we
  note that Slater's condition implies that the affine constraints
  parameterizing $T^{(\alpha)}_\Cc(x)$ are \emph{regular}
  \cite[Theorem~2]{SMR:75}, meaning that the system remains feasible
  with respect to perturbations.  Since
  $T^{(\alpha)}_\Cc(x) \neq \emptyset$ for all $x \in \Cc$, there
  exists an open set $X$ containing $\Cc$ such that
  $T^{(\alpha)}_\Cc(x) \neq \emptyset$ for all $x\in X$.  Finally,
  \new{(v)} follows from the definition of the tangent cone. 
\end{proof}

\begin{figure}[!h]
  \centering
  \subfigure[]{
    \includegraphics[width=0.39\linewidth]{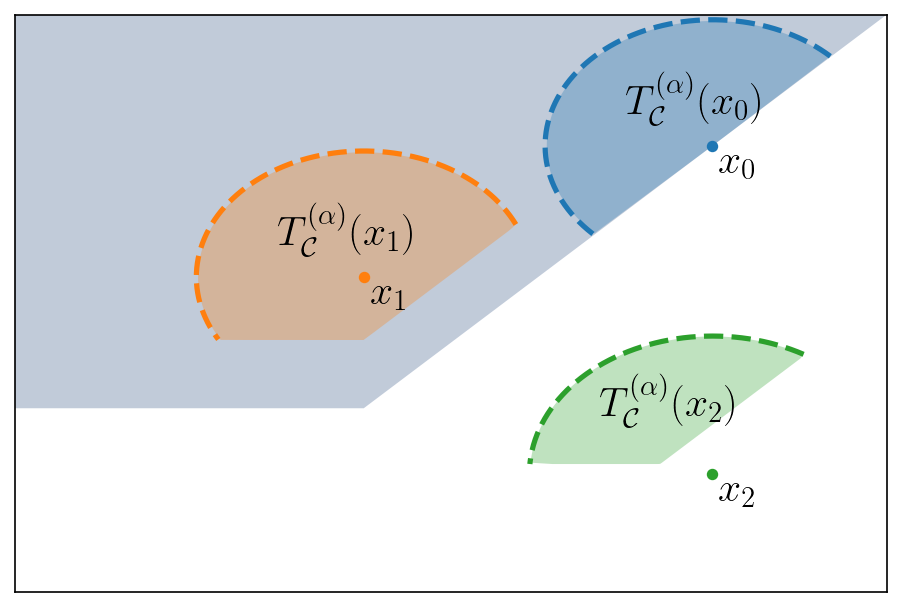}}
  \hspace{6.0ex}
  \subfigure[]{
    \includegraphics[width=0.39\linewidth]{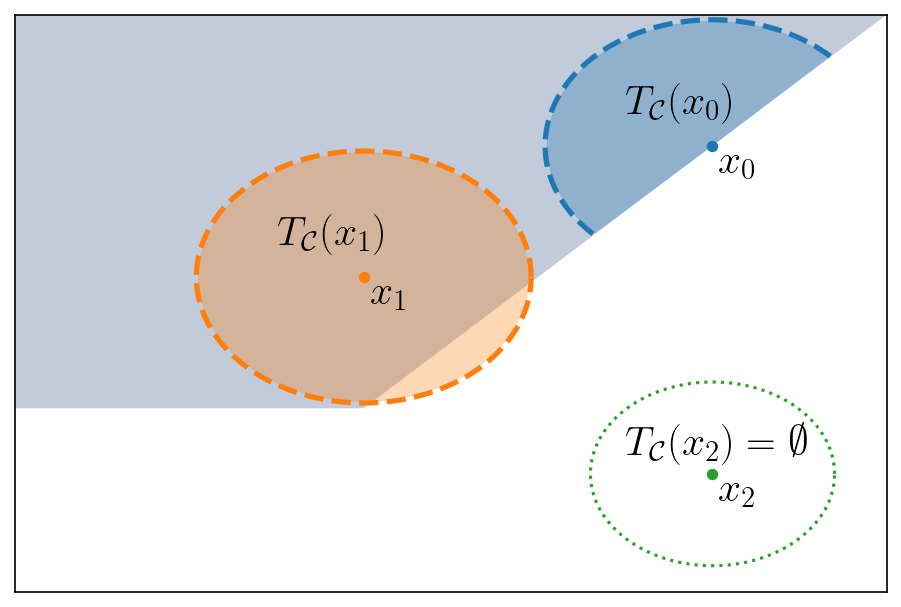}}
  \caption{Illustration of the notion of (a) $\alpha$-restricted
  tangent set and (b) tangent cone. The gray-shaded region
  represents the set~$\Cc$.  The colored regions depict either type
  of set, which consists of vectors centered various points
  $x_i$. The dashed border indicates directions in which the
  magnitude of vectors in the set are unbounded.  In (a), note that
  the $\alpha$-restricted tangent set is nonempty at
  $x_2 \not\in \Cc$, however because the region does not overlap
  with the point $x_2$, the set $T^{(\alpha)}_\Cc(x_2)$ does not
  contain the zero vector, and all vectors point strictly
  toward~$\Cc$.  In (b), note that the tangent cone is \new{empty}
  at points outside $\Cc$. }
  \label{fig:approx-tangent}
\end{figure}

Using the $\alpha$-restricted tangent set, we can define the projected
dynamical system
\begin{equation}
  \label{eq:smooth-proj-grad}
  \begin{aligned} 
    \dot{x}
    = \proj{T^{(\alpha)}_\Cc(x)}{-F(x)}
    & = \underset{\xi \in \real^n}{\text{argmin}} \quad 
      \frac{1}{2}\norm{\xi + F(x)}^2
    \\
    &
      \quad \text{subject to} \quad \pfrac{g(x)}{x}\xi \leq -\alpha g(x)
    \\
    & \quad \phantom{subject to} \quad \pfrac{h(x)}{x}\xi = -\alpha h(x) .
  \end{aligned}
\end{equation}
Similar to the projected monotone flow, the projection operation
ensures that the trajectories of the system remain in the safety
set. However, as we show next, the advantages of projecting onto the
restricted tangent cone is that the system is well-defined for
infeasible initial conditions, and trajectories of the system are
smooth.

\subsection{Properties of Safe Monotone Flow}
We now discuss the properties of the safe monotone flow. We begin by
establishing the equivalence of the control-based and projection-based
implementations. Next, we discuss its stability and safety properties.

\subsubsection{Equivalence of Control-Based and Projection-Based
  Implementations}
We establish here that the control-based and projection-based
implementations of the safe monotone flow are equivalent. The next
result states that the closed-loop dynamics resulting from the
implementation of the controller~\eqref{eq:qp-cbf-controller}
over the system~\eqref{eq:variational-system} is equivalent to the projection
onto $T^{(\alpha)}_\Cc(x)$. The structure of the proof 
mirrors that of Proposition \ref{prop:proj-equivalence}. 

\begin{proposition}\longthmtitle{Equivalence of Control-Based and
    Projection-Based
    Implementations}\label{prop:smf-equivalent-formulation}
  Assume MFCQ holds for all $x \in \Cc$ and let
  $X \subset \real^n$ be an open set containing $\Cc$ on which
  $K_{\textnormal{cbf}, \alpha}$ takes nonempty values. Let $(u, v)$ be any
  solution to \eqref{eq:smf-controller} at $x \in X$ (note that
  $\Gc_\alpha(x) = \Fc(x, u, v)$).  Then, $\Gc_\alpha(x) = \proj{T^{(\alpha)}_\Cc(x)}{-F(x)}$. 
\end{proposition}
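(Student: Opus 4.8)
The plan is to mirror the proof of Proposition~\ref{prop:proj-equivalence} almost verbatim, replacing the tangent cone $T_\Cc(x)$ by the $\alpha$-restricted tangent set $T^{(\alpha)}_\Cc(x)$ and the multiplier characterization of $N_\Cc(x)$ by the corresponding KKT/normal-cone description of the optimal solution of the quadratic program defining $\proj{T^{(\alpha)}_\Cc(x)}{-F(x)}$. The key point is that both the controller QP~\eqref{eq:smf-controller} and the projection QP~\eqref{eq:smooth-proj-grad} have the same feasible geometry once the inputs $(u,v)$ are mapped to the modification vector $w = \sum_i u_i \nabla g_i(x) + \sum_j v_j \nabla h_j(x)$, so I want to show the two programs produce the same closed-loop velocity $\Fc(x,u,v) = -F(x) - w$.

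First I would fix $x \in X$, let $(u,v)$ be any solution to~\eqref{eq:smf-controller}, and set $\xi = \proj{T^{(\alpha)}_\Cc(x)}{-F(x)}$. Since the controller constraints defining $K_{\textnormal{cbf},\alpha}(x)$ are exactly the requirement that $\Fc(x,u,v) = -F(x) - w$ lie in $T^{(\alpha)}_\Cc(x)$, feasibility of $(u,v)$ gives $\Fc(x,u,v) \in T^{(\alpha)}_\Cc(x)$, and optimality of $\xi$ for the projection immediately yields $\norm{\xi + F(x)}^2 \leq \norm{\Fc(x,u,v) + F(x)}^2$. For the reverse inequality I would use the first-order optimality (KKT) conditions of the projection QP~\eqref{eq:smooth-proj-grad}: there exist multipliers $\bar u \geq 0$ and $\bar v$, associated to the inequality constraints $\pfrac{g}{x}\xi \leq -\alpha g(x)$ and equality constraints $\pfrac{h}{x}\xi = -\alpha h(x)$ respectively, such that the gradient condition $\xi + F(x) + \sum_i \bar u_i \nabla g_i(x) + \sum_j \bar v_j \nabla h_j(x) = 0$ holds. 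This exhibits a candidate control $(\bar u, \bar v)$ with $\Fc(x,\bar u, \bar v) = -F(x) - \bar w = \xi$.

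The crucial step is then to verify $(\bar u, \bar v) \in K_{\textnormal{cbf},\alpha}(x)$: since $\Fc(x,\bar u,\bar v) = \xi \in T^{(\alpha)}_\Cc(x)$ by construction and $\bar u \geq 0$ from the KKT sign condition, $(\bar u, \bar v)$ is feasible for the controller QP. Optimality of $(u,v)$ for the cost $J$ in~\eqref{eq:objective}, which is precisely $\tfrac12\norm{w}^2 = \tfrac12\norm{-F(x)-\Fc(x,u,v)}^2$, gives $\norm{\Fc(x,u,v)+F(x)}^2 \leq \norm{\bar w}^2 = \norm{\xi + F(x)}^2$. Chaining the two inequalities forces equality of the objective values, and since the projection onto the convex set $T^{(\alpha)}_\Cc(x)$ (convexity from Proposition~\ref{prop:restricted-tangent-properties}(i)) is unique, I conclude $\Fc(x,u,v) = \xi = \proj{T^{(\alpha)}_\Cc(x)}{-F(x)}$.

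The main obstacle I anticipate is justifying the existence of the projection KKT multipliers $(\bar u, \bar v)$ and hence the validity of the normal-cone/gradient decomposition of $\xi + F(x)$: this requires a constraint qualification for the projection QP, which is supplied by Proposition~\ref{prop:restricted-tangent-properties}(ii) (MFCQ holds at every $\xi \in T^{(\alpha)}_\Cc(x)$), guaranteeing strong duality and the stated first-order conditions. A secondary subtlety, absent from the tangent-cone case, is that the right-hand sides $-\alpha g(x)$ and $-\alpha h(x)$ are inhomogeneous, so $T^{(\alpha)}_\Cc(x)$ is an affine polyhedron rather than a cone; care is needed to confirm the normal-cone representation still applies, but since the argument only uses the KKT conditions of a convex QP under MFCQ, the inhomogeneity does not affect the reasoning.
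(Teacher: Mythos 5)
Your proposal is correct and follows essentially the same route as the paper's proof: the same two-sided optimality comparison, with the reverse inequality obtained by decomposing $\xi + F(x)$ via the multipliers of the projection QP (the paper phrases this as membership of $-\xi - F(x)$ in the normal cone $N_{T^{(\alpha)}_\Cc(x)}(\xi)$, which under the MFCQ guarantee of Proposition~\ref{prop:restricted-tangent-properties} is the same KKT representation you invoke), followed by uniqueness of the projection onto the convex set $T^{(\alpha)}_\Cc(x)$. Your explicit attention to the constraint qualification and to the inhomogeneity of the restricted tangent set addresses exactly the points the paper handles implicitly.
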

\begin{proof}
  Let $(u, v)$ be any solution to~\eqref{eq:smf-controller} and
  $\xi = \proj{T^{(\alpha)}_\Cc(x)}{-F(x)}$. Then
  $\Fc(x, u, v) \in T^{(\alpha)}_{\Cc}(x)$, so it follows immediately
  by optimality of $\xi$ that
  \[ \norm{\xi + F(x)}^2 \leq \norm{\Fc(x, u, v) + F(x)}^2.\]  
  Next,
  since $\xi$ is given by a projection, there exists
  $w \in N_{T}(\xi)$, where $T = T^{(\alpha)}_\Cc(x)$, such that
  $\xi + F(x) + w = 0$, see e.g.,~\cite[Corollary 2]{BB-AD-CL-VA:06},
  and where
  \[ 
  \begin{aligned}
    &w = \sum_{i=1}^{m}\bar{u}_i\nabla g_i(x) +
      \sum_{j=1}^{k}\bar{v}_j\nabla h_j(x), \quad \bar{u} \geq 0, \qquad \bar{u}^\top (\nabla g(x)^\top + \alpha g(x)) = 0.
  \end{aligned}
  \] 
  Combining this expression with the fact that
  $\xi = -F(x) - w \in T^{(\alpha)}_\Cc(x)$ and using the definition
  of the $\alpha$-restricted tangent cone, we deduce that
  $(\bar{u}, \bar{v}) \in K_{\textnormal{cbf}, \alpha}(x)$.  By optimality
  of $(u, v)$, we have
  \begin{align*}
    \norm{\xi + F(x)}^2 &= \bigg\|  \sum_{i=1}^{m}\bar{u}_i\nabla g_i(x) +
      \sum_{j=1}^{k}\bar{v}_j\nabla h_j(x) \bigg\|^2 \\
    &\geq \bigg\| \sum_{i=1}^{m}u_i\nabla g_i(x) +
      \sum_{j=1}^{k}v_j\nabla h_j(x) \bigg\|^2 \\
      &= \norm{\Fc(x, u, v) + F(x)}^2.
  \end{align*}
  Thus, $\Fc(x, u, v)$ is a solution to \eqref{eq:smooth-proj-grad}. But since, by convexity, the projection onto the $\alpha$-restricted tangent set
  is unique, $\xi = \Fc(x, u, v)$.  
\end{proof}

\subsubsection{Existence and Uniqueness of Solutions}
We now discuss conditions for the existence and uniqueness of
solutions of the safe monotone flow.

\begin{proposition}\longthmtitle{Existence and Uniqueness of Solutions to Safe Monotone Flow}
  \label{prop:existence-uniqueness}
  Assume MFCQ and the constant-rank condition hold on $\Cc$ for all
  $x \in \Cc$ and let $X$ be the open set containing $\Cc$ in
  Proposition~\ref{prop:restricted-tangent-properties}(iii).  Then
  \begin{enumerate}
  \item For all $x_0 \in \Cc$, there exists a unique solution
    $x:[0, \infty) \to \real^n$ to the safe monotone flow with
    $x(0) = x_0$.
    \item For all $x_0 \in X$, there exists a unique solution $x:[0, t_f] \to \real^n$ 
    such that $x(0) = x_0$. Furthermore, the solution can be extended so that either $t_f = \infty$ 
    or $x(t) \to \partial X$ as $t \to t_f$. 
  \end{enumerate}
\end{proposition}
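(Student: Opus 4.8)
The plan is to reduce both statements to standard ODE theory once we establish that the vector field $\Gc_\alpha$ is single-valued and locally Lipschitz on $X$. By Proposition~\ref{prop:smf-equivalent-formulation}, $\Gc_\alpha(x) = \proj{T^{(\alpha)}_\Cc(x)}{-F(x)}$. Since the objective $\frac{1}{2}\norm{\xi + F(x)}^2$ is strongly convex in $\xi$, while $T^{(\alpha)}_\Cc(x)$ is nonempty by Proposition~\ref{prop:restricted-tangent-properties}(iii) and convex by Proposition~\ref{prop:restricted-tangent-properties}(i) for every $x \in X$, the minimizer exists and is unique. Hence $\Gc_\alpha:X \to \real^n$ is a well-defined single-valued map.

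The core step, and the main obstacle, is to show that $\Gc_\alpha$ is locally Lipschitz on $X$. I would view $\Gc_\alpha(x)$ as the solution of the parametric quadratic program in~\eqref{eq:smooth-proj-grad}, whose data (the vector $F(x)$, the Jacobians $\pfrac{g}{x}$ and $\pfrac{h}{x}$, and the right-hand sides $-\alpha g(x)$ and $-\alpha h(x)$) depend continuously differentiably, hence locally Lipschitz, on $x$. After the change of variable $\eta = \xi + F(x)$, this becomes the projection of the origin onto a polyhedron whose data vary in a Lipschitz fashion with $x$. Under MFCQ the feasible description is consistent and varies stably (the Robinson-type regularity already invoked in the proof of Proposition~\ref{prop:restricted-tangent-properties}), while the constant-rank condition controls the rank of the active constraint gradients. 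Combined, these yield local Lipschitz continuity of the QP solution in $x$, \emph{even across changes of the active set}. This is precisely the delicate point: solution maps of quadratic programs are in general nonsmooth, and it is the conjunction of MFCQ and the constant-rank condition that excludes the jumps that would otherwise destroy continuity.

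Granting local Lipschitz continuity, the Picard--Lindel\"of theorem yields, for each $x_0 \in X$, a unique maximal solution on a maximal interval $[0, t_f)$, and the standard escape-time theorem for ODEs on the open set $X$ gives the dichotomy that either $t_f = \infty$ or $x(t)$ leaves every compact subset of $X$, i.e., $x(t) \to \partial X$ as $t \to t_f$. This establishes statement~(ii).

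For statement~(i), I would first observe that $\Cc$ is forward invariant: for $x \in \Cc$, Proposition~\ref{prop:restricted-tangent-properties}(iv) gives $\Gc_\alpha(x) \in T^{(\alpha)}_\Cc(x) \subset T_\Cc(x)$, so Nagumo's theorem applies and any solution starting in $\Cc$ remains in $\Cc$ on $[0, t_f)$. To upgrade to $t_f = \infty$, note that for $x \in \Cc$ the vector $\xi = 0$ is feasible for $T^{(\alpha)}_\Cc(x)$, since $g(x) \leq 0$ gives $-\alpha g(x) \geq 0$ and $h(x) = 0$ gives $-\alpha h(x) = 0$. Optimality of the projection then yields $\norm{\Gc_\alpha(x) + F(x)} \leq \norm{F(x)}$, whence $\norm{\Gc_\alpha(x)} \leq 2\norm{F(x)}$ on $\Cc$. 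A trajectory confined to the closed set $\Cc$ cannot approach $\partial X$, so by the escape-time dichotomy $t_f < \infty$ would force $\norm{x(t)} \to \infty$; the speed bound $\norm{\dot{x}} \leq 2\norm{F(x)}$ together with a comparison argument precludes finite-time escape, confining the trajectory to a compact subset $\Cc \cap \overline{B}_R \subset X$ on each finite interval. This contradiction gives $t_f = \infty$, i.e., a complete solution on $\real_{\geq 0}$.
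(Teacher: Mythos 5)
Your overall architecture matches the paper's: reduce everything to showing $\Gc_\alpha$ is single-valued and locally Lipschitz on $X$, apply Picard--Lindel\"of and the escape-time dichotomy for (ii), and use Nagumo plus $\Cc \subset \mathrm{int}(X)$ for (i). The problem is that the step you yourself identify as ``precisely the delicate point'' is where the proof actually lives, and you do not supply it. The assertion that MFCQ (equivalently, Slater/Robinson regularity of the affine system defining $T^{(\alpha)}_\Cc(x)$) together with the constant-rank condition ``yield local Lipschitz continuity of the QP solution in $x$, even across changes of the active set'' is exactly the nontrivial sensitivity theorem being invoked, not a consequence one can read off from the two hypotheses. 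Regularity of the constraint system gives nonemptiness and Lipschitz behavior of the \emph{feasible set} under perturbation; it does not by itself control the \emph{argmin} across active-set changes. The paper closes this gap by verifying the General Strong Second-Order Sufficient Condition (automatic here, since the QP objective has identity Hessian in $\xi$) together with Slater's condition and the constant-rank hypothesis, and then citing Liu's sensitivity result \cite[Theorem~3.6]{JL:95} to conclude $\Gc_\alpha$ is locally Lipschitz. Without naming such a theorem (Liu, or an equivalent result on Lipschitz/piecewise-smooth selections for parametric programs), your argument assumes its conclusion at the decisive moment.

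There is also a flaw in your completeness argument for (i). The bound $\norm{\Gc_\alpha(x)} \leq 2\norm{F(x)}$ on $\Cc$ is correct (since $0 \in T^{(\alpha)}_\Cc(x)$ for $x \in \Cc$), but the claim that ``the speed bound $\norm{\dot{x}} \leq 2\norm{F(x)}$ together with a comparison argument precludes finite-time escape'' is false in general: $F$ is only assumed continuously differentiable, so $\norm{F(x)}$ may grow superlinearly in $\norm{x}$, and the comparison ODE $\dot{r} = 2\sup_{\norm{x}\le r}\norm{F(x)}$ can blow up in finite time. A growth or Lipschitz condition on $F$ (or boundedness of $\Cc$) would be needed to make that argument work. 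The paper sidesteps this by arguing only that trajectories confined to $\Cc$ cannot approach $\partial X$ and invoking the continuation theorem of \cite[Chapter 2, Theorem~1]{JPA-AC:84}; your attempt to strengthen that step introduces an error rather than fixing anything.
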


\begin{proof}
  We first note that the program \eqref{eq:smooth-proj-grad}
  satisfies the General Strong Second-Order Sufficient Condition
  (cf. \cite[pp. 1047]{JL:95}) and Slater's condition at $x \in X$. Because the
  objective function and constraints of \eqref{eq:smooth-proj-grad}
  are twice continuously differentiable, we can apply \cite[Theorem
  3.6]{JL:95} to conclude that $\Gc_\alpha$ is locally Lipschitz at
  $x$. Therefore, $\Gc_\alpha$ is also lower semicontinuous and
  by \cite[Chapter 2, Theorem~1]{JPA-AC:84} there exists for all
  $x_0 \in X$ a solution $x:[0, t_f] \to \real^n$ for some $t_f > 0$ with
  $x(0) = x_0$. Furthermore, either $t_f = \infty$ or
  $x(t) \to \partial X$ as $t \to t_f$. Uniqueness of solutions 
  holds by local Lipschitzness and (ii) follows. 

  To show (i), we note that $\Gc_\alpha(x) \in T_\Cc(x)$, and by
  \cite[Theorem~3.1]{FB:99}, for any solution with $x(0) \in \Cc$, we
  have that $x(t) \in \Cc$ for all $t \geq 0$ on the interval on which
  the solution exists. Since $\Cc \subset \text{int}(X)$, solutions
  beginning in $\Cc$ cannot approach $\partial X$, and exist for all
  time. 
\end{proof}

\subsubsection{Safety  of Safe Monotone Flow}
Here we establish the safety properties of the safe monotone flow. We
begin by characterizing optimality conditions for the closed-loop
dynamics.

\begin{lemma}\longthmtitle{Optimality Conditions for Closed-loop
    Dynamics}\label{lem:optimality-conditions}
  For $x \in \real^n$, consider the equations
  \begin{subequations}
    \label{eq:KKT_proj}
    \begin{align}
      \label{eq:dlagrangian}
      \xi + F(x) + \pfrac{g(x)}{x}^\top u + \pfrac{h(x)}{x}^\top v &=0, 
      \\
    \label{eq:pinequality}
      \pfrac{g(x)}{x}\xi + \alpha g(x) &\leq 0 , \\
      \label{eq:pequality}
      \pfrac{h(x)}{x}\xi + \alpha h(x) &= 0,
      \\
      \label{eq:nonnegativity}
      u &\geq 0, \\
      \label{eq:transversality}
      u^\top \bigg(\pfrac{g(x)}{x}\xi + \alpha g(x)\bigg) &= 0 ,
    \end{align}
  \end{subequations}
  in $(\xi, u, v)$. Let
  $\Lambda_\alpha:\real^n \rightrightarrows \real^m_{\geq 0} \times
  \real^k$ be
  \[
    \Lambda_\alpha(x) = \{ (u, v) \mid \exists \xi \text{ such that }
    (\xi, u, v) \text{ solves~\eqref{eq:KKT_proj}} \}.
  \]
  Assume MFCQ holds for all $x \in \Cc$. Then, there exists an open
  set $X \supset \Cc$ such that, if $x \in X$, then
  $\Lambda_\alpha(x) \neq \emptyset$.  If $(\xi, u, v)$ solves
  \eqref{eq:KKT_proj}, then $\Gc_\alpha(x) = \xi$ and $(u, v)$
  solves~\eqref{eq:smf-controller}.
\end{lemma}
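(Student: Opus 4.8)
The plan is to recognize that the system \eqref{eq:KKT_proj} is precisely the Karush--Kuhn--Tucker (KKT) system associated with the convex quadratic program \eqref{eq:smooth-proj-grad} that defines $\proj{T^{(\alpha)}_\Cc(x)}{-F(x)}$: equation \eqref{eq:dlagrangian} is stationarity of the Lagrangian in the primal variable $\xi$ with multiplier $u$ for the inequality and $v$ for the equality, \eqref{eq:pinequality}--\eqref{eq:pequality} are primal feasibility, \eqref{eq:nonnegativity} is dual feasibility, and \eqref{eq:transversality} is complementary slackness. Since the objective $\tfrac{1}{2}\norm{\xi + F(x)}^2$ is strictly convex and coercive in $\xi$ and the constraints are affine in $\xi$, this is a linearly constrained strictly convex program, for which the KKT conditions are both necessary and sufficient for global optimality, with sufficiency requiring no constraint qualification. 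This equivalence between solvability of \eqref{eq:KKT_proj} and optimality in \eqref{eq:smooth-proj-grad} is the engine for both claims.

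For the nonemptiness claim, I would take $X$ to be the open set from Proposition~\ref{prop:restricted-tangent-properties}(iii), on which $T^{(\alpha)}_\Cc(x) \neq \emptyset$, i.e., the program \eqref{eq:smooth-proj-grad} is feasible. Feasibility, together with strict convexity and coercivity of the objective, yields a unique minimizer $\xi^* = \Gc_\alpha(x)$ for every $x \in X$. Because the constraints parameterizing $T^{(\alpha)}_\Cc(x)$ are affine in $\xi$, Abadie's constraint qualification holds automatically at $\xi^*$, so there exist multipliers $(u, v)$ with $(\xi^*, u, v)$ solving \eqref{eq:KKT_proj}; hence $\Lambda_\alpha(x) \neq \emptyset$. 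Alternatively, one may invoke the Slater/regularity property used in the proof of Proposition~\ref{prop:restricted-tangent-properties} to produce the multipliers.

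For the second claim, suppose $(\xi, u, v)$ solves \eqref{eq:KKT_proj}. By KKT sufficiency for the convex program, $\xi$ is the unique minimizer of \eqref{eq:smooth-proj-grad}, so $\xi = \proj{T^{(\alpha)}_\Cc(x)}{-F(x)} = \Gc_\alpha(x)$, the last equality by Proposition~\ref{prop:smf-equivalent-formulation}. It remains to show that $(u, v)$ solves \eqref{eq:smf-controller}. First, \eqref{eq:dlagrangian} rewrites as $\Fc(x, u, v) = -F(x) - \pfrac{g(x)}{x}^\top u - \pfrac{h(x)}{x}^\top v = \xi$; left-multiplying this identity by $\pfrac{g(x)}{x}$ and by $\pfrac{h(x)}{x}$ and using \eqref{eq:pinequality}--\eqref{eq:pequality} together with $u \geq 0$ shows that $(u, v)$ satisfies every defining constraint of $K_{\textnormal{cbf}, \alpha}(x)$, so $(u, v) \in K_{\textnormal{cbf}, \alpha}(x)$. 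Second, the same identity gives $J(x, u, v) = \tfrac{1}{2}\norm{\xi + F(x)}^2$ with $J$ as in \eqref{eq:objective}. For any competitor $(u', v') \in K_{\textnormal{cbf}, \alpha}(x)$, the closed-loop vector $\Fc(x, u', v')$ lies in $T^{(\alpha)}_\Cc(x)$ (this is exactly how $K_{\textnormal{cbf}, \alpha}$ is built) and satisfies $J(x, u', v') = \tfrac{1}{2}\norm{\Fc(x, u', v') + F(x)}^2$; since $\xi$ minimizes $\tfrac{1}{2}\norm{\cdot + F(x)}^2$ over $T^{(\alpha)}_\Cc(x)$, we obtain $J(x, u, v) \leq J(x, u', v')$, whence $(u, v)$ solves \eqref{eq:smf-controller}.

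The main obstacle is establishing nonemptiness over the whole neighborhood $X$ rather than just on $\Cc$: Proposition~\ref{prop:restricted-tangent-properties}(ii) supplies a constraint qualification for the restricted tangent set only at points of $\Cc$, so producing multipliers for $x \in X \setminus \Cc$ must rely either on the automatic constraint qualification for affine-constrained convex programs or on a continuity/regularity extension of the qualification off $\Cc$; the former is the cleaner route and is the one I would use. The remaining steps are bookkeeping with the linear-algebraic identities relating $\Fc(x, u, v)$, $\xi$, and the constraint Jacobians.
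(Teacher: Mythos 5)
Your proof is correct and rests on the same core identification as the paper's: the system \eqref{eq:KKT_proj} is the KKT system of the strictly convex program \eqref{eq:smooth-proj-grad}, so its solutions are exactly the (unique) projection $\Gc_\alpha(x)$ together with its multipliers. The paper phrases this through the variational inequality $\VI(\tilde{F}(x,\cdot), T^{(\alpha)}_\Cc(x))$ with $\tilde{F}(x,\xi) = F(x) + \xi$, invoking MFCQ of the restricted tangent set (Proposition~\ref{prop:restricted-tangent-properties}) for necessity of the KKT conditions and strong monotonicity of $\tilde{F}$ in $\xi$ for uniqueness; you instead invoke the automatic constraint qualification for affine constraints to obtain multipliers. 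Your route is slightly cleaner on one point the paper glosses over: Proposition~\ref{prop:restricted-tangent-properties}(ii) only asserts the constraint qualification for $T^{(\alpha)}_\Cc(x)$ at points $x \in \Cc$, whereas nonemptiness of $\Lambda_\alpha(x)$ is needed on the larger open set $X$; the linearity-in-$\xi$ argument covers $X \setminus \Cc$ without further work. Your explicit verification that $(u,v) \in K_{\textnormal{cbf},\alpha}(x)$ and that it minimizes $J$ over that set essentially re-derives the equivalence already established in Proposition~\ref{prop:smf-equivalent-formulation}, which the paper leaves implicit; either way the conclusion is sound.
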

\begin{proof}
  Let $\tilde{F}(x, \xi) = F(x) + \xi$. Then
  $\xi = \proj{T^{(\alpha)}_\Cc(x)}{-F(x)}$
  is a solution to the
  monotone variational inequality
  $\VI(\tilde{F}(x, \cdot), T^{(\alpha)}_\Cc(x))$, parameterized by
  $x$. Since MFCQ holds at all $\xi \in T^{(\alpha)}_\Cc(x)$ by
  Proposition \ref{prop:restricted-tangent-properties}(iii), we can
  use the KKT conditions to characterize $\Gc_\alpha(x)$, which
  correspond to \eqref{eq:KKT_proj}. Further, by Proposition
  \ref{prop:restricted-tangent-properties}(iv), solutions to
  \eqref{eq:KKT_proj} exist on an open set $X$ containing $\Cc$. Since
  $\tilde{F}$ is strongly monotone with respect to $\xi$, the solution
  to $\VI(\tilde{F}(x, \cdot), T^{(\alpha)}_\Cc(x))$ is unique,
  proving the result.  
\end{proof}

We rely on the optimality conditions in
Lemma~\ref{lem:optimality-conditions} to establish the following
result characterizing the equilibria and safety properties of the 
safe monotone flow.

\begin{theorem}\longthmtitle{Equilibria and Safety of Safe Monotone
    Flow}\label{thm:safe-stability}
  Let $\alpha > 0$, $\Cc$ be convex, and suppose MFCQ and the constant
  rank condition hold for all $x \in \Cc$.  The following hold for
  the safe monotone flow:
  \begin{enumerate}
  \item $\Cc$ is forward invariant and asymptotically stable on $X$;
    \label{item:safety}
  \item $x^*$ is an equilibrium if and only if $x^* \in \SOL(F, \Cc)$;
    \label{item:eq}
  \end{enumerate}
\end{theorem}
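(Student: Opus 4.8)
The plan is to handle the two items separately, using the projection characterization $\Gc_\alpha(x) = \proj{T^{(\alpha)}_\Cc(x)}{-F(x)}$ from Proposition~\ref{prop:smf-equivalent-formulation} together with the KKT description of this projection established in Lemma~\ref{lem:optimality-conditions}. Throughout I would keep in force the standing hypotheses (MFCQ and the constant-rank condition on $\Cc$), so that Proposition~\ref{prop:existence-uniqueness} supplies unique solutions on the open set $X$.

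For the forward-invariance half of item~\ref{item:safety}, I would observe that for $x \in \Cc$ the projection lands in its feasible set, so $\Gc_\alpha(x) \in T^{(\alpha)}_\Cc(x) \subseteq T_\Cc(x)$ by Proposition~\ref{prop:restricted-tangent-properties}(iv); since solutions are unique, Nagumo's Theorem then renders $\Cc$ forward invariant (this is exactly Proposition~\ref{prop:existence-uniqueness}(i)). For asymptotic stability on $X$, the cleanest route invokes Lemma~\ref{lem:safe-feedback}: by Lemma~\ref{lem:vcbf} the pair $(g,h)$ is a VCBF for \eqref{eq:variational-system} on $X$, any solution of \eqref{eq:smf-controller} lies in $K_{\textnormal{cbf}, \alpha}(x)$, and solutions are unique, so the lemma's hypotheses hold. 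Equivalently, I would give a direct Lyapunov argument with $W(x) = \tfrac{1}{2}\sum_{i} [g_i(x)]_+^2 + \tfrac{1}{2}\sum_{j} h_j(x)^2$, which is $C^1$ and vanishes exactly on $\Cc$: the defining constraints of $T^{(\alpha)}_\Cc(x)$ in \eqref{eq:approximate-tangent-cone} force $\nabla g_i(x)^\top \Gc_\alpha(x) \le -\alpha g_i(x)$ and $\nabla h_j(x)^\top \Gc_\alpha(x) = -\alpha h_j(x)$, whence $\Dini_{\Gc_\alpha} W(x) \le -2\alpha W(x)$, certifying exponential attraction of $\Cc$.

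For item~\ref{item:eq}, I would reduce everything to the KKT system \eqref{eq:KKT_proj} of Lemma~\ref{lem:optimality-conditions}. A point $x^*$ is an equilibrium iff $\Gc_\alpha(x^*) = 0$, i.e.\ iff $\xi = 0$ solves \eqref{eq:KKT_proj} for some $(u,v)$. Substituting $\xi = 0$ collapses \eqref{eq:dlagrangian}--\eqref{eq:transversality} precisely to the KKT conditions \eqref{eq:KKT} for $\VI(F,\Cc)$: \eqref{eq:dlagrangian} becomes the stationarity equation \eqref{eq:KKT1}, \eqref{eq:pinequality} and \eqref{eq:pequality} become $g(x^*)\le 0$ and $h(x^*)=0$, and \eqref{eq:transversality} becomes complementary slackness $(u^*)^\top g(x^*) = 0$. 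Thus $x^*$ is an equilibrium iff it is a KKT point. Since $F$ is monotone, $\Cc$ is convex, and MFCQ holds, the KKT conditions are necessary and sufficient for $x^* \in \SOL(F,\Cc)$ (cf.\ Section~\ref{sec:var-ineqs}), giving the equivalence in both directions; the reverse direction just checks that any KKT triple $(x^*,u^*,v^*)$ makes $(0,u^*,v^*)$ feasible and optimal for \eqref{eq:smooth-proj-grad}.

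The step I expect to be the main obstacle is the global reach of asymptotic stability in item~\ref{item:safety}: the estimate $\Dini_{\Gc_\alpha} W \le -2\alpha W$ only certifies decay on the maximal interval of existence, so one must exclude trajectories escaping through $\partial X$ before reaching $\Cc$. This is where Proposition~\ref{prop:existence-uniqueness}(ii) is indispensable---one argues that the componentwise contraction $g_i(x(t)) \le \max\{g_i(x_0),0\}\,e^{-\alpha t}$ and $h_j(x(t)) = h_j(x_0)\,e^{-\alpha t}$ confines the trajectory to a sublevel set of $W$ lying inside $X$, forcing $t_f = \infty$ and genuine convergence. Once Lemmas~\ref{lem:optimality-conditions} and~\ref{lem:safe-feedback} are in hand, the remaining manipulations are routine bookkeeping.
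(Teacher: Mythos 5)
Your proposal follows essentially the same route as the paper: item (i) is obtained by combining Proposition~\ref{prop:smf-equivalent-formulation}, the VCBF property from Lemma~\ref{lem:vcbf}, existence and uniqueness from Proposition~\ref{prop:existence-uniqueness}, and Lemma~\ref{lem:safe-feedback}, while item (ii) reduces to observing that setting $\xi=0$ in the system \eqref{eq:KKT_proj} of Lemma~\ref{lem:optimality-conditions} recovers the KKT conditions \eqref{eq:KKT}, which characterize $\SOL(F,\Cc)$ under monotonicity and MFCQ. Your supplementary Lyapunov argument with $W(x)=\tfrac{1}{2}\sum_i[g_i(x)]_+^2+\tfrac{1}{2}\sum_j h_j(x)^2$ and the attention to possible escape through $\partial X$ go beyond what the paper writes (it simply invokes Lemma~\ref{lem:safe-feedback} at that point), but they are consistent with its argument.
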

\begin{proof}
  To show (i), note that by
  Proposition~\ref{prop:smf-equivalent-formulation}, for all $x \in X$
  there exists $(u(x), v(x)) \in K_{\textnormal{cbf}, \alpha}(x)$ such that
  $\Gc_\alpha(x) = \Fc(x, u(x), v(x))$.  Given the existence and
  uniqueness of solutions of the closed-loop system,
  cf. Propositions~\ref{prop:existence-uniqueness}, the
  result follows from Lemma~\ref{lem:safe-feedback} since
  $\phi(x) = (g(x), h(x))$ is a VCBF.
  Statement~(ii) follows from the observation that, if
  $\Gc_\alpha(x^*) = 0$, by Lemma~\ref{lem:optimality-conditions},
  there exists $(u^*, v^*)$ such that $(0, u^*, v^*)$ solves
  \eqref{eq:KKT_proj}, which holds if and only if $(x^*, u^*, v^*)$
  solves~\eqref{eq:KKT}.  
\end{proof}

\subsubsection{Stability of Safe Monotone Flow}

Here we characterize the stability properties of the safe monotone
flow. We begin by establishing conditions for stability relative
to~$\Cc$.

\begin{theorem}\longthmtitle{Stability of Safe Monotone Flow Relative
    to $\Cc$}
  \label{theorem:C-stability}
  Assume MFCQ \new{and the constant rank condition} hold for all $x \in \Cc$. Then
  \begin{enumerate}
    \item If $x^* \in \SOL(F, \Cc)$ and $F$ is monotone, then $x^*$ is
    globally Lyapunov stable relative to $\Cc$;
    \label{item:lyap}
    \item If $x^* \in \SOL(F, \Cc)$ and $F$ is $\mu$-strongly monotone,
    then $x^*$ is globally asymptotically stable relative to~$\Cc$.
    \label{item:as}
  \end{enumerate}
\end{theorem}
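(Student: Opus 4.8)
The plan is to certify stability relative to $\Cc$ with the quadratic candidate $V(x) = \frac{1}{2}\norm{x - x^*}^2$, mirroring the treatment of the projected monotone flow in Theorem~\ref{thm:proj-stability}, and to compute its derivative along $\dot{x} = \Gc_\alpha(x)$ using the optimality conditions of Lemma~\ref{lem:optimality-conditions}. Since every trajectory with $x(0) \in \Cc$ stays in $\Cc$ by Theorem~\ref{thm:safe-stability} (forward invariance), I would restrict attention to $x \in \Cc$ and write, via \eqref{eq:dlagrangian}, $\Gc_\alpha(x) = \xi = -F(x) - \pfrac{g(x)}{x}^\top u - \pfrac{h(x)}{x}^\top v$ for a multiplier $(u,v)$ solving \eqref{eq:KKT_proj}. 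Because $V$ is differentiable, substitution gives
\[
  \Dini_{\Gc_\alpha} V(x) = -(x - x^*)^\top F(x) - \sum_{i=1}^{m} u_i \nabla g_i(x)^\top (x - x^*) - v^\top \pfrac{h(x)}{x}(x - x^*).
\]
As $h$ is affine and $x, x^* \in \Cc$, we have $\pfrac{h(x)}{x}(x - x^*) = h(x) - h(x^*) = 0$, so the equality-multiplier term vanishes.

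The first term is controlled by monotonicity together with the defining inequality of $x^* \in \SOL(F,\Cc)$: $-(x-x^*)^\top F(x) \le -(x-x^*)^\top F(x^*) \le 0$ by \eqref{eq:variational-inequality}, and under $\mu$-strong monotonicity this sharpens to $-(x-x^*)^\top F(x) \le -\mu\norm{x-x^*}^2$. For the inequality-constraint term I would exploit convexity of the $g_i$ (so that $\nabla g_i(x)^\top(x-x^*) \ge g_i(x) - g_i(x^*)$) together with the complementary-slackness relation \eqref{eq:transversality}, $u_i\big(\nabla g_i(x)^\top \xi + \alpha g_i(x)\big) = 0$. A useful structural fact is that, by convexity, $\alpha(x^* - x) \in T^{(\alpha)}_\Cc(x)$ for every $x \in \Cc$, since each constraint in \eqref{eq:approximate-tangent-cone} then reads $g_i(x) + \nabla g_i(x)^\top(x^*-x) \le g_i(x^*) \le 0$; this makes $\alpha(x^*-x)$ an admissible test direction in the projection characterization of $\Gc_\alpha(x)$ from Proposition~\ref{prop:restricted-tangent-properties}.

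The main obstacle is precisely this inequality-multiplier term. For the projected monotone flow the drift obeys $\Gc(x) + F(x) \in -N_\Cc(x)$, so the term is immediately signed using $x^* \in \Cc$; the safe monotone flow only satisfies $\xi + F(x) \in -N_{T^{(\alpha)}_\Cc(x)}(\xi)$, and the $\alpha$-restricted tangent set permits strictly positive multipliers $u_i$ on \emph{inactive} constraints ($g_i(x) < 0$) — the deliberate ``filling up'' toward the boundary built into the CBF design. Such contributions enter $\Dini_{\Gc_\alpha}V$ with an a priori indefinite sign, so the naive estimate that succeeds for the projected flow need not close here; the delicate step is to re-express $-u^\top g(x)$ through $\xi$ via \eqref{eq:transversality} and balance it against the nonpositive first term (possibly after augmenting $V$ with a constraint-dependent correction). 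For part~(ii), the $-\mu\norm{x-x^*}^2$ margin from strong monotonicity furnishes room to absorb these residual terms and obtain a strict decrease; global asymptotic stability then follows via LaSalle's invariance principle on the forward-invariant set $\Cc$, using that $\SOL(F,\Cc)=\{x^*\}$ in the strongly monotone case. For part~(i) only a non-strict estimate is available, which yields Lyapunov (but not asymptotic) stability, consistent with a possibly non-singleton solution set.
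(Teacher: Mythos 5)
You correctly set up the problem and, to your credit, you put your finger on exactly the right obstruction: under the safe monotone flow the complementarity condition \eqref{eq:transversality} is $u_i\big(\nabla g_i(x)^\top\xi + \alpha g_i(x)\big)=0$ rather than $u_i\,g_i(x)=0$, so strictly positive multipliers can appear on inactive constraints and the term $-u^\top\pfrac{g(x)}{x}(x-x^*)\le -u^\top(g(x)-g(x^*))$ contains the contribution $-u^\top g(x)\ge 0$, which has the wrong sign. But you stop there: you say the delicate step is to ``balance'' this term against the monotonicity term, ``possibly after augmenting $V$ with a constraint-dependent correction,'' without constructing the correction or closing the estimate. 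That construction is the actual content of the paper's proof, so what you have is a correct diagnosis with the main idea missing. The paper's resolution is to take $V(x)=\tilde V(x)-\frac{1}{\alpha^2}W(x)$ with $W(x)=\inf_{\xi\in T^{(\alpha)}_\Cc(x)}\{\xi^\top F(x)+\frac12\norm{\xi}^2\}$, the optimal value of the projection program. Lemma~\ref{lem:W-properties} gives the closed form $W(x)=-\frac12\norm{\Gc_\alpha(x)}^2+\alpha u^\top g(x)+\alpha v^\top h(x)$ (which also shows $W\le 0$ on $\Cc$, so $V$ is positive definite) and, via a Danskin-type sensitivity result for the parametric Lagrangian, the bound $\Dini_{\Gc_\alpha}W(x)\ge\norm{\Gc_\alpha(x)}^2_{Q(x,u)}-\alpha^2 u^\top g(x)-\alpha^2 v^\top h(x)$. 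The term $-\frac{1}{\alpha^2}\Dini_{\Gc_\alpha}W(x)\le -\frac{1}{\alpha^2}\norm{\Gc_\alpha(x)}^2_{Q(x,u)}+u^\top g(x)+v^\top h(x)$ then cancels the offending $-u^\top g(x)-v^\top h(x)$ exactly, leaving only signed terms (using convexity of $g$ and monotonicity of $F$ to get $Q(x,u)\succeq 0$).

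Two further points. First, your claim that in the strongly monotone case the margin $-\mu\norm{x-x^*}^2$ ``furnishes room to absorb these residual terms'' is unsubstantiated: $-u^\top g(x)$ is not controlled by $\norm{x-x^*}^2$ in any obvious way (the multipliers produced by the CBF controller on inactive constraints need not vanish near $x^*$ along the boundary, and nothing in your argument bounds their size), so without the $W$-correction the strict estimate does not follow. Second, your observation that $\alpha(x^*-x)\in T^{(\alpha)}_\Cc(x)$ for $x\in\Cc$ is correct and is a reasonable alternative test direction, but you do not use it to derive any inequality; if you wanted to pursue a route independent of the paper's, that variational inequality $\big(\alpha(x^*-x)-\Gc_\alpha(x)\big)^\top\big(\Gc_\alpha(x)+F(x)\big)\ge 0$ is where you would start, and you would still need to show the resulting cross terms close, which your proposal does not do.
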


Before proving Theorem~\ref{theorem:C-stability}, we provide several
intermediate results. Our strategy relies on fixing
$x^* \in \SOL(F, \Cc)$ and considering the candidate Lyapunov function
\begin{align}\label{eq:C-lyap-candidate}
  V(x) \!=\!
  \underbrace{\frac{1}{2}\norm{x - x^*}^2}_{\tilde{V}(x)} \!-
  \frac{1}{\alpha^2}
  \!   \underbrace{\inf_{\xi \in T^{(\alpha)}_\Cc(x)} \! \{ \xi ^\top
  F(x) \! + \!
  \frac{1}{2}\norm{\xi}^2 \!\}}_{W(x)} \!.
\end{align}
We first present bounds on the Dini derivative of $\tilde{V}$
along~$\Gc_\alpha$. The result is proven in Appendix \ref{ap:V1}.

\begin{lemma}[Dini Derivative of $\tilde{V}$]
  \label{lem:V1}
  Assume MFCQ holds for all $x \in \Cc$. For $x^* \in \SOL(F, \Cc)$,
  let $(u^*, v^*)$ be Lagrange multipliers corresponding to $x^*$.
  For $x \in X$ and $(u, v) \in \Lambda_\alpha(x)$, then
  \begin{align*}
    \Dini_{\Gc_\alpha}\tilde{V}(x) \leq -\mu \norm{x - x^*}^2-(u - u^*)^\top (g(x) - g(x^*)) - (v - v^*)^\top h(x),
  \end{align*}
  if $F$ is $\mu$-strongly monotone (the inequality holds with $\mu=0$ if
  F is monotone instead).
\end{lemma}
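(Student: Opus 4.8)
The plan is to exploit the smoothness of $\tilde{V}(x) = \frac{1}{2}\norm{x - x^*}^2$, so that by the identity recorded in the preliminaries $\Dini_{\Gc_\alpha}\tilde{V}(x) = (x - x^*)^\top \Gc_\alpha(x)$, and then to substitute the stationarity condition that characterizes $\Gc_\alpha(x)$. Concretely, by Lemma~\ref{lem:optimality-conditions} we have $\Gc_\alpha(x) = \xi$, where $(\xi, u, v)$ solves~\eqref{eq:KKT_proj}; in particular~\eqref{eq:dlagrangian} gives $\xi = -F(x) - \pfrac{g(x)}{x}^\top u - \pfrac{h(x)}{x}^\top v$. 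Substituting and using $(x-x^*)^\top \pfrac{g(x)}{x}^\top u = u^\top \pfrac{g(x)}{x}(x-x^*)$ (and similarly for $h$), I would write
\[
  \Dini_{\Gc_\alpha}\tilde{V}(x) = -(x - x^*)^\top F(x) - u^\top \pfrac{g(x)}{x}(x - x^*) - v^\top \pfrac{h(x)}{x}(x - x^*),
\]
and then bound each of the three terms separately.

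For the first term I would apply $\mu$-strong monotonicity, $(x - x^*)^\top(F(x) - F(x^*)) \geq \mu\norm{x-x^*}^2$, and eliminate $F(x^*)$ using the KKT condition~\eqref{eq:KKT1} at $x^*$, namely $F(x^*) = -\pfrac{g(x^*)}{x}^\top u^* - \pfrac{h(x^*)}{x}^\top v^*$; this reduces the first term to $-\mu\norm{x-x^*}^2$ plus gradient contributions evaluated at $x^*$. The remaining ingredient is the pair of first-order inequalities from convexity of each $g_i$ (valid since $\Cc$ is convex, and holding globally, hence also for $x \in X \setminus \Cc$) and from affineness of $h$. For the gradients at $x$, convexity gives $\nabla g_i(x)^\top(x - x^*) \geq g_i(x) - g_i(x^*)$, and since $u \geq 0$ by~\eqref{eq:nonnegativity} this turns $-u^\top \pfrac{g(x)}{x}(x-x^*)$ into the upper bound $-u^\top(g(x) - g(x^*))$; symmetrically, $\nabla g_i(x^*)^\top(x-x^*) \leq g_i(x) - g_i(x^*)$ together with $u^* \geq 0$ controls the $u^*$ contribution. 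Affineness of $h$ yields the exact identity $\pfrac{h(x)}{x}(x - x^*) = h(x) - h(x^*) = h(x)$ (using $h(x^*) = 0$), which handles both the $v$ and $v^*$ terms without any inequality.

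Collecting the three bounds, the $u$ and $u^*$ terms combine into $-(u - u^*)^\top(g(x) - g(x^*))$ and the $v, v^*$ terms into $-(v - v^*)^\top h(x)$, which is exactly the claimed inequality; taking $\mu = 0$ recovers the merely monotone case. The step requiring the most care is the bookkeeping of signs in the two convexity inequalities: they are applied in opposite directions (at $x$ for the multiplier $u$, at $x^*$ for the multiplier $u^*$), and each must be paired with the correct nonnegativity fact ($u \geq 0$ from~\eqref{eq:nonnegativity}, $u^* \geq 0$ from~\eqref{eq:KKT}) so that multiplying through preserves the inequality's direction. Everything else is direct substitution; no compactness or limiting argument is needed, since $\Gc_\alpha$ is single-valued by Lemma~\ref{lem:optimality-conditions} and $\tilde{V}$ is differentiable.
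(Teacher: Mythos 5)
Your proposal is correct and follows essentially the same route as the paper's proof: both compute $\Dini_{\Gc_\alpha}\tilde{V}(x) = (x-x^*)^\top\Gc_\alpha(x)$, substitute the stationarity condition~\eqref{eq:dlagrangian}, use strong monotonicity plus~\eqref{eq:KKT1} to eliminate $F(x^*)$, and then apply the first-order convexity inequalities for $g_i$ (at $x$ with $u\ge 0$ and at $x^*$ with $u^*\ge 0$) together with affineness of $h$. The sign bookkeeping you flag is handled exactly as in the paper, so there is nothing to add.
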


We now move on to characterizing properties of~$W$. The following result 
is proven in Appendix~\ref{ap:W-properties}.

\begin{lemma}[Properties of $W$]
  \label{lem:W-properties}
  Assume MFCQ holds everywhere on~$\Cc$.  Define the
  matrix-valued function
  $Q:X \times \real^{m}_{\geq 0} \to \real^{n \times n}$ by
  \[ 
    Q(x, u) = \frac{1}{2}\bigg( \pfrac{F(x)}{x} + \pfrac{F(x)}{x}^\top
    \bigg) + \sum_{i=1}^{m}u_i\nabla^2 g_i(x).
  \] 
  Then, for all $x \in X$ and $(u, v) \in \Lambda_\alpha(x)$,
  \begin{equation}
    \label{eq:W-closed-form}
    W(x) = -\frac{1}{2}\norm{\Gc_\alpha(x)}^2 + \alpha u^\top g(x) +
    \alpha v^\top h(x) 
  \end{equation}
  and 
  \begin{equation}
    \label{eq:DW}
    \begin{aligned}
      \Dini_{\Gc_\alpha}W(x)
      & \geq \norm{\Gc_\alpha(x)}^2_{Q(x, u)}-\alpha^2 u^\top g(x) - \alpha^2 v^\top h(x). 
    \end{aligned}
  \end{equation}
\end{lemma}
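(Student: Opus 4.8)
The plan is to treat $W$ as the optimal value of the parametric program defining the projection $\Gc_\alpha(x) = \proj{T^{(\alpha)}_\Cc(x)}{-F(x)}$ and to exploit the KKT characterization from Lemma~\ref{lem:optimality-conditions}. The first observation is that $\Gc_\alpha(x)$ is \emph{exactly} the minimizer appearing in the definition of $W$: since $\frac{1}{2}\norm{\xi + F(x)}^2 = \xi^\top F(x) + \frac{1}{2}\norm{\xi}^2 + \frac{1}{2}\norm{F(x)}^2$ and the last term is independent of $\xi$, the minimizer over $T^{(\alpha)}_\Cc(x)$ of $\frac{1}{2}\norm{\xi + F(x)}^2$ coincides with that of $\xi^\top F(x) + \frac{1}{2}\norm{\xi}^2$. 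Hence $W(x) = \Gc_\alpha(x)^\top F(x) + \frac{1}{2}\norm{\Gc_\alpha(x)}^2$. To obtain~\eqref{eq:W-closed-form}, I would take the inner product of the stationarity condition~\eqref{eq:dlagrangian} with $\xi = \Gc_\alpha(x)$ to solve for $\xi^\top F(x)$, substitute into the previous display, and eliminate the cross terms $u^\top \pfrac{g(x)}{x}\xi$ and $v^\top \pfrac{h(x)}{x}\xi$ using the complementarity relation~\eqref{eq:transversality} and the equality constraint~\eqref{eq:pequality}, which turn them into $-\alpha u^\top g(x)$ and $-\alpha v^\top h(x)$, respectively.

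For the Dini bound, the idea is to write $W$ as a max-over-duals value function and then freeze the dual variables. Introduce the Lagrangian
\[ L(x, \xi, u, v) = \xi^\top F(x) + \frac{1}{2}\norm{\xi}^2 + u^\top\Big(\pfrac{g(x)}{x}\xi + \alpha g(x)\Big) + v^\top\Big(\pfrac{h(x)}{x}\xi + \alpha h(x)\Big). \]
Since the program is convex with affine constraints and Slater's condition holds (Proposition~\ref{prop:restricted-tangent-properties}), strong duality gives $W(x) = \max_{u \geq 0, v} \min_\xi L(x, \xi, u, v)$, and the chosen multipliers $(u, v) \in \Lambda_\alpha(x)$ attain the outer maximum. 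Freezing these optimal multipliers at the base point $x$ and letting $\Phi_h$ denote the flow of $\Gc_\alpha$, weak duality yields the lower bound $W(\Phi_h(x)) \geq d(\Phi_h(x))$, where $d(y) := \min_\xi L(y, \xi, u, v)$, with equality at $h = 0$ by strong duality at $x$. The inner minimization is unconstrained and strongly convex in $\xi$ (coefficient $1$ from $\frac{1}{2}\norm{\xi}^2$), so its minimizer $\xi(y) = -F(y) - \pfrac{g(y)}{y}^\top u - \pfrac{h(y)}{y}^\top v$ is smooth and $d$ is $C^1$, with $\nabla d(y) = \nabla_x L(y, \xi(y), u, v)$ by the envelope theorem; at $y = x$ the minimizer is $\xi(x) = \Gc_\alpha(x)$ by~\eqref{eq:dlagrangian}. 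Since $\limsup$ preserves the inequality $\frac{W(\Phi_h(x)) - W(x)}{h} \geq \frac{d(\Phi_h(x)) - d(x)}{h}$ and $d$ is differentiable along the flow, I obtain $\Dini_{\Gc_\alpha}W(x) \geq \nabla_x L(x, \Gc_\alpha(x), u, v)^\top \Gc_\alpha(x)$. It then remains to expand $\nabla_x L$: differentiating term by term gives the Hessian contributions $\pfrac{F(x)}{x}^\top\xi + \sum_{i}u_i \nabla^2 g_i(x)\xi$ (the Hessian of $h$ vanishes, $h$ being affine) plus $\alpha\pfrac{g(x)}{x}^\top u + \alpha\pfrac{h(x)}{x}^\top v$. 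Taking the inner product with $\xi = \Gc_\alpha(x)$ and symmetrizing the Jacobian yields $\norm{\Gc_\alpha(x)}^2_{Q(x,u)}$ for the first group, while $\alpha u^\top\pfrac{g(x)}{x}\xi + \alpha v^\top\pfrac{h(x)}{x}\xi$ collapses to $-\alpha^2 u^\top g(x) - \alpha^2 v^\top h(x)$ via~\eqref{eq:transversality} and~\eqref{eq:pequality}, giving exactly~\eqref{eq:DW}.

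The main obstacle, and the reason the statement is an inequality rather than an identity, is that the value function $W$ need not be differentiable where the active set of $T^{(\alpha)}_\Cc$ changes; a direct computation of $\nabla W$ or of its one-sided directional derivative would have to contend with this nonsmoothness. Freezing the optimal duals sidesteps this entirely: weak duality supplies a smooth lower-bounding surrogate $d$ whose flow-derivative is computed cleanly by the envelope theorem, and the weak-duality gap is precisely what is discarded to produce the $\geq$. The remaining care is purely regularity bookkeeping—confirming via Proposition~\ref{prop:existence-uniqueness} that $\Gc_\alpha$ is locally Lipschitz so that $\Phi_h(x) = x + h\Gc_\alpha(x) + o(h)$, and that strong convexity in $\xi$ legitimizes the implicit-function and envelope steps—none of which presents a genuine difficulty.
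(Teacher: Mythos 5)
Your proof is correct, and the first half (the identification of the minimizer with $\Gc_\alpha(x)$ via completion of squares, followed by the elimination of $\xi^\top F(x)$ through~\eqref{eq:dlagrangian}, \eqref{eq:pequality}, and~\eqref{eq:transversality}) is essentially identical to the paper's. Where you genuinely diverge is in the Dini-derivative bound. The paper invokes a sensitivity result for parametric nonlinear programs (Theorem~4.2 of Bondarevsky--Leschov--Minchenko, reference \cite{VB-AL-ML:16}), which yields the \emph{exact} directional derivative $\Dini_{\Gc_\alpha}W(x) = \sup_{(u',v') \in \Lambda_\alpha(x)} \nabla_x L(x;\Gc_\alpha(x),u',v')^\top \Gc_\alpha(x)$ and then discards the supremum to get the inequality. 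You instead build a smooth minorant by freezing the optimal duals: weak duality gives $W(y) \geq d(y) = \min_\xi L(y,\xi,u,v)$ everywhere, strong duality gives equality at $y=x$, and the envelope theorem (legitimate here because the inner problem is unconstrained and strongly convex in $\xi$, so the minimizer is an explicit smooth function of $y$) lets you differentiate $d$ along the flow. This is a self-contained and more elementary derivation of the same lower bound; what it gives up relative to the paper's citation is the exact characterization of $\Dini_{\Gc_\alpha}W$ as a supremum over the full multiplier set, which is not needed for the lemma. Two bookkeeping points you correctly flag but should make explicit if writing this up: the appearance of $\nabla^2 g_i$ in $Q(x,u)$ presupposes $g$ is twice differentiable (the paper assumes this implicitly, cf.\ the proof of Proposition~\ref{prop:existence-uniqueness}), and passing from $\limsup_{h\to 0^+} h^{-1}(d(\Phi_h(x))-d(x))$ to $\nabla d(x)^\top\Gc_\alpha(x)$ uses the local Lipschitzness of $\Gc_\alpha$ so that $\Phi_h(x) = x + h\Gc_\alpha(x) + o(h)$, which again is supplied by Proposition~\ref{prop:existence-uniqueness} under MFCQ and the constant-rank condition.
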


We are now ready to prove Theorem~\ref{theorem:C-stability}.

\begin{proof}[Proof of Theorem~\ref{theorem:C-stability}]
  Let $x^* \in \SOL(F, \Cc)$ and suppose the hypotheses of (i)
  hold. Consider the function $V:X \to \real$ defined
  by~\eqref{eq:C-lyap-candidate}. We show that $V$ is a (strict)
  Lyapunov function when $F$ is ($\mu$-strongly) monotone.  Let
  $x \in \Cc$ and $(u, v) \in \Lambda_\alpha(x)$. 
  Then,
  \new{since $u \geq 0$, $g(x) \leq 0$, and $h(x) = 0$, it follows that
  $\alpha u^\top g(x) + \alpha v^\top h(x) \leq 0$}.
  Thus, by examining the
  expression in \eqref{eq:W-closed-form} we see that $W(x) \leq 0$ for
  all $x \in \Cc$ with equality if and only if $x \in \SOL(F, \Cc)$.
  Thus, $V$ is positive definite with respect to $x^*$. Next,
  \[\Dini_{\Gc_\alpha}V(x) = \Dini_{\Gc_\alpha}\tilde{V}(x) - \frac{1}{\alpha^2} \Dini_{\Gc_\alpha}W(x),\]
  and by Lemmas~\ref{lem:V1} and~\ref{lem:W-properties},
  \[
    \begin{aligned}
      \Dini_{\Gc_\alpha}V(x) \leq -\frac{1}{\alpha^2}
       \norm{\Gc_\alpha(x)}^2_{Q(x, u)} + (u^*)^\top g(x) + (v^*)^\top h(x) + u^\top g(x^*) + v^\top h(x^*). 
    \end{aligned}
  \]
  Since $u \geq 0$ and $x^* \in \Cc$, we have $g(x^*) \leq 0$ and
  $h(x^*) = 0$, and therefore 
  \[ u^\top g(x^*) + v^\top h(x^*) \leq 0. \]
  Similarly, since $u^* \geq 0$, and $x\in \Cc$, we have $g(x) \leq 0$
  and $h(x) = 0$, and therefore
  \[ (u^*)^\top g(x) + (v^*)^\top h(x) \leq 0.\]
  Finally, since $F$ is
  monotone and $g$ is convex, it follows that $Q(x, u)$ is positive
  semi-definite, and therefore $\Dini_{\Gc_\alpha}V(x) \leq 0$. To
  show (ii), we can use the same reasoning above to show that
  $\Dini_{\Gc_\alpha}V(x) \leq -\mu \norm{x - x^*}^2$.  
\end{proof}

Next, we discuss stability with respect to the entire state space,
which ensures the safe monotone flow can be used to solve
$\VI(F, \Cc)$ even for infeasible initial conditions.

\begin{theorem}\longthmtitle{Stability of Safe Monotone Flow with
    Respect to $\real^n$}
  \label{theorem:stability}
  Assume MFCQ and the constant-rank condition hold for all $x \in \Cc$.
  Then
  \begin{enumerate}
  \item If $x^* \in \SOL(F, \Cc)$ and $F$ is monotone, then $x^*$ is
    globally Lyapunov stable;
  \item If $x^* \in \SOL(F, \Cc)$ and $F$ is $\mu$-strongly monotone,
    then $x^*$ is globally asymptotically stable.
  \end{enumerate}
\end{theorem}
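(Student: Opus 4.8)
The plan is to obtain stability with respect to the ambient space by \emph{combining} the two stability statements already established---stability of $x^*$ relative to $\Cc$ (Theorem~\ref{theorem:C-stability}) with asymptotic stability of the set $\Cc$ on $X$ (Theorem~\ref{thm:safe-stability}\ref{item:safety})---through a reduction (cascade) principle for nested positively invariant sets, such as the reduction theorem of Seibert and Su\'arez. One takes the nested pair $\{x^*\} \subset \Cc$, both positively invariant (forward invariance of $\Cc$ by Theorem~\ref{thm:safe-stability}\ref{item:safety}, and of $\{x^*\}$ since it is an equilibrium by Theorem~\ref{thm:safe-stability}\ref{item:eq}), and verifies the hypotheses that let stability of the inner set relative to the outer set, plus (asymptotic) stability of the outer set, be upgraded to (asymptotic) stability of the inner set in $X$.

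Before invoking the principle, I would record the \emph{transverse decay} of infeasibility supplied by the VCBF structure, which furnishes the confinement hypothesis of the reduction argument and clarifies the sense of ``global.'' Since $\Gc_\alpha(x) \in K_{\textnormal{cbf}, \alpha}(x)$ for all $x \in X$ by Proposition~\ref{prop:smf-equivalent-formulation}, along any trajectory one has
\[ \tfrac{d}{dt} h_j(x(t)) = -\alpha\, h_j(x(t)), \qquad \tfrac{d}{dt} g_i(x(t)) \le -\alpha\, g_i(x(t)), \]
whence $h_j(x(t)) = e^{-\alpha t} h_j(x(0))$ and $[g_i(x(t))]_+ \le e^{-\alpha t}[g_i(x(0))]_+$. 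Together with the MFCQ error bound (Robinson regularity, as used in Proposition~\ref{prop:restricted-tangent-properties}) this gives $\dist(x(t), \Cc) \to 0$. Because $\Cc \subset \text{int}(X)$, trajectories starting near $x^*$ are confined to a neighborhood of $\Cc$ inside $X$, hence do not approach $\partial X$ and are forward complete, ruling out the finite-time escape alternative of Proposition~\ref{prop:existence-uniqueness}; this is why ``global'' is understood relative to the domain of definition $X$ of the flow.

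With these pieces in place, part (i) follows by feeding into the reduction principle the stability of $x^*$ relative to $\Cc$ from Theorem~\ref{theorem:C-stability}\ref{item:lyap}, the stability of $\Cc$ from Theorem~\ref{thm:safe-stability}\ref{item:safety}, and the confinement above. Part (ii) follows identically, now using the \emph{asymptotic} stability of $x^*$ relative to $\Cc$ from Theorem~\ref{theorem:C-stability}\ref{item:as} together with the asymptotic stability of $\Cc$: convergence $x(t) \to \Cc$ and, within $\Cc$, $x(t) \to x^*$ combine to give $x(t) \to x^*$, with the reduction principle supplying the required uniformity.

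The main obstacle is exactly this last uniformity, i.e.\ verifying the interface hypothesis of the reduction theorem rather than any single computation. The difficulty is that the Lyapunov function $V$ of Theorem~\ref{theorem:C-stability} is positive definite with respect to $x^*$ only on $\Cc$: off $\Cc$ the term $\alpha u^\top g(x)$ appearing in $W$ can be positive, so $-\tfrac{1}{\alpha^2}W$ need not be nonnegative and $V$ cannot simply be reused on $X$. The reduction principle patches the relative estimate with the transverse decay precisely to circumvent this. A self-contained alternative is to build an ambient Lyapunov function by adding to $\tilde V - \tfrac{1}{\alpha^2}W$ an infeasibility penalty $P(x) = \sum_{i=1}^m [g_i(x)]_+ + \sum_{j=1}^k |h_j(x)|$, which satisfies $\Dini_{\Gc_\alpha} P(x) \le -\alpha P(x)$; the remaining work is then to choose the penalty weight large enough to dominate the multiplier-weighted infeasibility $\tfrac{1}{\alpha}(u^\top g(x) + v^\top h(x))$ near $\Cc$, which requires a local bound on the multipliers $(u,v) \in \Lambda_\alpha(x)$ and is where the analysis becomes delicate.
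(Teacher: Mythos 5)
Your primary route---deducing ambient stability from stability of $x^*$ relative to $\Cc$ plus asymptotic stability of $\Cc$ via a Seibert--Su\'arez-type reduction theorem---is not what the paper does, and as stated it has a genuine gap in part (i). Reduction theorems that upgrade relative stability to ambient stability require \emph{asymptotic} stability of the inner set relative to the outer set (or an equivalent uniform-attractivity interface condition); mere relative Lyapunov stability plus asymptotic stability of the outer set does not suffice. A standard counterexample is $\dot y = -y^3$, $\dot x = y^2$ with $\Gamma_2=\{y=0\}$ and $\Gamma_1=\{0\}$: $\Gamma_1$ is stable relative to $\Gamma_2$, $\Gamma_2$ is asymptotically stable, trajectories converge to $\Gamma_2$, yet $x(t)\to\infty$ for every $y_0\neq 0$. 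When $F$ is merely monotone, Theorem~\ref{theorem:C-stability}(i) gives only Lyapunov (not asymptotic) stability of $x^*$ relative to $\Cc$, so the required hypothesis is unavailable, and your confinement estimate controls only $\dist(x(t),\Cc)$, not $\norm{x(t)-x^*}$. You name this ``interface hypothesis'' as the main obstacle but do not close it; that is exactly where the content of the theorem lies. For part (ii) the reduction route is more plausible, but the \emph{global} claim would additionally need global versions of every hypothesis (global attractivity of $\Cc$, global relative asymptotic stability, global boundedness of trajectories), none of which you verify.

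The ``self-contained alternative'' in your last paragraph is, in essence, the paper's actual proof: it uses $V_\epsilon(x) = \tilde V(x) + \big[-\tfrac{1}{\alpha^2}W(x)\big]_+ + \delta_\epsilon(x)$ with $\delta_\epsilon = \tfrac{1}{\epsilon}\big(\sum_i [g_i]_+ + \sum_j \abs{h_j}\big)$. Two points you leave open resolve more cleanly than you anticipate. First, the penalty weight does \emph{not} require a local bound on $\Lambda_\alpha(x)$ near $\Cc$: in the combined estimate from Lemmas~\ref{lem:V1} and~\ref{lem:W-properties}, the current multipliers cancel (the terms $-(u-u^*)^\top(g(x)-g(x^*))$ and $+u^\top g(x)$ recombine into $u^\top g(x^*) + (u^*)^\top g(x)$, the first of which is nonpositive), so only the fixed set $\Lambda_\alpha(x^*)$ must be bounded---which MFCQ at $x^*$ gives---and one chooses $\alpha/\epsilon$ above $\sup_{\Lambda_\alpha(x^*)}\norm{(u^*,v^*)}_\infty$. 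Second, since $-W$ can be negative off $\Cc$, the paper truncates it with $[\cdot]_+$, which forces a three-case analysis ($W<0$, $W>0$, $W=0$, the last with two subcases) to bound the Dini derivative; your sketch does not address how the truncated term is differentiated. Until you either carry out this construction or supply the missing asymptotic-relative-stability hypothesis for the reduction argument, the proof is incomplete.
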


To prove Theorem~\ref{theorem:stability}, we can no longer rely on the
Lyapunov function $V$ defined in \eqref{eq:C-lyap-candidate} because
it is no longer positive definite and may take negative values for
$x \not \in \Cc$. Instead, we consider the new candidate
Lyapunov function
\begin{equation}
  \label{eq:R-lyap-candidate}
  V_\epsilon(x) = \tilde{V}(x) + \bigg[-\frac{1}{\alpha^2} W(x)
  \bigg]_{+} + \delta_\epsilon(x) 
\end{equation}
where $\epsilon > 0$ and $\delta_\epsilon$ is the penalty function
given by
\[
  \delta_\epsilon(x) = \frac{1}{\epsilon}\sum_{i=1}^{m}[g_i(x)]_{+} +
  \frac{1}{\epsilon}\sum_{j=1}^{k}|h_j(x)|.
\]
Before proceeding to the proof of Theorem~\ref{theorem:stability}, we
provide a bound for the Dini derivative of $\delta_\epsilon$
along~$\Gc_\alpha$, which is proven in Appendix \ref{ap:Ddelta}. 

\begin{lemma}[Dini Derivative of $\delta_\epsilon$]
  \label{lem:Ddelta}
  For all $x \in X$ and $\xi \in \real^n$, $\delta_\epsilon$ is
  directionally differentiable at~$x$ along the direction~$\xi$. In particular,
  \begin{equation}
    \label{eq:Ddelta}
    \Dini_{\Gc_\alpha}\delta_\epsilon(x) \leq
    -\frac{\alpha}{\epsilon}\sum_{i \in I_+(x)}g_i(x) -
    \frac{\alpha}{\epsilon}\sum_{j \in I_h(x)} \abs{h_j(x)}, 
  \end{equation}
  where $I_h(x) = \{ j \in [1, k] \mid h_j(x) \neq 0 \}$.
\end{lemma}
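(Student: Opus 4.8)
The plan is to recognize $\delta_\epsilon$ as a sum of compositions of the convex scalar functions $p(c) = [c]_+$ and $q(c) = \abs{c}$ with the $C^1$ maps $g_i$ and $h_j$, compute its directional derivative by the chain rule, and then evaluate it in the direction $\xi = \Gc_\alpha(x)$ using the primal feasibility conditions \eqref{eq:pinequality}--\eqref{eq:pequality} that $\Gc_\alpha(x)$ satisfies by Lemma~\ref{lem:optimality-conditions}.

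First I would establish directional differentiability. Both $p$ and $q$ are convex, hence directionally differentiable on $\real$, with
\[
  p'(c; d) = \begin{cases} d, & c > 0, \\ {[d]_+}, & c = 0, \\ 0, & c < 0, \end{cases}
  \qquad
  q'(c; d) = \begin{cases} \sgn(c)\, d, & c \neq 0, \\ \abs{d}, & c = 0. \end{cases}
\]
Since each $g_i, h_j$ is differentiable, the chain rule for directional derivatives of a directionally differentiable outer function composed with a differentiable inner map gives that every summand of $\delta_\epsilon$, and hence $\delta_\epsilon$ itself, is directionally differentiable at each $x$ along each $\xi$, with inner directional derivatives $\nabla g_i(x)^\top \xi$ and $\nabla h_j(x)^\top \xi$. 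As $\delta_\epsilon$ is locally Lipschitz and $\Gc_\alpha$ is (locally Lipschitz, hence) continuous by Proposition~\ref{prop:existence-uniqueness}, the preliminary identity $\Dini_{\Gc_\alpha}\delta_\epsilon(x) = \delta_\epsilon'(x; \Gc_\alpha(x))$ applies, reducing the claim to computing this directional derivative.

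Next I would substitute $\xi = \Gc_\alpha(x)$. Partitioning the inequality sum by $I_+(x), I_0(x), I_-(x)$, the chain rule contributes $\nabla g_i(x)^\top \xi$ on $I_+(x)$, $[\nabla g_i(x)^\top \xi]_+$ on $I_0(x)$, and $0$ on $I_-(x)$. Primal feasibility \eqref{eq:pinequality} gives $\nabla g_i(x)^\top \xi \le -\alpha g_i(x)$ for every $i$: on $I_+(x)$ this bounds each term by $-\alpha g_i(x)$, while on $I_0(x)$ it yields $\nabla g_i(x)^\top \xi \le 0$, so the positive part vanishes. For the equality sum, \eqref{eq:pequality} gives the exact identity $\nabla h_j(x)^\top \xi = -\alpha h_j(x)$, so the contribution is $\sgn(h_j(x))(-\alpha h_j(x)) = -\alpha \abs{h_j(x)}$ when $h_j(x) \neq 0$ and $\abs{-\alpha h_j(x)} = 0$ when $h_j(x) = 0$. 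Summing the surviving terms over $I_+(x)$ and $I_h(x)$ and scaling by $1/\epsilon$ yields exactly \eqref{eq:Ddelta}.

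The main obstacle, though mild, is the nonsmoothness of $p$ and $q$ at the boundary indices $i \in I_0(x)$ and $j$ with $h_j(x) = 0$, where the composition is not differentiable and a naive gradient-based bound would be invalid. The resolution is that the directional-derivative formulas at these kinks, combined with feasibility---namely $\nabla g_i(x)^\top \Gc_\alpha(x) \le 0$ on $I_0(x)$ and $\nabla h_j(x)^\top \Gc_\alpha(x) = 0$ when $h_j(x) = 0$---force precisely these boundary terms to contribute zero, so they drop out and the estimate reduces to a direct substitution of \eqref{eq:pinequality}--\eqref{eq:pequality}.
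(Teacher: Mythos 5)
Your proposal is correct and follows essentially the same route as the paper: obtain the exact directional-derivative formula for the $\ell^1$-penalty (split over $I_+,I_0,I_-$ and $I_h$, its complement), identify $\Dini_{\Gc_\alpha}\delta_\epsilon(x)=\delta_\epsilon'(x;\Gc_\alpha(x))$, and substitute the feasibility relations \eqref{eq:pinequality}--\eqref{eq:pequality}. The only difference is that you derive that formula from the chain rule for convex outer functions composed with $C^1$ maps, whereas the paper simply cites \cite[Proposition 3]{GdP-LG:89} for it.
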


We are now ready to prove Theorem~\ref{theorem:stability}.

\begin{proof}[Proof of Theorem~\ref{theorem:stability}]
  We begin by showing (i). Let $x^* \in \SOL(F, \Cc)$. Note that, from
  the optimality conditions \eqref{eq:KKT_proj}, $\Lambda_\alpha(x^*)$
  corresponds to the set of Lagrange multipliers of the solution $x^*$
  to $\VI(F, \Cc)$. Because MFCQ holds at $x^*$, it follows that
  $\Lambda_\alpha(x^*)$ is bounded. Thus, it is possible to choose
  $\epsilon > 0$ small enough so that
  \begin{equation}
    \label{eq:alpha-bound}
    \frac{\alpha}{\epsilon} > \sup_{(u^*, v^*) \in
      \Lambda_\alpha(x^*)} \bigg\{ \norm{\begin{bmatrix}
        u^* \\ v^*
      \end{bmatrix}}_\infty \bigg\}. 
  \end{equation}
  Next, we observe that it follows immediately from the
  definition~\eqref{eq:R-lyap-candidate} that $V_\epsilon$ is positive
  definite with respect to $x^*$. Finally, we compute
  $\Dini_{\Gc_\alpha}V_\epsilon(x)$ and show that it is negative
  semidefinite. Let $x \in X$. We consider three cases: $W(x) < 0$,
  $W(x) > 0$, and $W(x) = 0$.  In the case where $W(x) < 0$,
  \[
    \Dini_{\Gc_\alpha}V_\epsilon = \Dini_{\Gc_\alpha}\tilde{V}(x) -
    \frac{1}{\alpha^2} \Dini_{\Gc_\alpha}W(x) +
    \Dini_{\Gc_\alpha}\delta_\epsilon(x).
  \]
  Combining the bounds in Lemmas~\ref{lem:V1},~\ref{lem:W-properties},
  and~\ref{lem:Ddelta},
  \begin{equation}
    \label{eq:case1}
    \begin{aligned}
        &\Dini_{\Gc_\alpha}V_\epsilon(x) \leq
          -\frac{1}{\alpha^2}\norm{\Gc_\alpha(x)}^2_{Q(x, u)}  +\sum_{i \in I_+(x)}\left(u_i^* -
          \frac{\alpha}{\epsilon}\right)g_i(x) +  \sum_{j \in
          I_h(x)}\left(v_j^* - \frac{\alpha}{\epsilon}\right)\abs{h_j(x)} . 
    \end{aligned}
  \end{equation}
  Since $\epsilon$ satisfies~\eqref{eq:alpha-bound}, it
  follows that $\Dini_{\Gc_\alpha}V_\epsilon(x) \leq 0$.

  For the case where $W(x) > 0$, we rearrange
  \eqref{eq:W-closed-form} to write
  \[
    u^\top g(x) + v^\top h(x) = \frac{1}{\alpha}W(x) +
    \frac{1}{2\alpha}\norm{\Gc(x)}^2 >
    \frac{1}{2\alpha}\norm{\Gc(x)}^2.
  \]
  Then, we have  
  \begin{equation}
    \label{eq:case2}
    \begin{aligned}
      \Dini_{\Gc_\alpha}V_\epsilon(x) &= \Dini_{\Gc_\alpha}\tilde{V}(x) +
        \Dini_{\Gc_\alpha}\delta_\epsilon(x) 
      \\
      &\leq -(u - u^*)^\top g(x) - (v - v^*)^\top h(x) -\frac{\alpha}{\epsilon}\sum_{i \in I_+(x)}g_i(x) -
        \frac{\alpha}{\epsilon}\sum_{j \in I_h(x)} \abs{h_j(x)} 
      \\
      &\leq -\frac{1}{2\alpha}\norm{\Gc_\alpha(x)}^2  + \sum_{i \in I_+(x)}\left(u_i^* -
        \frac{\alpha}{\epsilon}\right)g_i(x) +  \sum_{j \in
        I_h(x)}\left(v_j^* - \frac{\alpha}{\epsilon}\right)\abs{h_j(x)}, 
    \end{aligned}
  \end{equation}
  and $\Dini_{\Gc_\alpha}V_\epsilon(x) \leq 0$.  In the
 case where $W(x) = 0$,
  \[
    \Dini_{\Gc_\alpha}V_\epsilon(x) = \Dini_{\Gc_\alpha}\tilde{V}(x) +
    \frac{1}{\alpha^2} [-\Dini_{\Gc_\alpha}W(x)]_{+} +
    \Dini_{\Gc_\alpha}\delta_\epsilon(x),
  \]
  which leads us to two subcases: (a) $\Dini_{\Gc_\alpha}W(x) < 0$ and
  (b) $\Dini_{\Gc_\alpha}W(x) \geq 0$. In subcase (a),
  $\Dini_{\Gc_\alpha}V_\epsilon(x)$ satisfies the bound in
  \eqref{eq:case1} and, therefore,
  $\Dini_{\Gc_\alpha}V_\epsilon(x) \leq 0$.  In subcase (b),
  \[ 
    u^\top g(x) + v^\top h(x) = \frac{1}{2\alpha}\norm{\Gc_\alpha(x)}^2\] 
  and therefore
  $\Dini_{\Gc_\alpha}V_\epsilon(x)$ satisfies the bound in
  \eqref{eq:case2}, so~$\Dini_{\Gc_\alpha}V_\epsilon(x) \leq 0$.
  To prove~(ii), we can use the same arguments above to show 
  in each case
  $\Dini_{\Gc_\alpha}V_\epsilon(x) \leq -\mu \norm{x - x^*}^2$. 
\end{proof}

We conclude this section by discussing the contraction properties of
the safe monotone flow.  Contraction refers to the property that any
two trajectories of the system approach each other exponentially
(cf. \cite{AD-SJ-FB:22, FB:23} for a precise definition), and implies
exponential stability of an equilibrium. We show that, for
sufficiently large $\alpha$, the safe monotone flow system is
contracting provided $F$ is globally Lipschitz and the constraint set
$\Cc$ is polyhedral. 
Our analysis relies on the following result, which follows as 
a direct consequence of \cite[Lemma 2.1]{NDY:95-MOR}. The proof of this result can be found 
in Appendix \ref{ap:equiv-opt}. 
\begin{lemma}[Linear Program Characterization of Solution to QP]
  \label{lem:qp-to-lp}
  Consider the following quadratic program 
  \begin{equation}
    \label{eq:qp-lemma}
    \underset{(u, v) \in \real^{m}_{\geq 0} \times \real^k}{\textnormal{min}}\;\frac{1}{2}\norm{\begin{bmatrix}
      u \\ v
    \end{bmatrix}}_{\tilde{Q}}^2  + c^\top \begin{bmatrix}
      u \\ v
    \end{bmatrix} + p ,
  \end{equation}
  where $\tilde{Q} \succeq 0$. Then $(u^*, v^*)$ solves
  \eqref{eq:qp-lemma} if and only if it is a solution to the linear
  program
  \begin{equation}
    \label{eq:lp-lemma}
    \underset{(u, v) \in \real^{m}_{\geq 0} \times \real^k}{\textnormal{min}}
    \left(\tilde{Q}\begin{bmatrix}
      u^* \\ v^*
    \end{bmatrix} + c \right)^\top \begin{bmatrix}
      u \\ v
    \end{bmatrix}.
  \end{equation}
\end{lemma}

We now show that the safe monotone flow is contracting. 

\begin{theorem}\longthmtitle{Contraction and Exponential Stability of
  Safe Monotone Flow}\label{thm:contraction}
Let $F$ be $\mu$-strongly monotone and globally Lipschitz with
constant~$\ell_F$ and $\Cc$ a polyhedral set defined by
\eqref{eq:constraint-set} with $g(x) = Gx - c_{g}$ and
$h(x) = Hx - c_{h}$.  
If
\begin{equation}
  \label{eq:contraction-bound}
  \alpha >  \frac{\ell_F^2}{4\mu},
\end{equation}
then the safe monotone flow
is contracting with rate
$ c = \mu - \frac{\ell_F^2}{4\alpha}$.
In particular, the unique solution $x^* \in \SOL(F, \Cc)$ is
globally exponentially stable.
\end{theorem}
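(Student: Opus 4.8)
The plan is to establish that $\Gc_\alpha$ satisfies the one-sided Lipschitz (incremental monotonicity) estimate
\[
  (x_1 - x_2)^\top (\Gc_\alpha(x_1) - \Gc_\alpha(x_2)) \leq -c\norm{x_1 - x_2}^2, \qquad \forall x_1, x_2,
\]
with $c = \mu - \ell_F^2/(4\alpha)$, since this is exactly the Euclidean (Demidovich) characterization of contraction: along any two trajectories, $\tfrac{d}{dt}\tfrac12\norm{x_1(t)-x_2(t)}^2 = (x_1-x_2)^\top(\Gc_\alpha(x_1)-\Gc_\alpha(x_2)) \leq -c\norm{x_1(t)-x_2(t)}^2$, and Gr\"onwall's inequality then yields exponential convergence at rate $c$. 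A preliminary observation makes this global: in the polyhedral case, for any $x_0 \in \Cc$ the vector $\xi = -\alpha(x-x_0)$ lies in $T^{(\alpha)}_\Cc(x)$ for every $x \in \real^n$, so $\Gc_\alpha$ is defined on all of the convex set $\real^n$; moreover, being the solution map of a parametric, strongly convex QP with affine constraints, it is globally Lipschitz, so trajectories exist and are unique.

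To prove the estimate, I would fix $x_1, x_2$, write $\xi_i = \Gc_\alpha(x_i)$, and invoke the optimality conditions \eqref{eq:KKT_proj} (necessary and sufficient here, since the constraints are affine) to obtain possibly non-unique multipliers $(u_i, v_i)$. The stationarity equation \eqref{eq:dlagrangian} gives $\xi_i = -F(x_i) - G^\top u_i - H^\top v_i$, so that
\[
  G^\top(u_1 - u_2) + H^\top(v_1 - v_2) = -(F(x_1) - F(x_2)) - (\xi_1 - \xi_2),
\]
a quantity uniquely determined by $x_i, \xi_i$ even though the multipliers are not. Expanding the inner product and applying $\mu$-strong monotonicity of $F$ gives
\[
  (x_1-x_2)^\top(\xi_1-\xi_2) \leq -\mu\norm{x_1-x_2}^2 - (G(x_1-x_2))^\top(u_1-u_2) - (H(x_1-x_2))^\top(v_1-v_2).
\]
The crux is to control the two multiplier cross-terms, and this is where the polyhedral structure ($\partial g/\partial x = G$, $\partial h/\partial x = H$ constant) is decisive.

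Writing $s_i = G\xi_i + \alpha g(x_i)$ for the inequality slacks, the constraint data yield the affine identities $\alpha G(x_1-x_2) = (s_1-s_2) - G(\xi_1-\xi_2)$ (from \eqref{eq:pinequality}) and $\alpha H(x_1-x_2) = -H(\xi_1-\xi_2)$ (from \eqref{eq:pequality}). Substituting, the cross-terms split into a slack-difference part $-\tfrac1\alpha(s_1-s_2)^\top(u_1-u_2)$ and a part $\tfrac1\alpha(\xi_1-\xi_2)^\top[G^\top(u_1-u_2)+H^\top(v_1-v_2)]$. I would show $(s_1-s_2)^\top(u_1-u_2) \geq 0$, so the slack part can be discarded; this monotonicity of the multiplier map is where Lemma~\ref{lem:qp-to-lp} enters, reducing the dual QP defining $(u_i,v_i)$ to a linear program whose optimality conditions, compared across $x_1$ and $x_2$, give $s_1^\top u_1 \geq s_1^\top u_2$ and $s_2^\top u_2 \geq s_2^\top u_1$ (equivalently, this follows directly from $s_i \leq 0$, $u_i \geq 0$, and complementarity $u_i^\top s_i = 0$ in \eqref{eq:transversality}). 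Feeding the stationarity identity into the remaining part collapses everything back to $F$- and $\xi$-differences, leaving
\[
  (x_1-x_2)^\top(\xi_1-\xi_2) \leq -\mu\norm{x_1-x_2}^2 - \tfrac1\alpha(\xi_1-\xi_2)^\top(F(x_1)-F(x_2)) - \tfrac1\alpha\norm{\xi_1-\xi_2}^2.
\]

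Finally, I would bound the indefinite middle term by $\tfrac{\ell_F}{\alpha}\norm{\xi_1-\xi_2}\norm{x_1-x_2}$ via Cauchy--Schwarz and global Lipschitzness of $F$, and complete the square in $\norm{\xi_1-\xi_2}$: the terms $-\tfrac1\alpha\norm{\xi_1-\xi_2}^2 + \tfrac{\ell_F}{\alpha}\norm{\xi_1-\xi_2}\norm{x_1-x_2}$ equal $-\tfrac1\alpha\big(\norm{\xi_1-\xi_2} - \tfrac{\ell_F}{2}\norm{x_1-x_2}\big)^2 + \tfrac{\ell_F^2}{4\alpha}\norm{x_1-x_2}^2 \leq \tfrac{\ell_F^2}{4\alpha}\norm{x_1-x_2}^2$. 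This delivers the estimate with $c = \mu - \ell_F^2/(4\alpha)$, which is positive precisely under \eqref{eq:contraction-bound}, establishing contraction. Global exponential stability of the unique $x^* \in \SOL(F,\Cc)$ then follows, since $x^*$ is an equilibrium by Theorem~\ref{thm:safe-stability}(ii). I expect the main obstacle to be the bookkeeping of the multiplier terms: recognizing that the polyhedral affine identities let one trade $G(x_1-x_2)$ and $H(x_1-x_2)$ for slack- and $\xi$-differences, and that the multiplier map is monotone (the role of Lemma~\ref{lem:qp-to-lp}); once these are in hand, strong monotonicity and the completion of the square are routine.
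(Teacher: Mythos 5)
Your proof is correct and shares the overall skeleton of the paper's argument: establish the one-sided Lipschitz estimate $(x_1-x_2)^\top(\Gc_\alpha(x_1)-\Gc_\alpha(x_2)) \leq -c\norm{x_1-x_2}^2$, use the stationarity identity $\Gc_\alpha(x) = -F(x) - G^\top u - H^\top v$ together with $\mu$-strong monotonicity, control the multiplier cross-term, and finish with Cauchy--Schwarz and a quadratic optimization (your completion of the square is exactly the optimized form of the paper's Young's-inequality step with free parameter $\epsilon$, and both land on $c = \mu - \ell_F^2/(4\alpha)$). Where you genuinely diverge is in the middle step. The paper bounds the cross-term $-[u_x-u_y;v_x-v_y]^\top[G(x-y);H(x-y)]$ by passing to the Lagrangian dual QP, invoking Lemma~\ref{lem:qp-to-lp} (Yen's reduction of the QP to a linear program), and cross-comparing the optimality of $(u_x,v_x)$ and $(u_y,v_y)$ in each other's LPs. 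You instead stay entirely in the primal KKT system~\eqref{eq:KKT_proj}: the affine identities $\alpha G(x_1-x_2) = (s_1-s_2) - G(\xi_1-\xi_2)$ and $\alpha H(x_1-x_2) = -H(\xi_1-\xi_2)$ split the cross-term into a slack part, discarded via the elementary observation that $s_i \leq 0$, $u_i \geq 0$, and $u_i^\top s_i = 0$ imply $(s_1-s_2)^\top(u_1-u_2) = -s_1^\top u_2 - s_2^\top u_1 \geq 0$, and a part that the stationarity identity collapses back to $F$- and $\xi$-differences. One can check the two routes produce the identical intermediate bound $-\tfrac1\alpha(\xi_1-\xi_2)^\top(F(x_1)-F(x_2)) - \tfrac1\alpha\norm{\xi_1-\xi_2}^2$ (substitute $G^\top\Delta u + H^\top\Delta v = -\Delta F - \Delta\xi$ into the paper's expression), so nothing is lost; what your version buys is the elimination of Lemma~\ref{lem:qp-to-lp} from the proof, replaced by a two-line complementarity argument. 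Your preliminary observation that $\xi = -\alpha(x - x_0)$ with $x_0 \in \Cc$ lies in $T^{(\alpha)}_\Cc(x)$ for every $x \in \real^n$ is also correct and makes the global domain of definition explicit in the polyhedral case, a point the paper leaves implicit.
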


\begin{proof}
  We claim that if the assumptions hold, then 
  \begin{equation}
    \label{eq:is-contracting}
    (x - y)^\top (\Gc_\alpha(x) - \Gc_\alpha(y)) \leq -c \norm{x - y}^2, 
  \end{equation}
  in which case the system is contracting by \cite[Theorem
  31]{AD-SJ-FB:22}, and exponential stability of
  $x^* \in \SOL(F, \Cc)$ follows as a consequence.  To show the claim,
  from~\eqref{eq:dlagrangian}, note that
  \[
    \Gc_\alpha(x) = -F(x) - G^\top u_x - H^\top v_x
  \]
  for any $(u_x, v_x) \in \Lambda_\alpha(x)$. Let then $x, y \in X$ and
  $(u_x, v_x) \in \Lambda_\alpha(x)$ and
  $(u_y, v_y) \in \Lambda_\alpha(y)$. Then, using the strong
  monotonicity of $F$,
  \begin{equation}
    \label{eq:contraction-proof}
    \begin{aligned}
      (x - y)^\top (\Gc_\alpha(x) - \Gc_\alpha(y)) &= -(x - y)^\top(F(x) - F(y)) - 
        (x - y)^\top \begin{bmatrix}
         G^\top & H^\top 
        \end{bmatrix}\begin{bmatrix}
         u_x - u_y \\ v_x - v_y
        \end{bmatrix}
      \\
        &\leq -\mu \norm{x - y}^2 - \begin{bmatrix}
            u_x - u_y \\ v_x - v_y
           \end{bmatrix}^\top \begin{bmatrix}
            G(x - y) \\ H(x - y)
           \end{bmatrix}.
    \end{aligned}
  \end{equation}
  Next, let $\tilde{J}(x; u, v) = -\inf_{\xi \in \real^n} L(x; \xi, u, v)$,
  where $L$ is the Lagrangian of \eqref{eq:smooth-proj-grad}, 
  defined
  as
  \begin{align}\label{eq:Lagrangian}
    L(x; \xi, u, v) = \xi^\top F(x) + \frac{1}{2}\norm{\xi}^2 + \sum_{i=1}^{m}u_i(\nabla g_i(x)^\top \xi + \alpha g_i(x)) +
        \sum_{i=1}^{k}v_i(\nabla h_i(x)^\top \xi + \alpha h_i(x)),
  \end{align}
  and $\tilde{Q} \in \real^{(m + k)\times (m + k)}$ is given by 
  \begin{equation}
    \label{eq:tildeQ}
    \tilde{Q} = \begin{bmatrix}
      GG^\top & GH^\top
      \\ 
      HG^\top & HH^\top 
    \end{bmatrix}.
  \end{equation}
  For $x \in \real^n$, $L$ is minimized when
  $\xi = -F(x) - G^\top u - H^\top v$, and therefore
  \begin{equation}
    \label{eq:J-closed-form}
    \begin{aligned}
      \tilde{J}(x; u, v) =
      \frac{1}{2}\norm{
        \begin{bmatrix}
          u \\ v
        \end{bmatrix}}^2_{\tilde{Q}}
        \!\!+ \!
        \begin{bmatrix}
          GF(x) \!-\! \alpha (Gx - c_g)
          \\
          HF(x) \! - \! \alpha (Hx - c_h)
        \end{bmatrix}^\top
        \!\!
        \begin{bmatrix}
        u \\ v
      \end{bmatrix}+ \frac{1}{2}\norm{F(x)}^2. 
    \end{aligned}
  \end{equation}
  If $(u_x, v_x) \in \Lambda_\alpha(x)$, then $(u_x, v_x)$ is a
  solution to the program
  \[\min_{(u, v) \in \real^{m}_{\geq 0} \times \real^{k}} \tilde{J}(x,
  u, v),\] 
  which is the Lagrangian dual\footnote{By convention, the
    Lagrangian dual problem is a maximization problem
    (cf.~\cite[Chapter 5]{SB-LV:09}). However, the minus sign in the
    definition of $\tilde{J}$ ensures that here it is a
    minimization. The reason for this sign convention is to make the
    notation simpler.}  of \eqref{eq:smooth-proj-grad}. 
  By Lemma
  \ref{lem:qp-to-lp}, $(u_x, v_x)$ is also a solution to the linear
  program,
  \[     \underset{(u, v) \in \real^{m}_{\geq 0} \times \real^k}{\textnormal{min}}
  \left(\tilde{Q}\begin{bmatrix}
    u_x \\ v_x
  \end{bmatrix} + \begin{bmatrix} 
    GF(x) - \alpha (Gx - c_g) \\
    HF(x) - \alpha (Hx - c_h)
  \end{bmatrix} \right)^\top \begin{bmatrix}
    u \\ v
  \end{bmatrix}.
  \]
  Since $(u_y, v_y)$ is also feasible for the previous linear program, 
  by optimality of $(u_x, v_x)$,
  \[
    \begin{aligned}
    -&\begin{bmatrix}
      u_x - u_y
      \\
      v_x - v_y
    \end{bmatrix}^\top
    \begin{bmatrix}
      Gx - c_g \\ Hx - c_h
    \end{bmatrix} \leq -\frac{1}{\alpha}\norm{\begin{bmatrix}
      u_x \\ v_x
    \end{bmatrix}}^2_{\tilde{Q}} + \frac{1}{\alpha}\begin{bmatrix}
      u_y \\ v_y
    \end{bmatrix}^\top \tilde{Q}\begin{bmatrix}
      u_x \\ v_x
    \end{bmatrix} -\frac{1}{\alpha}\begin{bmatrix}
      u_x - u_y \\ v_x - v_y
    \end{bmatrix}^\top \begin{bmatrix}
      GF(x) \\ HF(x)
    \end{bmatrix}. 
     \end{aligned}
   \]
   By a similar line of reasoning,
   \[
     \begin{aligned}
    -&\begin{bmatrix}
      u_y - u_x \\ v_y - v_x
    \end{bmatrix}^\top \begin{bmatrix}
      Gy - c_g \\ Hy - c_h
    \end{bmatrix} \leq -\frac{1}{\alpha}\norm{\begin{bmatrix}
      u_y \\ v_y
    \end{bmatrix}}^2_{\tilde{Q}} + \frac{1}{\alpha}\begin{bmatrix}
      u_x \\ v_x
    \end{bmatrix}^\top \tilde{Q}\begin{bmatrix}
      u_y \\ v_y
    \end{bmatrix} -\frac{1}{\alpha}\begin{bmatrix}
      u_y - u_x \\ v_y - v_x
    \end{bmatrix}^\top \begin{bmatrix}
      GF(y) \\ HF(y)
    \end{bmatrix}. 
     \end{aligned}
   \]
  Combining the previous two expressions, we obtain
  \[
    \begin{aligned}
        -\begin{bmatrix}
           u_x - u_y
           \\
           v_x - v_y
         \end{bmatrix}^\top
        \begin{bmatrix}
          G(x - y)
          \\
          H(x - y)
        \end{bmatrix}&\leq -\frac{1}{\alpha}
      \begin{bmatrix}
        u_x - u_y
        \\
        v_x - v_y
      \end{bmatrix}^\top
      \begin{bmatrix}
        G(F(x) - F(y))
        \\
        H(F(x) - F(y))
      \end{bmatrix}
      -\frac{1}{\alpha}
        \norm{
        \begin{bmatrix}
          u_x - u_y
          \\
          v_x - v_y
        \end{bmatrix}}^2_{\tilde{Q}}
      \\
      &\leq \frac{\ell_F}{\alpha}\norm{\begin{bmatrix}
        G \\ H 
      \end{bmatrix}^\top 
        \begin{bmatrix}
          u_x - u_y \\ v_x - v_y
        \end{bmatrix}}
        \norm{x - y} -\frac{1}{\alpha}\norm{\begin{bmatrix}
          G \\ H 
        \end{bmatrix}^\top 
        \begin{bmatrix}
          u_x - u_y
          \\
          v_x - v_y
        \end{bmatrix}}^2 \!\! ,
    \end{aligned}
  \]
  where we used $\|(u, v)\|_{\tilde{Q}} = \norm{[G; H]^\top(u, v)}$.  
  For any $\epsilon > 0$, by Young's
  Inequality~\cite[pp. 140]{HLR-PF:10} it follows that 
  \[
    \begin{aligned}
      -\begin{bmatrix}
          u_x - u_y
          \\
          v_x - v_y
        \end{bmatrix}^\top
        &\begin{bmatrix}
           G(x - y)
           \\
           H(x - y)
         \end{bmatrix}
      \leq \frac{\ell_F}{2\epsilon\alpha}\norm{x - y}^2 -\Big(\frac{1}{\alpha} -
         \frac{\epsilon\ell_F}{2\alpha}\Big) \norm{
          \begin{bmatrix}
            G \\ H
          \end{bmatrix}^\top \begin{bmatrix}
           u_x - u_y
           \\
           v_x - v_y
         \end{bmatrix}}^2 \!\!.
    \end{aligned}
  \]
  Substituting into \eqref{eq:contraction-proof}, we obtain
  \[
    \begin{aligned}
      (x - y)^\top
      &(\Gc_\alpha(x) - \Gc_\alpha(y)) \leq 
        -\Big(\mu - \frac{\ell_F}{2\epsilon\alpha}\Big)
        \norm{x - y}^2 -\Big(\frac{1}{\alpha} -
        \frac{\epsilon\ell_F}{2\alpha}\Big)
        \norm{
          \begin{bmatrix}
            G \\ H
          \end{bmatrix}^\top
        \begin{bmatrix} 
          u_x - u_y
          \\
          v_x - v_y
        \end{bmatrix}}^2.
    \end{aligned}
  \]
  Therefore,~\eqref{eq:is-contracting} holds with
  $c = \mu - \frac{\ell_F}{2\epsilon \alpha}$ if $\epsilon$ satisfies
  \[ \frac{\ell_F}{2 \alpha \mu} \leq \epsilon \leq \frac{2}{\ell_F}. \]
  Such $\epsilon$ can be chosen if $\alpha > \frac{\ell_F^2}{4\mu}$,
  which corresponds to the condition \eqref{eq:contraction-bound}. The
  optimal estimate of the contraction rate is
  $ c = \mu - \frac{\ell_F^2}{4\alpha}$.   
\end{proof}

\begin{remark}[Connection with Safe Gradient Flow]
  \rm{The safe monotone flow is a generalization of the \emph{safe
      gradient flow} introduced in~\cite{AA-JC:24-tac}.  The latter
    was originally studied in the context of nonconvex optimization
    and, similar to the case of the safe monotone flow, enjoys safety
    of the feasible set and correspondence between equilibria and
    critical points. Further, under certain constraint qualifications,
    the local stability of equilibria relative to the constraint set
    under the safe gradient flow can be established using the
    objective function as a Lyapunov function. Because we are working
    with variational inequalities, $F$ may not correspond to the
    gradient of a scalar-valued function, so the Lyapunov functions
    used in~\cite{AA-JC:24-tac} are not directly applicable. However,
    the assumption of convexity and monotonicity here allows us to
    construct novel Lyapunov functions to obtain global stability
    results. \new{The interested reader can check~\cite[Section
      VI]{AA-JC:24-tac} for a comparison of the safe gradient flow
      with other continuous-time methods for optimization.}} \oprocend
\end{remark}

\section{Recursive Safe Monotone Flow}

A drawback of the projected and safe monotone flows is that, in order
to implement them, one needs to solve either the quadratic
programs~\eqref{eq:qp-proj-controller-u-v}
or~\eqref{eq:smf-controller} at each time along the trajectory of the
system.  As a third algorithmic solution to
Problem~\ref{problem:problem}, in this section we introduce the
\emph{recursive safe monotone flow} which gets around this limitation
by incorporating a dynamics whose equilibria correspond to the
solutions of the quadratic program.  
We begin by showing how to derive the dynamics for general constraint
sets $\Cc$ by interconnecting two systems on multiple time scales.
Next, we use the theory of singular perturbations of contracting flows
to obtain stability guarantees in the case where $\Cc$ is polyhedral,
and show that trajectories of the recursive safe monotone flow track
those of the safe monotone flow. The latter property enables us to
formalize a notion of ``practical safety'' that the recursive safe
monotone flow satisfies.

\subsection{Construction of the Dynamics}
We discuss here the construction of the recursive safe monotone flow.
The starting point for our derivation is the control-affine
system~\eqref{eq:variational-system}. The safe monotone flow consists
of this system with a feedback controller specified by the quadratic
program~\eqref{eq:smf-controller}.  Rather than solving this program
exactly, the approach we take is to replace it with a monotone
variational inequality parameterized by the state. For fixed
$x \in X$, we can solve this inequality, and hence obtain the
feedback $\kappa(x)$, using the safe monotone flow corresponding to
this problem. Coupling this flow with the control
system~\eqref{eq:variational-system} yields the~\emph{recursive safe
  monotone flow}. \new{The reason for using the term ``recursive'' is
  that the system can be interpreted as coupling one instance of the
  safe monotone flow (corresponding to the problem
  \eqref{eq:variational-inequality}), with another instance of the
  safe monotone flow (corresponding to the problem~\eqref{eq:qp-cbf-controller}).}

In this section we carry out this strategy in mathematically precise
terms. We rely on the following result, which provides an alternative
characterization of the QP-based
controller~\eqref{eq:smf-controller}.

\begin{lemma}\longthmtitle{Alternative Characterization of Safe Feedback}
  For $x \in \real^n$, consider the optimization
  \begin{equation}
    \label{eq:qp-dual}
    \begin{aligned}
      \underset{(u, v) \in \real_{\geq 0}^m \times
        \real^k}{\textnormal{minimize}} \; &\frac{1}{2}\bigg \|
        \sum_{i=1}^{m}u_i\nabla g_i(x) + 
        \sum_{j=1}^{k}v_j\nabla h_j(x) \bigg \|^2 \\ &+u^\top \bigg(\pfrac{g}{x}F(x) -\alpha g(x)\bigg) + v^\top
        \bigg(\pfrac{h}{x}F(x) -\alpha h(x)\bigg)  .  
    \end{aligned}
  \end{equation}
  If $(u, v)$ is a solution to \eqref{eq:qp-dual}, then $(u, v)$ is a
  solution to \eqref{eq:smf-controller}.
\end{lemma}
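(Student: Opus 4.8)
The plan is to recognize \eqref{eq:qp-dual} as the Lagrangian dual of the projection problem \eqref{eq:smooth-proj-grad} and to show that its first-order optimality conditions, after introducing an auxiliary variable $\xi$, coincide exactly with the system \eqref{eq:KKT_proj}. Once this identification is made, Lemma~\ref{lem:optimality-conditions} immediately yields that any solution $(u,v)$ of \eqref{eq:qp-dual} also solves \eqref{eq:smf-controller}.

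First I would observe that \eqref{eq:qp-dual} is a convex quadratic program in $(u,v)$: its quadratic part $\tfrac{1}{2}\|\pfrac{g}{x}^\top u + \pfrac{h}{x}^\top v\|^2$ is a positive semidefinite quadratic form, while the feasible region $\real^m_{\geq 0}\times\real^k$ is cut out by the affine constraint $u \geq 0$. Because affine constraints automatically satisfy a constraint qualification, the KKT conditions of \eqref{eq:qp-dual} are both necessary and sufficient for optimality, so any solution $(u,v)$ satisfies them.

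Next I would set $\xi := -F(x) - \pfrac{g}{x}^\top u - \pfrac{h}{x}^\top v$, which is precisely \eqref{eq:dlagrangian}. Differentiating the objective of~\eqref{eq:qp-dual} and using the identity $\pfrac{g}{x}^\top u + \pfrac{h}{x}^\top v + F(x) = -\xi$, the gradient in the $u$-block simplifies to $-(\pfrac{g}{x}\xi + \alpha g(x))$ and the gradient in the $v$-block to $-(\pfrac{h}{x}\xi + \alpha h(x))$. The KKT conditions of \eqref{eq:qp-dual}---stationarity in $v$, nonnegativity of the $u$-gradient, $u \geq 0$, and complementary slackness between $u$ and its gradient---then translate term by term into \eqref{eq:pequality}, \eqref{eq:pinequality}, \eqref{eq:nonnegativity}, and \eqref{eq:transversality}, respectively. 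Hence $(\xi, u, v)$ solves the full system \eqref{eq:KKT_proj}, and Lemma~\ref{lem:optimality-conditions} (applicable under the standing MFCQ hypothesis) concludes that $(u,v)$ solves \eqref{eq:smf-controller}.

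I do not expect a substantial obstacle here: the only real care required is in the sign bookkeeping that matches the dual stationarity conditions to the primal complementarity system, via the substitution $\pfrac{g}{x}^\top u + \pfrac{h}{x}^\top v + F(x) = -\xi$. An alternative route would invoke strong duality for the convex QP \eqref{eq:smooth-proj-grad}, identifying its dual optimizers with the Lagrange-multiplier set $\Lambda_\alpha(x)$ and then applying Lemma~\ref{lem:optimality-conditions}; this also works but requires separately verifying Slater's condition, which the direct comparison of KKT systems sidesteps.
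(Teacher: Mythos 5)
Your proposal is correct and follows essentially the same route as the paper: both derive the KKT conditions of \eqref{eq:qp-dual} (necessary and sufficient by convexity plus an automatic constraint qualification for the affine constraint $u \geq 0$) and match them, after the substitution $\xi = -F(x) - \pfrac{g}{x}^\top u - \pfrac{h}{x}^\top v$, with the system \eqref{eq:KKT_proj}. If anything, your explicit final appeal to Lemma~\ref{lem:optimality-conditions} is slightly tidier than the paper's proof, which stops at the observation that $(u,v) \in K_{\textnormal{cbf}, \alpha}(x)$ (mere feasibility) and leaves the step to optimality for \eqref{eq:smf-controller} implicit.
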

\begin{proof}
  Note that the constraints of \eqref{eq:qp-dual} satisfy MFCQ for all
  $(u, v) \in \real^{m}_{\geq 0} \times \real^{k}$. Since the
  objective function in \eqref{eq:qp-dual} is convex in $(u, v)$, one
  can see that necessary and sufficient conditions for optimality are
  given by a KKT system that, after some manipulation, takes the form
  \[
    \begin{aligned}
      -\pfrac{g}{x}\pfrac{g}{x}^\top u -\pfrac{g}{x}\pfrac{h}{x}^\top
      v -\pfrac{g}{x}F(x) -\alpha g(x) &\leq 0
      \\
      -\pfrac{h}{x}\pfrac{g}{x}^\top u - \pfrac{h}{x}\pfrac{h}{x}^\top
      v -\pfrac{h}{x}F(x) -\alpha h(x) &= 0 \\
      u &\geq 0 \\ 
      u^\top \bigg(-\pfrac{g}{x}\pfrac{g}{x}^\top u -\pfrac{g}{x}\pfrac{h}{x}^\top
      v -\pfrac{g}{x}F(x) -\alpha g(x) \bigg) &= 0.
    \end{aligned} 
  \]
  It follows immediately that if $(u, v)$ satisfies the above
  equations, then $(u, v) \in K_{\textnormal{cbf}, \alpha}(x)$. 
\end{proof}

The rationale for considering \eqref{eq:qp-dual}, rather than working
with \eqref{eq:smf-controller} directly, is that the constraints of
\eqref{eq:qp-dual} are independent of~$x$, which will be important for
reasons we show next. Being a constrained optimization problem,
\eqref{eq:qp-dual} can be expressed in terms of a variational
inequality (parameterized by $x \in \real^n$). Formally, let
$\tilde{F}(x, u, v)$ be given by
\begin{align}\label{eq:tildeF}
  \tilde{F}(x, u, v) =
  \begin{bmatrix}
    -\pfrac{g}{x}\Fc(x, u, v) - \alpha g(x)
    \\
    -\pfrac{h}{x}\Fc(x, u, v) -\alpha h(x)
  \end{bmatrix},  
\end{align}
where $\Fc$ is given by~\eqref{eq:variational-system} and let
$\tilde{\Cc} = \real^{m}_{\geq 0} \times \real^{k}$, which we 
parameterize as
\begin{equation}
  \label{eq:tildeC}
  \tilde{\Cc} = \{ (u, v) \in \real^{m} \times \real^{k} \mid u \geq 0 \}.
\end{equation}
The optimization
problem~\eqref{eq:qp-dual} corresponds to the variational inequality
$\VI(\tilde{F}(x, \cdot, \cdot), \tilde{\Cc})$.

Our next step is to write down the safe monotone flow with safety
parameter $\sigma > 0$ corresponding to the variational inequality
$\VI(\tilde{F}(x, \cdot), \tilde{\Cc})$. Note that the
$\sigma$-restricted tangent set~\eqref{eq:approximate-tangent-cone} of
$\tilde{\Cc}$ is
\[
  T^{(\sigma)}_{\tilde{\Cc}}(u, v) =\{
    (\xi_u, \xi_v)
  \in
  \real^{m} \times \real^{k} \;|\; \xi_u \geq -\sigma u\}.
\]
The projection onto $T^{(\sigma)}_{\tilde{\Cc}}(u, v)$ has the
following closed-form solution:
\[
  \proj{T^{(\sigma)}_{\tilde{\Cc}}(u, v)}{
    \begin{bmatrix}
      a \\ b
    \end{bmatrix}
  }= 
  \begin{bmatrix}
    \max\{ -\sigma u, a \}
    \\
    b
  \end{bmatrix}.
\]
Using this expression and applying
Proposition~\ref{prop:smf-equivalent-formulation}, we write the 
safe monotone flow corresponding to 
the variational inequality~$\VI(\tilde{F}(x, \cdot), \tilde{\Cc})$ as
\begin{equation}
  \label{eq:smf-inner}
  \begin{aligned}
    \dot{u} &= \max\bigg\{ -\sigma u, \pfrac{g(x)}{x}\Fc(x, u, v)
    +\alpha g(x) \bigg\} \\
    \dot{v} &= \pfrac{h(x)}{x}\Fc(x, u, v) +\alpha h(x).
  \end{aligned}
\end{equation}
Under certain assumptions, which we formalize in the sequel, for a
fixed $x$, trajectories of~\eqref{eq:smf-inner} converge to solutions
of the~QP~\eqref{eq:smf-controller}.

This discussion suggests a system solving the original variational
inequality $\VI(F, \Cc)$ can be obtained by
coupling~\eqref{eq:smf-inner} with the
dynamics~\eqref{eq:variational-system} as follows:
\begin{subequations}
  \label{eq:recursive-smf}
  \begin{align}
    \dot{x} &= \Fc(x, u, v) \label{eq:slow}
    \\
    \tau\dot{u} &= \max\bigg\{ -\sigma u, \pfrac{g(x)}{x}\Fc(x, u, v)
                  +\alpha g(x) \bigg\} \label{eq:fast1}
    \\ 
    \tau\dot{v} &= \pfrac{h(x)}{x}\Fc(x, u, v)
                  +\alpha h(x). \label{eq:fast2}
  \end{align}
\end{subequations}
We refer to the system \eqref{eq:recursive-smf} as the \emph{recursive
  safe monotone flow}. 
The parameter $\tau$ characterizes the
separation of timescales between the system \eqref{eq:slow} and
\eqref{eq:fast1}-\eqref{eq:fast2}. The interpretation of the dynamics
is that, when~$\tau$ is sufficiently small,
\eqref{eq:fast1}-\eqref{eq:fast2} evolve on a much faster timescale
and rapidly approach the solution set 
of~\eqref{eq:smf-controller}. The system on the slower 
timescale~\eqref{eq:slow} then approximates the safe monotone flow. We formalize
this analysis next.

\new{
  \begin{remark}\longthmtitle{Relationship to Primal-Dual Flows}
    {\rm 
    This recursive safe monotone flow is closely related to
    primal-dual flows, which seek a solution to \eqref{eq:KKT} by
    flowing along a monotone operator in the primal variable, and
    performing a gradient ascent in the dual variables:
    \begin{subequations}
      \label{eq:saddle-flow}
      \begin{align}
        \dot{x} &= -F(x) - \pfrac{g(x)}{x}^\top u -
                  \pfrac{h(x)}{x}^\top v
        \\ 
        \dot{u} &= \proj{T_{\real^{m}_{\geq 0}}(u)}{g(x)}
        \\
        \dot{v} &= h(x)
      \end{align}
    \end{subequations}
    This method has been well-studied
    classically~\cite{KA-LH-HU:58,TK:56}, and has attracted recent
    attention due to its suitability for distributed implementation on
    network problems~\cite{DF-FP:10, AC-BG-JC:17-sicon,
      AC-EM-SHL-JC:18-tac, JC-SKN:19-jnls}.  One drawback of
    \eqref{eq:saddle-flow} is that it does not ensure that the primal
    variable remains in the feasible set $\Cc$. The recursive monotone
    flow can be interpreted as a primal-dual flow which overcomes this
    limitation, and provides practical safety guarantees, by modifying
    the dual-ascent term. One can show that the right-hand side 
    of~\eqref{eq:recursive-smf} converges exactly to the right-hand side
    of~\eqref{eq:saddle-flow} by letting
    $\sigma = \frac{1}{\epsilon^2}$, $\alpha = \frac{1}{\epsilon}$, and
    $\tau = \frac{1}{\epsilon}$, and taking the limit as
    $\epsilon \to 0$. Future work will study further connections of
    the recursive safe monotone flow with primal-dual flows.  }
  \oprocend
  \end{remark}
}

\subsection{Stability of Recursive Safe Monotone Flow}
To prove stability of the system \eqref{eq:recursive-smf}, we rely on
results from contraction theory \cite{AD-SJ-FB:22}. Specifically,
we derive conditions on the timescale separation $\tau$ that ensures
that~\eqref{eq:recursive-smf} is contracting and, as a consequence,
globally attractive and locally exponentially stable.  Throughout the
section, we assume the following assumption holds.

\begin{assumption}\longthmtitle{Strong Monotonicity, Lipschitzness,
    and Polyhedral Constraints}
  \label{as:rmsf-stability}
  The following holds:
  \begin{enumerate}
  \item $F$ is $\mu$-strongly monotone and $\ell_F$-Lipschitz;
  \item $\Cc$ is a polyhedral set defined by \eqref{eq:constraint-set}
    with $g(x) = Gx - c_{g}$ and $h(x) = Hx - c_{h}$, \new{and the
      matrix $[G^\top, H^\top ]^\top$ has full rank\footnote{\new{This
        assumption is slightly stronger than the Linear Independence
        Constraint Qualification (LICQ) assumption. LICQ requires
        linear independence of the gradients of only the active
        constraints, whereas here we require linear independence of
        the gradients of all constraints.}}.
      }
  \end{enumerate}
\end{assumption}

Next, we show that it is possible to tune the parameters 
$\sigma$ so the system
\eqref{eq:smf-inner} is contracting, uniformly in $x$.

\begin{lemma}\longthmtitle{Contractivity of \eqref{eq:smf-inner}}
  \label{lem:inner-contraction}
  Under Assumption~\ref{as:rmsf-stability}, if
  $ 
  \sigma >
    \frac{1}{4}\frac{\lambdamax(\tilde{Q})}{\lambdamin(\tilde{Q})},
  $
  then the system \eqref{eq:smf-inner} is contracting with rate
  $\bar{c} = \lambdamin(\tilde{Q}) - \frac{\lambdamax(\tilde{Q})}{4
    \sigma}$ uniformly in $x$.
\end{lemma}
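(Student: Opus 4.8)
The plan is to recognize that \eqref{eq:smf-inner} is itself a safe monotone flow---with safety parameter $\beta$---for the parameterized variational inequality $\VI(\tilde{F}(x, \cdot), \tilde{\Cc})$ of \eqref{eq:tildeF}--\eqref{eq:tildeC}, and then to run the contraction argument of Theorem~\ref{thm:contraction} on this inner problem. Under Assumption~\ref{as:rmsf-stability}(ii) the map $w = (u, v) \mapsto \tilde{F}(x, w)$ is affine, with $\tilde{F}(x, w_1) - \tilde{F}(x, w_2) = \tilde{Q}(w_1 - w_2)$, where $\tilde{Q}$ is the constant matrix in \eqref{eq:tildeQ}; the full-rank hypothesis on $[G^\top, H^\top]^\top$ guarantees $\tilde{Q} \succ 0$. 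Hence $\tilde{F}(x, \cdot)$ is $\lambdamin(\tilde{Q})$-strongly monotone and $\lambdamax(\tilde{Q})$-Lipschitz, and the constraint set $\tilde{\Cc}$ is polyhedral. Crucially, both $\tilde{Q}$ and $\tilde{\Cc}$ are independent of $x$---the state enters $\tilde{F}(x, w)$ only through an additive offset that cancels in differences---so any one-sided bound obtained this way will automatically be uniform in $x$, which is what the lemma requires.

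Denoting by $R(x, w)$ the right-hand side of \eqref{eq:smf-inner}, the first step is to express it through its multiplier representation, exactly as $\Gc_\alpha$ was written in the proof of Theorem~\ref{thm:contraction}. By Proposition~\ref{prop:smf-equivalent-formulation} applied to the inner problem, $R(x, \cdot)$ is the projection of $-\tilde{F}(x, w)$ onto $T^{(\beta)}_{\tilde{\Cc}}(w)$; writing the KKT system of this projection yields $R(x, w) = -\tilde{F}(x, w) + (\lambda_w, 0)$, where $\lambda_w \geq 0$ is the multiplier of the single inequality $u \geq 0$ and satisfies $\lambda_w = [\,\tilde{F}_u(x, w) - \beta u\,]_+$ componentwise. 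For two states $w_1, w_2$ sharing the same $x$, this gives $R(x, w_1) - R(x, w_2) = -\tilde{Q}(w_1 - w_2) + (\Delta\lambda, 0)$ with $\Delta\lambda = \lambda_{w_1} - \lambda_{w_2}$, and therefore $(w_1 - w_2)^\top(R(x, w_1) - R(x, w_2)) = -\norm{w_1 - w_2}^2_{\tilde{Q}} + \Delta u^\top \Delta\lambda$, where $\Delta u$ is the $u$-block of $w_1 - w_2$.

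The main obstacle is controlling the coupling term $\Delta u^\top \Delta\lambda$ produced by the $\max$ nonlinearity. Here I would invoke Lemma~\ref{lem:qp-to-lp} to characterize $\lambda_{w_1}, \lambda_{w_2}$ as solutions of the associated linear programs and, comparing the two optimality inequalities (the LP step of Theorem~\ref{thm:contraction}), deduce the monotonicity relation $(\Delta\tilde{F}_u - \beta \Delta u)^\top \Delta\lambda \geq \norm{\Delta\lambda}^2$, whence $\Delta u^\top \Delta\lambda \leq \tfrac{1}{\beta}\bigl(\Delta\tilde{F}_u^\top \Delta\lambda - \norm{\Delta\lambda}^2\bigr)$, with $\Delta\tilde{F}_u$ the $u$-block of $\tilde{Q}(w_1 - w_2)$. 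I would then bound $\Delta\tilde{F}_u^\top \Delta\lambda \leq \norm{\tilde{Q}(w_1 - w_2)}\,\norm{\Delta\lambda} \leq \sqrt{\lambdamax(\tilde{Q})}\,\norm{w_1 - w_2}_{\tilde{Q}}\,\norm{\Delta\lambda}$ and apply Young's inequality to balance this against the $-\tfrac{1}{\beta}\norm{\Delta\lambda}^2$ term, eliminating $\Delta\lambda$ and leaving a multiple of $\norm{w_1 - w_2}^2_{\tilde{Q}}$. Combining with $-\norm{w_1 - w_2}^2_{\tilde{Q}}$ and using $\norm{w_1 - w_2}^2_{\tilde{Q}} \geq \lambdamin(\tilde{Q})\norm{w_1 - w_2}^2$ produces the one-sided estimate $(w_1 - w_2)^\top(R(x, w_1) - R(x, w_2)) \leq -\bar{c}\norm{w_1 - w_2}^2$, from which contraction at rate $\bar{c}$ follows by \cite[Theorem~31]{AD-SJ-FB:22}, as in Theorem~\ref{thm:contraction}. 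The delicate point is tracking constants through the Young step so the final rate lands on $\bar{c} = \lambdamin(\tilde{Q}) - \lambdamax(\tilde{Q})/(4\beta)$ rather than a more conservative expression: the use of the $\tilde{Q}$-seminorm bound on $\Delta\tilde{F}_u$ (rather than the crude $\norm{\tilde{Q}(w_1-w_2)} \leq \lambdamax(\tilde{Q})\norm{w_1-w_2}$) is precisely what prevents an extra factor of $\lambdamax(\tilde{Q})$, and the stated threshold $\beta > \lambdamax(\tilde{Q})/(4\lambdamin(\tilde{Q}))$ is exactly the condition making $\bar{c} > 0$.
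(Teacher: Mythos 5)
Your overall route is the same as the paper's: its proof consists precisely of observing that $(u,v)\mapsto\tilde{F}(x,u,v)$ is $\lambdamin(\tilde{Q})$-strongly monotone and Lipschitz uniformly in $x$ (the $x$-dependent offset cancels in differences, as you note) and then invoking Theorem~\ref{thm:contraction} for the inner variational inequality $\VI(\tilde{F}(x,\cdot),\tilde{\Cc})$ with the polyhedral set $\tilde{\Cc}$. Your multiplier representation $R(x,w)=-\tilde{F}(x,w)+(\lambda_w,0)$ with $\lambda_w=[\tilde{F}_u(x,w)-\beta u]_+$, the relation $(\Delta\tilde{F}_u-\beta\Delta u)^\top\Delta\lambda\geq\norm{\Delta\lambda}^2$ (which also follows directly from firm nonexpansiveness of $[\cdot]_+$, without routing through Lemma~\ref{lem:qp-to-lp}), and the Young step are a correct unpacking of that argument.

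The gap is exactly the step you defer to the end. Carrying out your own accounting with $\Delta w=w_1-w_2$: Young's inequality gives $\Delta\tilde{F}_u^\top\Delta\lambda\leq\tfrac{\lambdamax(\tilde{Q})}{4}\norm{\Delta w}_{\tilde{Q}}^2+\norm{\Delta\lambda}^2$, hence $\Delta u^\top\Delta\lambda\leq\tfrac{\lambdamax(\tilde{Q})}{4\beta}\norm{\Delta w}_{\tilde{Q}}^2$ and $(w_1-w_2)^\top(R(x,w_1)-R(x,w_2))\leq-\bigl(1-\tfrac{\lambdamax(\tilde{Q})}{4\beta}\bigr)\norm{\Delta w}_{\tilde{Q}}^2\leq-\lambdamin(\tilde{Q})\bigl(1-\tfrac{\lambdamax(\tilde{Q})}{4\beta}\bigr)\norm{\Delta w}^2$. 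That is a rate of $\lambdamin(\tilde{Q})\bigl(1-\lambdamax(\tilde{Q})/(4\beta)\bigr)$ under the threshold $\beta>\lambdamax(\tilde{Q})/4$ --- which coincides with the stated $\bar{c}$ and threshold only when $\lambdamin(\tilde{Q})=1$. (A literal application of Theorem~\ref{thm:contraction} with Lipschitz constant $\lambdamax(\tilde{Q})$ instead yields $\lambdamin(\tilde{Q})-\lambdamax(\tilde{Q})^2/(4\beta)$ under $\beta>\lambdamax(\tilde{Q})^2/(4\lambdamin(\tilde{Q}))$.) In the regime $\lambdamin(\tilde{Q})>1$ the stated threshold is weaker than $\lambdamax(\tilde{Q})/4$ and your estimate does not even guarantee a positive rate there. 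To be fair, the paper's own proof bridges the same discrepancy only via the identity $\norm{\tilde{Q}}^2=\lambdamax(\tilde{Q})$, which is the same seminorm sleight of hand; but the claim that ``the constants land on $\bar{c}$'' is the one assertion in your proposal that the computation does not support, and it needs to be either fixed (by stating the rate your argument actually produces) or flagged rather than asserted.
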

\begin{proof}
  By Assumption \ref{as:rmsf-stability}, $\tilde{Q} \succ 0$ so one 
  can show $\tilde{F}$ is (i) $\lambdamin(\tilde{Q})$-strongly
  monotone in $(u, v)$ uniformly in $x$ and (ii)
  $\|\tilde{Q}\|$-Lipschitz in $(u, v)$ uniformly in $x$. By
  Theorem~\ref{thm:contraction}, if
  $\sigma > \frac{\| \tilde{Q} \|^2}{4 \lambdamin(\tilde{Q})}$, the
  system \eqref{eq:smf-inner} is uniformly contracting. The result
  follows by observing that
  $\| \tilde{Q} \|^2 = \lambdamax(\tilde{Q})$.   
\end{proof}

We now characterize the contraction and stability properties of the
recursive safe monotone flow.

\begin{theorem}\longthmtitle{Contractivity of Recursive Safe Monotone
    Flow}
  \label{thm:rsmf-contraction}
  Assume $F$ is $\mu$-strongly monotone and $\ell_F$ globally
  Lipschitz, and $\alpha$ satisfies
  \eqref{eq:contraction-bound}. Under
  Assumption~\ref{as:rmsf-stability} and 
  $\sigma$ chosen as in Lemma~\ref{lem:inner-contraction}, then
  \begin{enumerate}
  \item the unique KKT triple, $(x^*, u^*, v^*)$, corresponding to
    $\VI(F, \Cc)$ is the only equilibrium of \eqref{eq:recursive-smf}.
  \end{enumerate}
  Moreover, for all $\epsilon > 0$, there exists $\tau^* > 0$, such
  that for all $0 < \tau < \tau^*$,
  \begin{enumerate}\addtocounter{enumi}{1}
  \item the system \eqref{eq:recursive-smf} is contracting on the set
    \[
      \begin{aligned}
        \Zc_\epsilon = \big\{ (x, u, v) \in
        &X \times \real^{m} \times \real^k \mid \norm{(u, v) - \kappa(x)} \leq \epsilon \big\} ,
      \end{aligned}
    \]
    and every solution of \eqref{eq:recursive-smf} eventually enters
    $\Zc_\epsilon$ in finite time. In particular, there exists a class
    $\Kc\Lc$ function
    $\beta:\real_{\geq 0} \times \real_{\geq 0} \to \real$ such that for
    every solution~$(x(t), u(t), v(t))$
    \[
      \begin{aligned}
        \norm{\big(u(x(t)), v(x(t))\big) - \kappa(x(t))}  \leq \beta \big(\norm{\big(u(x(0)), v(x(0))\big) - \kappa(x(0))},
        t\big) ;
      \end{aligned}
    \]
  \item the unique KKT triple $(x^*, u^*, v^*)$ is locally
    exponentially stable and globally attracting.
  \end{enumerate}
\end{theorem}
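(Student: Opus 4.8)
The plan is to treat \eqref{eq:recursive-smf} as a singularly perturbed interconnection in standard form, with slow variable $x$ and fast variables $(u,v)$, and to invoke the singular perturbation machinery for contracting systems from~\cite{AD-SJ-FB:22}. First I would dispose of statement~(i). A point $(x^*,u^*,v^*)$ is an equilibrium if and only if the fast dynamics \eqref{eq:fast1}--\eqref{eq:fast2} vanish, which by Proposition~\ref{prop:smf-equivalent-formulation} and Lemma~\ref{lem:optimality-conditions} means $(u^*,v^*)\in\Lambda_\alpha(x^*)$ (equivalently, $(u^*,v^*)$ solves \eqref{eq:smf-controller}), and simultaneously $\dot x=\Fc(x^*,u^*,v^*)=\Gc_\alpha(x^*)=0$. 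By the equilibrium characterization in Theorem~\ref{thm:safe-stability}, the latter forces $x^*\in\SOL(F,\Cc)$, which is a singleton by $\mu$-strong monotonicity, while the full-rank hypothesis in Assumption~\ref{as:rmsf-stability} makes the associated multiplier $(u^*,v^*)$ unique. Hence the unique equilibrium is exactly the KKT triple.

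For the core two-timescale analysis I would identify the two reduced dynamics. Letting $\tau\to 0$ freezes $x$ and yields the boundary-layer system \eqref{eq:smf-inner}, which by Lemma~\ref{lem:inner-contraction} is contracting in $(u,v)$ with rate $\bar c$ uniformly in $x$, and whose unique fixed point is the quasi-steady-state manifold $(u,v)=\kappa(x)$ solving \eqref{eq:smf-controller}. Substituting $(u,v)=\kappa(x)$ into \eqref{eq:slow} and applying Proposition~\ref{prop:smf-equivalent-formulation} gives the reduced dynamics $\dot x=\Fc(x,\kappa(x))=\Gc_\alpha(x)$, i.e., the safe monotone flow, which is contracting with rate $c=\mu-\ell_F^2/(4\alpha)>0$ by Theorem~\ref{thm:contraction} since $\alpha$ satisfies~\eqref{eq:contraction-bound}. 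With both the boundary-layer and reduced systems contracting, and with the interconnection Lipschitz constants bounded ($\Fc$ is affine in $(u,v)$ with constant Jacobian $-[G^\top\, H^\top]$, and $\kappa$ is globally Lipschitz as the solution map of a strongly convex parametric QP with $\tilde Q\succ 0$), the singular perturbation theorem for contracting systems yields a threshold $\tau^*>0$ such that \eqref{eq:recursive-smf} is contracting on the tube $\Zc_\epsilon$ for every $0<\tau<\tau^*$, establishing the contraction claim in~(ii).

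To show that every trajectory enters $\Zc_\epsilon$ in finite time and to obtain the $\Kc\Lc$ bound, I would track the fast error $e=(u,v)-\kappa(x)$. Writing $\Psi$ for the fast field of \eqref{eq:fast1}--\eqref{eq:fast2}, the one-sided contraction estimate $e^\top\Psi(x,u,v)\le -\bar c\,\norm{e}^2$, together with $\Fc(x,u,v)-\Fc(x,\kappa(x))=-[G^\top\, H^\top]e$ and the Lipschitz bound $\norm{\tfrac{d}{dt}\kappa(x)}\le L_\kappa\norm{\dot x}$, gives a differential inequality of the form $\tfrac{d}{dt}\norm{e}\le -(\bar c/\tau-a)\norm{e}+b$, where $a,b$ collect the bounded coupling constants and $b$ is governed by $\norm{\Gc_\alpha(x)}$. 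For $\tau$ small the coefficient $\bar c/\tau-a$ is positive and large, so $\norm{e}$ is driven below $\epsilon$ in finite time, the sublevel set $\{\norm{e}\le\epsilon\}$ is forward invariant, and $\norm{e}$ admits a class-$\Kc\Lc$ estimate in $(\norm{e(0)},t)$; combined with the global convergence of the full state established via contraction, the residual offset vanishes along trajectories, yielding the stated bound.

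Finally, statement~(iii) follows by combining the pieces: contraction on $\Zc_\epsilon$, a neighborhood of the equilibrium that is forward invariant for $0<\tau<\tau^*$, implies the KKT triple is locally exponentially stable, while the finite-time entry into $\Zc_\epsilon$ from arbitrary initial conditions upgrades this to global attractivity. The main obstacle I anticipate is the pervasive nonsmoothness: the $\max$ in \eqref{eq:fast1} and the piecewise-affine solution map $\kappa$ render both vector fields and the slow manifold merely Lipschitz, so the contraction and singular perturbation estimates must be carried out with matrix measures of Clarke generalized Jacobians (equivalently, one-sided Lipschitz/Demidovich conditions) rather than classical derivatives, and the explicit dependence of $\tau^*$ on $c$, $\bar c$, $L_\kappa$, and $\norm{[G^\top\, H^\top]}$ must be extracted while confining the contraction certificate to the tube $\Zc_\epsilon$.
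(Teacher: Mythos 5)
Your proposal is correct and follows essentially the same route as the paper: equilibria are identified with KKT triples (unique by strong monotonicity and the full-rank assumption), the boundary-layer system \eqref{eq:smf-inner} is uniformly contracting by Lemma~\ref{lem:inner-contraction}, the reduced system is the safe monotone flow which is contracting by Theorem~\ref{thm:contraction}, and the conclusion follows from a singular perturbation theorem for contracting systems (the paper invokes Cothren--Bullo--Dall'Anese rather than deriving the fast-error differential inequality by hand as you sketch, but that sketch is just the internal mechanism of the cited result).
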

\begin{proof}
  We begin with (i). By direct examination
  of~\eqref{eq:recursive-smf}, we see that the equilibria correspond
  exactly with triples satisfying \eqref{eq:KKT}. Since the matrix
  $\tilde{Q}$ has full rank, the gradients of all the constraints 
  are linearly independent,
  and hence MFCQ holds on $\Cc$.  Since $F$ is $\mu$-strongly
  monotone, the solution $x^* \in \SOL(F, \Cc)$ is unique and there
  exists a unique Lagrange multiplier $(u^*, v^*)$ such that
  $(x^*, u^*, v^*)$ satisfies~\eqref{eq:KKT}.
  
  To show (ii), we verify that all hypotheses
  in~\cite[Theorem~4]{LC-FB-EDA:23} hold.  First, note that the map
  $x \mapsto \Fc(x, u, v)$ is $\ell_F$-Lipschitz in $x$ uniformly in
  $(u, v)$, and $\norm{[G; H]}$-Lipschitz in $(u, v)$, uniformly in
  $x$. 
  Let $\Hc$ denote the right-hand side of \eqref{eq:smf-inner}. 
  Because $\Hc$ is piecewise affine in $(u, v)$
  and $F$ globally Lipschitz, there exists constants $\ell_{\Hc, x}$,
  $\ell_{\Hc, u,v} > 0$, such that $\Hc$ is $\ell_{\Hc, x}$-Lipschitz
  in $x$ uniformly in $(u, v)$ and $\ell_{\Hc, u,v}$-Lipschitz in
  $(u, v)$ uniformly in $x$. By Lemma~\ref{lem:inner-contraction},
  there exists $\bar{c} > 0$ such that \eqref{eq:smf-inner} is
  $\bar{c}$-contracting, uniformly in $x$. Finally, we note that the
  reduced system corresponding to \eqref{eq:recursive-smf} is
  $\dot{x} = \Gc_\alpha(x)$, which is contracting by
  Theorem~\ref{thm:contraction}.  Thus, all the hypotheses
  of~\cite[Theorem~4]{LC-FB-EDA:23} hold and (ii) follows.  Finally,
  (iii) follows from combining (i) and (ii).  
\end{proof}

\subsection{Safety of Recursive Safe Monotone Flow}
Here we discuss the safety properties of the recursive safe monotone
flow.  In general, even if $x(0) \in \Cc$, 
it is not guaranteed that solutions of the
system \eqref{eq:recursive-smf} satisfy $x(t) \in \Cc$ for $t > 0$.
However, under appropriate conditions, we can show that the system is
``practically safe'', in the sense that $x(t)$ remains in a slightly
expanded form of the original constraint set $\Cc$.

\begin{theorem}\longthmtitle{Practical Safety of Recursive Safe
    Monotone Flow}\label{thm:approx-safety}
  Assume $F$ is $\mu$-strongly monotone and $\ell_F$ globally
  Lipschitz, and $\alpha$ satisfies
  \eqref{eq:contraction-bound}. Under
  Assumption~\ref{as:rmsf-stability} and 
  $\sigma$ chosen as in Lemma~\ref{lem:inner-contraction}, then for all
  $\epsilon > 0$, there exists $\delta > 0$ and $\tau^*$ such that, if
  $0< \tau < \tau^*$, any solution to~\eqref{eq:recursive-smf} with
  \begin{itemize}
    \item $x(0) \in \Cc$; 
    \item $\norm{(u(0), v(0)) - \kappa(x(0))} \leq \delta$;
  \end{itemize}
  satisfies
  $x(t) \in \Cc_\epsilon$ for all $t \geq 0$, where 
  $\Cc_\epsilon  =  \{ x \in \real^n  \mid g(x) \leq \epsilon, \abs{h(x)} \leq \epsilon \}$.
\end{theorem}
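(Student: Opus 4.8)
The plan is to leverage the singular-perturbation tracking estimate established in Theorem~\ref{thm:rsmf-contraction}(ii): the dual error $e(t) := (u(t), v(t)) - \kappa(x(t))$ obeys a class-$\Kc\Lc$ bound, so it can be kept uniformly small for all time by starting close to the manifold $(u, v) = \kappa(x)$. Since the feedback $\kappa(x)$ makes the barrier conditions hold \emph{exactly} (equality for $h$, inequality for $g$), each constraint function evolves along the recursive flow according to a stable scalar dynamics whose only forcing term is proportional to $\norm{e(t)}$. A comparison argument then confines the constraints to an $O(\sup_t \norm{e(t)})$ band around the safe set, which we shrink below~$\epsilon$ by choosing $\delta$ small.

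Concretely, I would first apply Theorem~\ref{thm:rsmf-contraction}(ii) (with any fixed tolerance, e.g.~$1$) to obtain $\tau^* > 0$ and a class-$\Kc\Lc$ function $\sigma$ (denoted so as not to clash with the inner-flow parameter $\beta$) such that, for all $0 < \tau < \tau^*$, every solution satisfies $\norm{e(t)} \le \sigma(\norm{e(0)}, t) \le \sigma(\norm{e(0)}, 0)$ for all $t \ge 0$. Writing $\bar{e} := \sigma(\delta, 0)$, the hypothesis $\norm{e(0)} \le \delta$ then gives $\norm{e(t)} \le \bar{e}$ for all $t$, and $\bar{e} \to 0$ as $\delta \to 0$ since $\sigma(\cdot, 0)$ is class $\Kc$.

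Next I would differentiate the constraints along the slow dynamics $\dot{x} = \Fc(x, u, v)$. Because $\Cc$ is polyhedral, $g, h$ are affine, so $\frac{d}{dt} g(x(t)) = G\Fc(x, u, v)$ and $\frac{d}{dt} h(x(t)) = H\Fc(x, u, v)$. Using that $\Fc$ is affine in $(u, v)$, I split $\Fc(x, u, v) = \Fc(x, \kappa(x)) - G^\top(u - u_\kappa) - H^\top(v - v_\kappa)$, where $(u_\kappa, v_\kappa) = \kappa(x)$. Because $\kappa(x) \in K_{\textnormal{cbf}, \alpha}(x)$, the defining constraints of that set yield $G\Fc(x, \kappa(x)) \le -\alpha g(x)$ componentwise and $H\Fc(x, \kappa(x)) = -\alpha h(x)$. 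Consequently there is a constant $C > 0$, depending only on $G$ and $H$, with
\begin{align*}
  \frac{d}{dt} g_i(x(t)) &\le -\alpha\, g_i(x(t)) + C\norm{e(t)}, \\
  \frac{d}{dt} h_j(x(t)) &= -\alpha\, h_j(x(t)) + d_j(t), \qquad \abs{d_j(t)} \le C\norm{e(t)}.
\end{align*}
Since $x(0) \in \Cc$ forces $g_i(x(0)) \le 0$ and $h_j(x(0)) = 0$, an elementary comparison/variation-of-constants estimate for these scalar linear dynamics yields $g_i(x(t)) \le \frac{C\bar{e}}{\alpha}$ and $\abs{h_j(x(t))} \le \frac{C\bar{e}}{\alpha}$ for all $t \ge 0$. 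It then suffices to choose $\delta$ small enough that $\frac{C\,\sigma(\delta, 0)}{\alpha} \le \epsilon$, which gives $x(t) \in \Cc_\epsilon$ for all $t \ge 0$, as claimed.

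The step I expect to require the most care is the bookkeeping between the two timescales: the tracking bound on $e$ holds only for $\tau < \tau^*$, whereas the constraint-derivative inequalities hold for \emph{every} $\tau$ (they involve only the $x$-equation). I would stress that $\tau$ enters the final estimate solely through $\bar{e}$, so no circularity arises. A secondary point to verify is that the forcing term is genuinely linear in $e$ with a state-independent constant $C$; this is exactly where polyhedrality of $\Cc$ is used, since it makes $\nabla g_i$, $\nabla h_j$ constant and $\Fc$ jointly affine in $(u, v)$, so $C$ may be taken as the larger of the operator norms of $[\,GG^\top\ GH^\top\,]$ and $[\,HG^\top\ HH^\top\,]$.
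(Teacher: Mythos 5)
Your proposal is correct and follows essentially the same route as the paper: the paper likewise views the $x$-equation as the safe monotone flow perturbed by the dual tracking error $(u,v)-\kappa(x)$, invokes the class-$\Kc\Lc$ bound from Theorem~\ref{thm:rsmf-contraction}(ii), and derives exactly your scalar inequalities $\dot g_i \le -\alpha g_i + \lambdamax(\tilde{Q})\norm{e}$ (and the two-sided analogue for $h_j$), only packaging the final comparison step as an input-to-state-safety lemma (Lemma~\ref{lem:issf}, via \cite[Theorem~1]{SK-ADA:18}) rather than writing out the variation-of-constants estimate. The substance is identical, down to the choice of $\delta$ from the gain $\gamma(r)=\alpha^{-1}\lambdamax(\tilde{Q})r$.
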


To prove Theorem~\ref{thm:approx-safety}, we rely on the notion of
input-to-state safety. Consider the system
\begin{equation}
  \label{eq:perturbed-smf}
  \dot{x} = \Gc_\alpha(x) - \sum_{i=1}^{n}e_i(t)\nabla g_i(x) -
  \sum_{j=1}^{m}d_j(t)\nabla h_j(x). 
\end{equation}

This system can be interpreted as the safe monotone flow perturbed by
a disturbance~$(e(t), d(t))$. The set~$\Cc$ is
\emph{input-to-state safe} (ISSf) with respect to
\eqref{eq:perturbed-smf}, with gain $\gamma$, if there exists a class
$\Kc$ function $\gamma$ such that, if
$\gamma(\norm{(e, d)}_\infty) < \epsilon$, then $\Cc_\epsilon$ is
forward invariant under~\eqref{eq:perturbed-smf}. This notion of
input-to-state safety is a slight generalization of the standard
definition, cf.~\cite{SK-ADA:18}, to the case where the safe set is
parameterized by multiple equality and inequality constraints.  We
show next that~\eqref{eq:perturbed-smf} is ISSf.

\begin{lemma}[Perturbed Safe Monotone Flow is ISSf]
  \label{lem:issf}
  Under Assumption~\ref{as:rmsf-stability}, the set $\Cc$
  is input-to-state safe with respect to \eqref{eq:perturbed-smf} with
  gain $\gamma(r) = \frac{\lambdamax(\tilde{Q})}{\alpha} r$, where
  $\tilde{Q}$ is defined in \eqref{eq:tildeQ}
\end{lemma}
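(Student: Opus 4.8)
The plan is to reduce the ISSf property to the forward invariance of the polyhedron $\Cc_\epsilon$ under the perturbed flow \eqref{eq:perturbed-smf} whenever $\gamma(\norm{(e,d)}_\infty) < \epsilon$, and to establish this invariance through Nagumo's Theorem \cite[Theorem~3.1]{FB:99}, exactly as in the unperturbed case. First I would record that, under Assumption~\ref{as:rmsf-stability}, the full row rank of $M = [G^\top, H^\top]^\top$ makes the affine system defining $T^{(\alpha)}_\Cc(x)$ in \eqref{eq:approximate-tangent-cone} solvable for every $x \in \real^n$, so that $X = \real^n$ and, by Theorem~\ref{thm:contraction}, $\Gc_\alpha$ is globally defined and Lipschitz. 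Consequently solutions of \eqref{eq:perturbed-smf} exist, and since $g$ and $h$ are affine the maps $t \mapsto g_i(x(t))$ and $t \mapsto h_j(x(t))$ are absolutely continuous with derivatives available almost everywhere.

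The key structural observation is that the defining inclusion $\Gc_\alpha(x) \in T^{(\alpha)}_\Cc(x)$ is a pointwise fact independent of the disturbance: by \eqref{eq:pinequality}--\eqref{eq:pequality} (equivalently Lemma~\ref{lem:optimality-conditions}), for every $x$ one has $G\Gc_\alpha(x) \le -\alpha g(x)$ componentwise and $H\Gc_\alpha(x) = -\alpha h(x)$. Using $\nabla g_i(x) = G_i^\top$ and $\nabla h_j(x) = H_j^\top$, differentiating the constraints along \eqref{eq:perturbed-smf} gives
\[
  \frac{d}{dt}\begin{bmatrix} g(x)\\ h(x)\end{bmatrix}
  = M\,\Gc_\alpha(x) - \tilde{Q}\begin{bmatrix} e\\ d\end{bmatrix},
\]
where $\tilde{Q} = MM^\top$ is the matrix in \eqref{eq:tildeQ}. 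Thus the drift contributes the contracting terms $-\alpha g(x)$ and $-\alpha h(x)$, while the disturbance enters only through the bounded coupling $\tilde{Q}(e,d)$.

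To invoke Nagumo I would then check that $\Gc_\alpha(x) - G^\top e - H^\top d$ lies in the tangent cone of $\Cc_\epsilon$ at each boundary point. For an index with $g_i(x) = \epsilon$ the corresponding entry of the derivative above is at most $-\alpha\epsilon - (\tilde{Q}(e,d))_i$; for $h_j(x) = \epsilon$ it equals $-\alpha\epsilon - (\tilde{Q}(e,d))_{m+j}$, and for $h_j(x) = -\epsilon$ it equals $\alpha\epsilon - (\tilde{Q}(e,d))_{m+j}$. Each perturbation entry is bounded in magnitude by $\lambdamax(\tilde{Q})\norm{(e,d)}$, which under the gain hypothesis $\gamma(\norm{(e,d)}_\infty) = \frac{\lambdamax(\tilde{Q})}{\alpha}\norm{(e,d)}_\infty < \epsilon$ is strictly smaller than $\alpha\epsilon$. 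Hence every active inequality constraint has strictly negative derivative and every active equality constraint has the inward sign, so the vector field points into $\Cc_\epsilon$ and Nagumo's Theorem yields forward invariance.

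The main obstacle I anticipate is the norm bookkeeping needed to pass from the coupled cross-terms $GH^\top$ and $HG^\top$ in $\tilde{Q}$ to the single constant $\lambdamax(\tilde{Q})$ in the gain, together with care in the nonsmooth setting: since $\Gc_\alpha$ is merely Lipschitz, the inclusion $\Gc_\alpha(x) \in T^{(\alpha)}_\Cc(x)$ must be used as a pointwise statement and the invariance argument phrased via Dini derivatives of the constraint functions. A secondary point is to confirm that $\gamma$, built from $\lambdamax(\tilde{Q})$, is a genuine class $\Kc$ function and that $\Cc_\epsilon$ stays within the domain of $\Gc_\alpha$, which here is immediate since $X = \real^n$.
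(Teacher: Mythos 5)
Your proposal is correct and follows essentially the same route as the paper: both differentiate each constraint along the perturbed flow, use $\Gc_\alpha(x)\in T^{(\alpha)}_\Cc(x)$ to extract the decay terms $-\alpha g_i(x)$ and $-\alpha h_j(x)$, and bound the disturbance contribution by $\lambdamax(\tilde{Q})\norm{(e,d)}$. The only difference is cosmetic: the paper closes by citing the barrier-function ISSf result \cite[Theorem~1]{SK-ADA:18} for each constraint and intersecting the resulting safe sets, whereas you verify the Nagumo condition directly on the boundary of $\Cc_\epsilon$ — both are valid and yield the same gain.
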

\begin{proof}
  For $i \in \until{m}$, under~\eqref{eq:perturbed-smf}
  \[
    \begin{aligned}
      \dot{g}_i(x) & = G_i^\top \bigg(\Gc_\alpha(x) - \sum_{i=1}^{n}e_i(t)\nabla g_i(x) -
        \sum_{j=1}^{m}d_j(t)\nabla h_j(x) \bigg) \notag
      \\
      & \leq -\alpha g_i(x) - G^\top_i \bigg(\sum_{i=1}^{n}e_i(t)\nabla g_i(x) +
        \sum_{j=1}^{m}d_j(t)\nabla h_j(x)\bigg) \notag
      \\
      & \leq -\alpha g_i(x) + \lambdamax(\tilde{Q}) \norm{(e(t), d(t))} ,
    \end{aligned}
  \]
  where $G^\top_i$ is the $i$th row of $G$.  It follows from
  \cite[Theorem~1]{SK-ADA:18} that the set
  $\Cc_{g_i} = \{x \in \real^n \mid G_i^\top x - (c_g)_i \leq 0 \}$ is
  input-to-state safe with gain $\gamma$
  with respect to~\eqref{eq:recursive-smf}.

  For $j \in \until{k}$, under~\eqref{eq:perturbed-smf},
  \[
    \begin{aligned}
      \dot{h}
      &_j(x) = H_j^\top \bigg(\Gc_\alpha(x) - \sum_{i=1}^{n}e_i(t)\nabla g_i(x) -
        \sum_{j=1}^{m}d_j(t)\nabla h_j(x) \bigg)
      \\
      &= -\alpha h_j(x) - H_j^\top \bigg(\sum_{i=1}^{n}e_i(t)\nabla g_i(x) +
        \sum_{j=1}^{m}d_j(t)\nabla h_j(x) \bigg) ,
    \end{aligned}
  \]
  where $H^\top_j$ is the $j$th row of $H$. It follows that
  \[
    \begin{aligned}
      \dot{h}_j(x)
      &\leq -\alpha h_j(x) +
        \lambdamax(\tilde{Q})\norm{(e(t), d(t))} ,
      \\ 
      \dot{h}_j(x)
      &\geq -\alpha h_j(x) - \lambdamax(\tilde{Q})\norm{(e(t), d(t))} .
    \end{aligned}
  \]
  Thus, by \cite[Theorem~1]{SK-ADA:18}, the sets
  $\Cc^{-}_{h_j} = \{x \in \real^n \mid H_j^\top x - (c_h)_j \leq 0
  \}$, and
  $\Cc^{+}_{h_j} = \{x \in \real^n \mid H_j^\top x - (c_h)_j \geq 0
  \}$ are also input-to-state safe with gain $\gamma$ with respect to
  \eqref{eq:recursive-smf}.  Finally, input-to-state safety of $\Cc$
  follows from the fact that
  \[
    \Cc = \bigg(\bigcap_{i = 1}^{m}\Cc_{g_i}\bigg) \cap \bigg(\bigcap_{j
      = 1}^{k}(\Cc^{+}_{h_j} \cap \Cc^{-}_{h_j})\bigg).  
  \] 
\end{proof}

We are now ready to prove Theorem~\ref{thm:approx-safety}.

\begin{proof}[Proof of Theorem~\ref{thm:approx-safety}]
  By Lemma~\ref{lem:issf}, $\Cc$ is input-to-state safe with respect
  to \eqref{eq:perturbed-smf}, with gain
  $\gamma(r) = \frac{\lambdamax(\tilde{Q})}{\alpha} r$.  Note that, for any solution
  $(x(t), u(t), v(t))$ of \eqref{eq:recursive-smf}, the trajectory
  $x(t)$ solves \eqref{eq:perturbed-smf} with
  \[
    \begin{bmatrix}
      e(t)
      \\
      d(t)
    \end{bmatrix}
    =
    \begin{bmatrix}
      u(t)
      \\
      v(t)
    \end{bmatrix}
    - \kappa(x(t)).
  \]
  Next, by Theorem~\ref{thm:rsmf-contraction}, for all $\epsilon$,
  there exists $\tau^* > 0$ such that if $0 < \tau < \tau^*$, then for
  all $t \geq 0$,
  $
    \norm{\big( e(t), d(t) \big)} \leq \beta \big( \norm{ \big(e(0),
      d(0) \big) }, t \big)
  $
  for some class $\Kc\Lc$ function $\beta$. Now, choose $\delta > 0$
  such that $\alpha^{-1}\lambdamax(\tilde{Q})\beta(\delta, 0) < \epsilon$ and let
  $\norm{(u(0), v(0)) - \kappa(x(0))} \leq \delta$.  Then, for all
  $t \geq 0$,
  \[
    \begin{aligned}
      \gamma \left( \norm{\big( e(t), d(t) \big)} \right)
      &\leq
        \gamma(\beta(\delta,
        t)) \leq
        \gamma(\beta(\delta,
        0)) <
        \epsilon .
    \end{aligned}
  \]
  Hence, for $x(0) \in \Cc \subset \Cc_\epsilon$, since $\Cc$ is
  input-to-state safe with respect to \eqref{eq:perturbed-smf}, we
  conclude $x(t) \in \Cc_\epsilon$ for all~$t \geq 0$.  
\end{proof}

\begin{remark}[Implementation of Recursive Safe Monotone Flow 
  on Network Optimization Problem] {\rm One drawback of the
  projected monotone flow and the safe monotone flow is that neither
  system admits a distributed implementation for network
  optimization problems.  The reason for this is that both the
  feedback control and projected implementations of these systems
  introduce additional couplings
  between agents that require global knowledge to solve exactly.

  We show here that the recursive safe monotone flow can be
  implemented in a distributed manner on constrained optimization
  problems with a separable objective function and locally expressible
  constraints, provided that agents can communicate with their two-hop
  neighbors. Consider an undirected network $(\Vc, \Ec)$ where
  $\Vc = \{1, 2, \dots, N\}$ is a set of agents, and $\Ec$ is the set
  of edges.  For each agent $i \in \Vc$, let $\Nc_i$ denote the set of
  neighbors of $i$.  Consider
  \begin{equation}\label{eq:opt}
    \begin{aligned} 
      &\underset{x \in \real^{n}}{\textnormal{minimize}} &&f(x) =
      \sum_{i \in \Vc}f_i(x_i)
      \\
      &\textnormal{subject to} &&\sum_{j \in \Nc_i}G_{ij}x_j \leq c_{g_i},
      \;\;\forall i \in \Vc
    \end{aligned}
  \end{equation}
  where $x_i$ corresponds to the decision variable of the $i$th agent. 
  For problems with this structure, 
  the constraints corresponding to each 
  node are expressible as a function of information available to that node. 
  This assumption is common in the distributed optimization literature
  (see e.g. \cite{SB-NP-EC-BP-JE:11}). 
  When implementing \eqref{eq:recursive-smf}, 
  the state of the $i$th agent consists of $(x_i, u_i)$, and 
  the dynamics are given by 
  \[
    \begin{aligned}
      \dot{x_i} &= -\nabla f_i(x_i) - \sum_{j \in \Nc_i}G_{ij}^\top u_j \\
      \tau\dot{u_i} &= \text{max}\bigg\{ -\sigma u_i, \sum_{j \in \Nc_i}G_{ij} \bigg( -\nabla f_j(x_j) - \sum_{k \in \Nc_j} G_{jk}^\top u_k  \bigg) + \alpha \bigg( \sum_{j \in \Nc_i}G_{ij}x_j - c_{g_i} \bigg) \bigg\}
    \end{aligned} 
  \]
  Observe that from the previous expression, 
  the $i$th node can implement the dynamics corresponding to its state using 
  only the state information of its two-hop neighbors.  }  \oprocend
\end{remark}

\section{Numerical Examples}
Here we illustrate the behavior of the proposed flows on two example
problems.  The first example is a variational inequality on $\real^2$
corresponding to a two-player game with quadratic payoff functions.
The second example is a constrained linear-quadratic dynamic game
where we implement the safe monotone flow in a receding horizon manner
to examine its anytime properties.

\subsection{Nash Equilibria of Two-Player Game}
The first numerical example we discuss is a variational inequality on
$\real^2$ corresponding to a two-player game, where player
$i \in \{1, 2 \}$ wants to minimize a cost $J_i(x_1, x_2)$ subject to
the constraints that $x_i \in \Cc_i \subset \real$.  We take
$\Cc = \Cc_1 \times \Cc_2 \subset \real^2$.  We have selected a
two-dimensional example that allows us to visualize the constraint set
and the trajectories of the proposed flows to better illustrate their
differences.
The problem of finding the Nash equilibria of a game of this form is
equivalent to the variational inequality $\VI(F, \Cc)$, where $F$ is
the \emph{pseudogradient} map, given by
$ F(x) = ( \nabla_{x_1}J_1(x_1, x_2), \nabla_{x_2}J_2(x_1, x_2)) $.
For $i \in \{1, 2 \}$, let
$\Cc_i = \{ x \in \real \mid -1 \leq x \leq 1 \}$ and $J_i$ be the
quadratic function
$ J_i(x_1, x_2) = \frac{1}{2}x^\top Q_i x + r_i^\top x$, with
\[
  \begin{aligned}
    &Q_1 =
      \begin{bmatrix}
        1 & -1 \\ -1 & 1
      \end{bmatrix}, \qquad
      &Q_2 = \begin{bmatrix}
        1 & 1 \\ 1 & 1
      \end{bmatrix},
      \qquad
    &&r_1 =
       \begin{bmatrix}
         0 \\ 0 
       \end{bmatrix}
      \qquad
    &&r_2
       = \begin{bmatrix}
           0.5 \\ 0.5
         \end{bmatrix}.
  \end{aligned}
\]
The pseudogradient map is given by $F(x) = Qx + r$ where 
\[ 
Q = \begin{bmatrix}
  1 & -1 \\ 1 & 1
\end{bmatrix}  \qquad r = \begin{bmatrix}
  0 \\ 0.5
\end{bmatrix}
\]
Because $\frac{1}{2}(Q + Q^\top) = I \succ 0$, it 
follows that the $F$ is $1$-strongly monotone, and
therefore the problem has a unique solution $x^* \in \SOL(F, \Cc)$.

Figure \ref{fig:toy-example} shows the results of implementing each of
the proposed flows to find the Nash equilibrium.  The projected
monotone flow, cf. Figure~\ref{fig:toy-example}(a), is only 
well-defined in $\Cc$. However, the constraint set remains forward
invariant and all trajectories converge to the solution $x^*$.  The
safe monotone flow with $\alpha = 1.0$, cf.
Figure~\ref{fig:toy-example}(b), also keeps the constraint set forward
invariant and has all trajectories converge to $x^*$. In addition, the
system is well-defined outside of $\Cc$, and trajectories beginning
outside the feasible set converge to it.

In Figure~\ref{fig:toy-example}(c), we consider the recursive safe
monotone flow with $\alpha = 1.0$, $\sigma = 1.0$ and $\tau = 0.25$,
where $u(0) = 0$. The trajectories converge to $x^*$ and closely
approximate the trajectories of the safe monotone flow.
Note, however, that the set $\Cc$ is not safe but only practically
safe. This is illustrated in the zoomed-in
Figure~\ref{fig:toy-example}(d), where it is apparent that the
trajectories do not always remain in~$\Cc$ but remain close to~it.

\begin{figure}[!htb]
  \centering \subfigure[Projected monotone flow]{
    \includegraphics[width=0.44\linewidth]{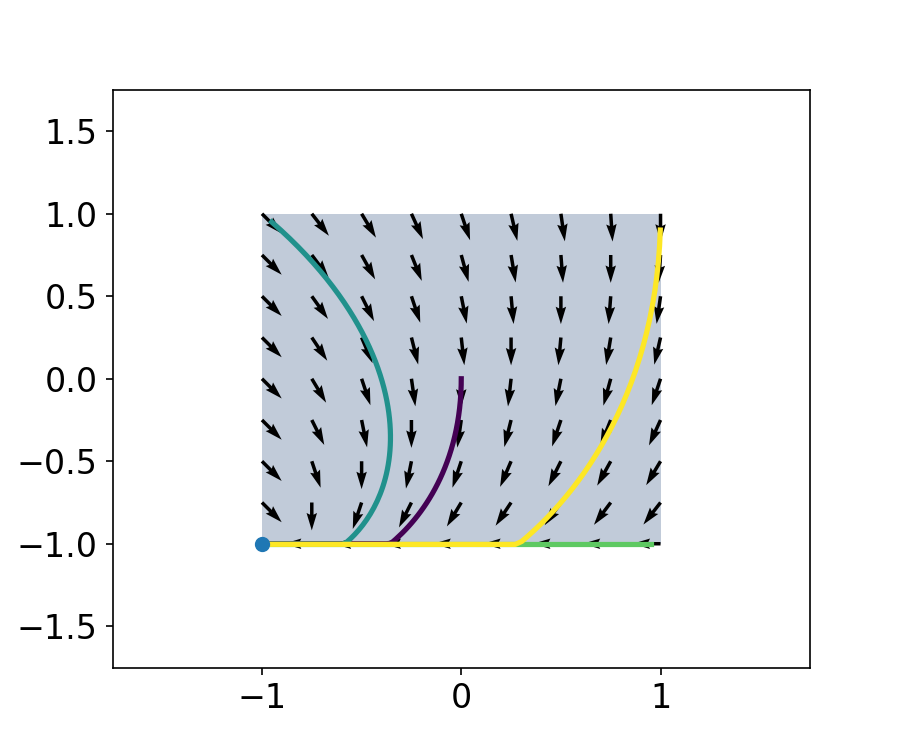}}
  \subfigure[Safe monotone flow]{
    \includegraphics[width=0.44\linewidth]{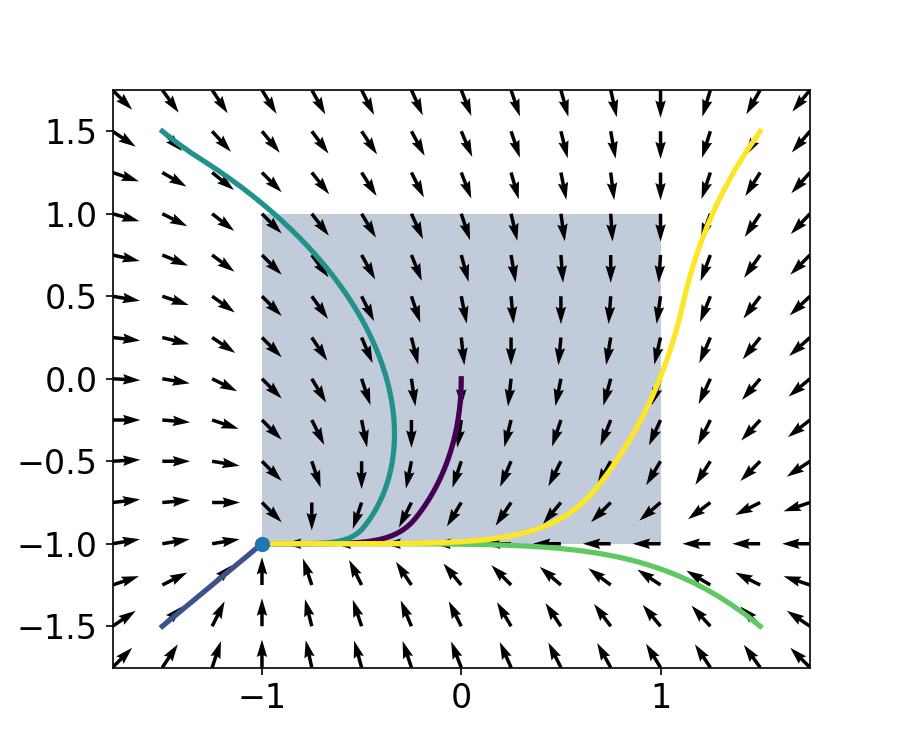}}
  \\
  \subfigure[Recursive safe monotone flow]{
    \includegraphics[width=0.44\linewidth]{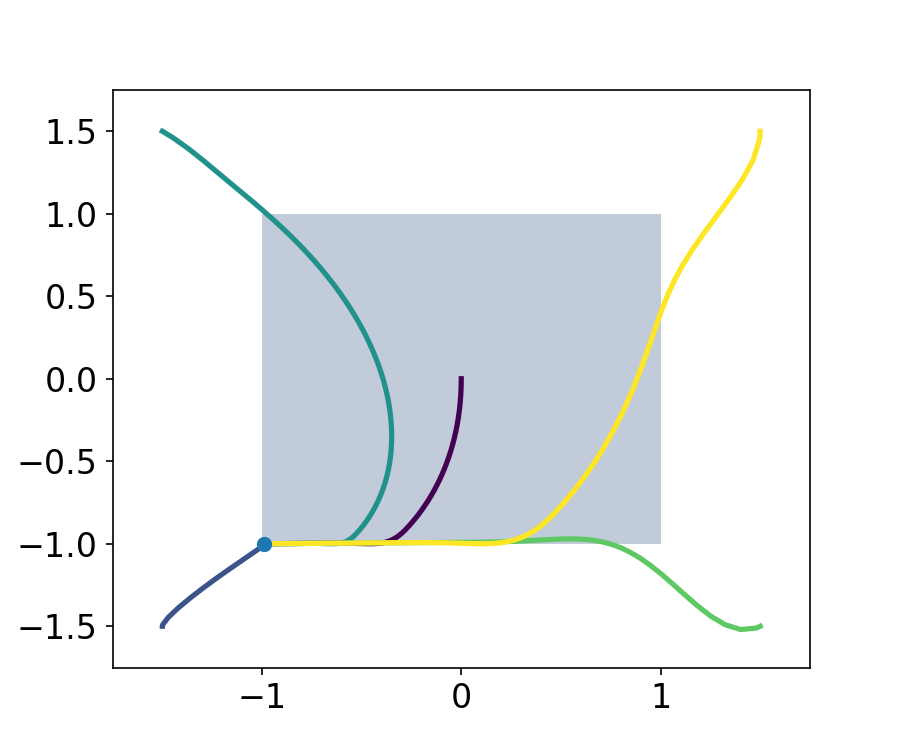}}
  \subfigure[Zoomed-in plot of (c)]{
    \includegraphics[width=0.44\linewidth]{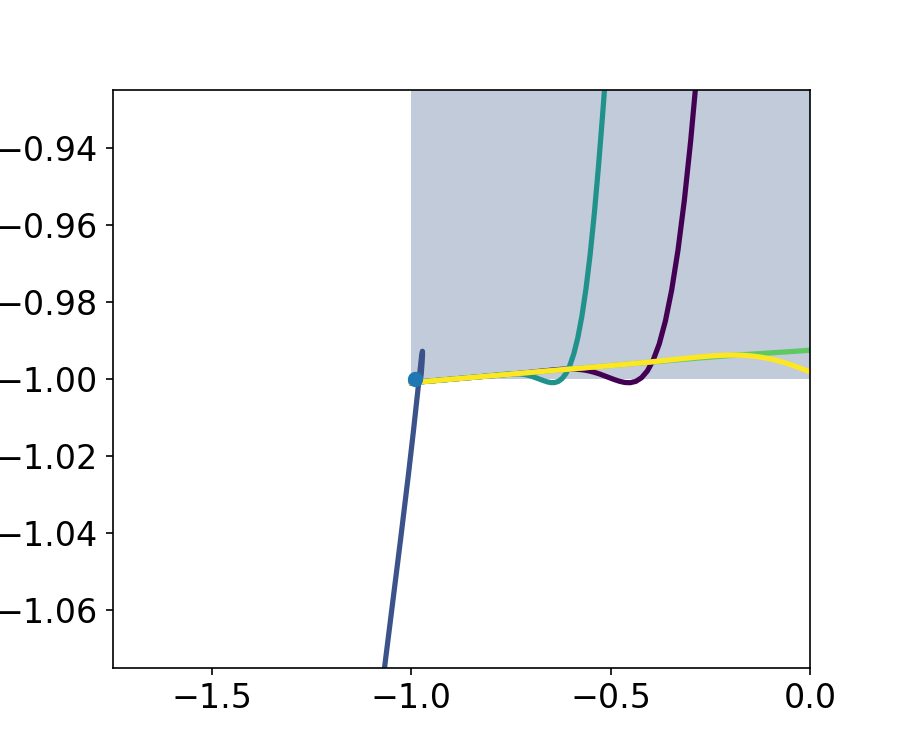}}
  \caption{Implementation of (a) projected monotone flow, (b) safe
    monotone flow ($\alpha=1.0$), and (c) recursive safe monotone flow
    ($\tau=0.25$) in a two-player game. The shaded region shows the
    constraint set $\Cc$ and the colored paths represent trajectories
    of the corresponding flow starting from various initial
    condition. (d) shows a zoomed-in portion of the boundary of $\Cc$
    to illustrate the practical safety of the recursive safe monotone
    flow.  }\label{fig:toy-example}
\end{figure}

\subsection{Receding Horizon Linear-Quadratic Dynamic Game}
We now discuss a more complex example, where the input to a plant is
specified by the solution to a variational inequality parameterized by
the state of the plant. To solve it, we interconnect the plant
dynamics with the safe monotone flow, and demonstrate that the anytime
property of the latter ensures good performance and satisfaction of
the constraints even when terminated early.  The plant
takes the form of a discrete-time linear time-invariant system with
two inputs,
\begin{equation}
  \label{eq:LTI}
  z(s + 1) = Az(s) + B_1w_1(s) + B_2w_2(s) ,
\end{equation}
where $A \in \real^{n_z \times n_z}$ and
$B_i \in \real^{n_z \times n_w}$ for $i \in \{1, 2\}$. 

We consider a
linear-quadratic dynamic game (LQDG) between two players, where each
player $i \in \{1, 2\}$ can influence the system \eqref{eq:LTI} by
choosing the corresponding input $w_i \in W_i \subset \real^{n_w}$.
We fix a time horizon $N > 0$ and an initial condition $z(0) = z_0$,
and define $J:(W_1)^N \times (W_2)^N \to \real$ as a quadratic payoff,
\begin{equation}
  \label{eq:LQDG-cost}
  \begin{aligned}
    J(w_1(\cdot),
    &w_2(\cdot)) = \norm{z(N)}^2_{Q_f} + \sum_{s=0}^{N - 1}\norm{z(s)}^2_{Q} + \norm{w_1(s)}_{R_1}^2 -
      \norm{w_2(s)}_{R_2}^2, 
  \end{aligned} 
\end{equation}
where $Q_f, Q \succeq 0$ and $R_1, R_2 \succ 0$.  The goal of player 1
is to minimize the payoff~\eqref{eq:LQDG-cost}, whereas the goal of
player 2 is to maximize it. This problem can be solved in closed form
when the constraints $W_i$ are trivial (cf. \cite[Chapter
6]{TB-GJO:99}, \cite{MP-KDP:10}), but must be solved numerically for
nontrivial constraints.

We first note the LQDG problem can be posed as a variational
inequality.  Indeed, let $\bar{z} = (z(1), \dots, z(N))$ and, for
$i \in \{1, 2\}$, let $\bar{w}_i = (w_i(0), \dots, w_i(N - 1))$. 
Define
\[
  \Ac =
  \begin{bmatrix}
    A \\ A^2 \\ \vdots \\ A^{N}
  \end{bmatrix},
  \qquad C_i =
  \begin{bmatrix}
    B_i & 0  & \cdots & 0 \\ 
    AB_i & B_i &  \cdots & 0 \\ 
    \vdots & \vdots & \ddots & \vdots \\ 
    A^{N - 1}B_i & A^{N - 2}B_i   & \cdots & B_i
  \end{bmatrix}.
\]
Next, letting $\bar{Q} = \text{diag}(Q, \dots, Q, Q_f)$
and $\bar{R}_i = \text{diag}(R_i, \dots, R_i)$,
and using the fact that
\[ \bar{z} = \Ac z_0 + C_1\bar{w}_1 + C_2\bar{w}_2,\]
we see that the
payoff function \eqref{eq:LQDG-cost} can be written as,
\[
  \begin{aligned}
    J(\bar{w}_1, \bar{w}_2) =
    &\begin{bmatrix}
       \bar{w}_1
       \\
       \bar{w}_2
     \end{bmatrix}^\top
      \begin{bmatrix}
        C_1^\top \bar{Q} C_1 + \bar{R}_1 & C_1^\top \bar{Q} C_2
        \\ 
        C_2^\top \bar{Q} C_1 & C_2^\top \bar{Q}C_2 - \bar{R}_2
      \end{bmatrix}
      \begin{bmatrix}
        \bar{w}_1
        \\
        \bar{w}_2
      \end{bmatrix} +2
      \begin{bmatrix}
        C_1^\top \bar{Q}\Ac z_0
        \\
    C_2^\top \bar{Q}\Ac z_0
      \end{bmatrix}^\top
      \begin{bmatrix}
        \bar{w}_1
        \\
        \bar{w}_2
  \end{bmatrix} + z_0^\top \Ac^\top \bar{Q}\Ac z_0.
  \end{aligned}
\]
Finally, letting $x = (\bar{w}_1, \bar{w}_2)$, we see that the LQDG problem
corresponds to the variational inequality $\VI(F(\cdot, z_0), \Cc)$, where
\begin{align*}
  F(x, z_0) 
  \! =\!
    \begin{bmatrix}
      C_1^\top \bar{Q} C_1 + \bar{R}_1 & C_1^\top \bar{Q} C_2
      \\
      -C_2^\top \bar{Q} C_1 & \bar{R}_2 - C_2^\top \bar{Q}C_2
    \end{bmatrix}
  \!
    \begin{bmatrix}
      \bar{w}_1
      \\ \bar{w}_2
    \end{bmatrix}
\!    +\!
    \begin{bmatrix}
      C_1^\top \bar{Q}\Ac z_0
      \\
      -C_2^\top \bar{Q}\Ac z_0
    \end{bmatrix}  
\end{align*}
and the constraint set is $\Cc = (W_1)^N \times (W_2)^N$. 
We assume that the problem data satisfies
\[
  \begin{bmatrix}
    C_1^\top \bar{Q} C_1 + \bar{R}_1 & 0
    \\
    0 & \bar{R}_2 - C_2^\top \bar{Q}C_2
  \end{bmatrix} \succ 0,
\]
in which case $F$ is strongly monotone.

For simulation purposes, we take $n_z = 5$, $n_w = 2$, $B_i = I$,
$W_i = \real^{2}_{\geq 0}$, and $A$ a marginally stable matrix
selected randomly.  We use the safe monotone flow to solve the
variational inequality and implement the solution in a receding
horizon manner: given the initial state $z_0$, we solve for the
optimal input sequence $(w_1(\cdot), w_2(\cdot))$ over the entire time
horizon, apply the input $(w_1(0), w_2(0))$ to \eqref{eq:LTI} to
obtain $z(1)$, update the initial condition $z_0 \leftarrow z(1)$ and
repeat. When $F$ is strongly monotone, on each iteration the flow
converges to the exact solution as $t \to \infty$. However, we also
consider here the effect of terminating the solver early at
some~$t = t_f < \infty$.

Figure~\ref{fig:lqdg-example} shows the results of the simulation.  In
Figure~\ref{fig:lqdg-example}(a), we plot $\norm{z(s)}$ for various
values of termination times. We denote the exact solution with
$t_f = \infty$.  The closed-loop dynamics with the exact solution to
the receding horizon LQDG is stabilizing, and as $t_f$ grows larger,
the early terminated solution drives the state of the system closer to
the origin.  In Figure \ref{fig:lqdg-example}(b), we plot the first
component of $w_1(s)$ in blue and the first component of $w_2(s)$ in
red.  Regardless of when terminated, the inputs satisfy the input
constraints on each iteration due to the safety properties of the safe
monotone flow.

\begin{figure}[!htb]
  \centering
  \subfigure[$\norm{z(s)}$ on each iteration]{
    \includegraphics[width=0.40\linewidth]{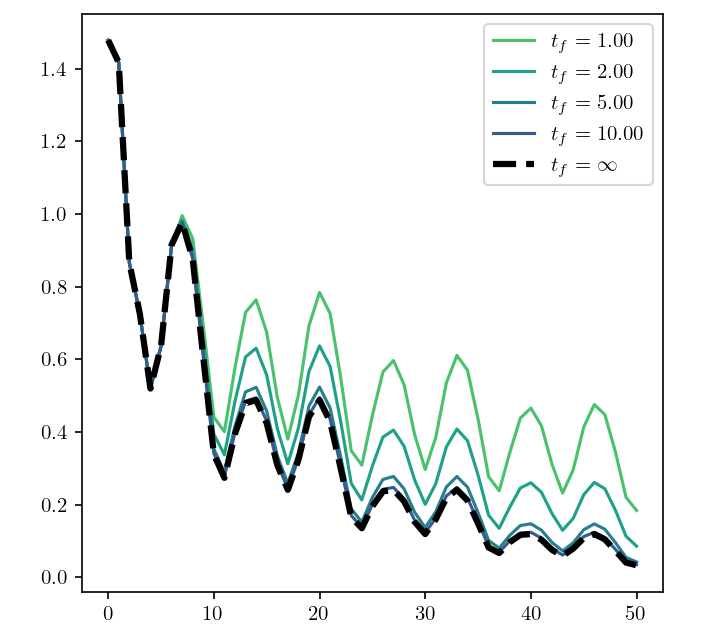}
  }
  \hspace{4ex}
  \subfigure[First component of $\{w_i(s)\}_{i \in \{1,2\}}$ on each
  iteration]{ 
    \includegraphics[width=0.40\linewidth]{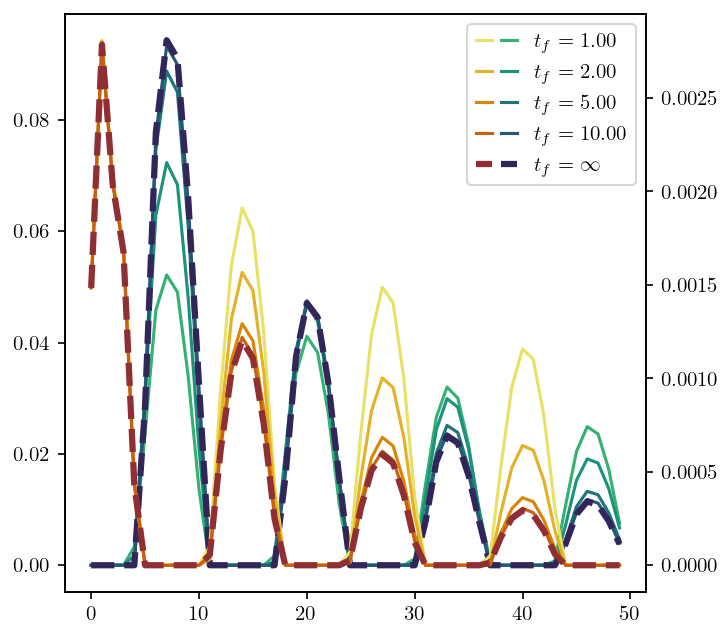}
  }
  \caption{Receding horizon implementation of the safe monotone flow
    solving a linear quadratic dynamic game for different choices of
    termination time $t_f$. The closed-loop implementation of the
    exact solution corresponds to $t_f = \infty$ (dashed lines). (a)
    We plot the evolution of $\norm{z(s)}$ in green. (b) We plot the
    evolution of the first component of $w_1(s)$ in blue-green (scale
    in left $y$-axis) and the first component of $w_2(x)$ in
    red-orange (scale in right $y$-axis). }
  \label{fig:lqdg-example}
\end{figure}

\section{Conclusions}
We have tackled the design of anytime algorithms to solve variational
inequalities as a feedback control problem.  Using techniques from
safety-critical control, we have synthesized three continuous-time
dynamics which find solutions to monotone variational inequalities:
the projected monotone flow, already well known in the literature, and
the novel safe monotone and recursive safe monotone flows.  The
equilibria of these flows correspond to solutions of the variational
inequality, and so we have embarked in the precise characterization of
their asymptotic stability properties. We have established asymptotic
stability of equilibria in the case of strong monotonicity, and
contractivity and exponential stability in the case of polyhedral
constraints. We have also shown that the safe monotone flow renders
the constraint forward invariant and asymptotically stable.  The
recursive safe monotone flow offers an alternative implementation that
does not necessitate the solution of a quadratic program along the
trajectories. This flow results from coupling two systems evolving on
different timescales, and we have established local exponential
stability and global attractivity of equilibria, as well as practical
safety guarantees.  We have illustrated in two game scenarios the
properties of the proposed flows and, in particular, their amenability
for interconnection and regulation of physical processes.  Future work
will develop methods for distributed network problems, \new{study
  time-varying and parametric problems}, consider applications to
feedback optimization, \new{explore connections between the recursive
  safe monotone flow and primal-dual flows}, and \new{analyze
  discretizations and the implementation on computational platforms of
  the proposed flows.}

\appendix
\section{Appendix: Proof of Lemma \ref{lem:V1}}
\label{ap:V1}
\begin{proof}
  Note that
  \[
    \begin{aligned}
      & \Dini_{\Gc_\alpha}\tilde{V}(x)
        = -(x - x^*)^\top F(x)  -\sum_{i=1}^{m}u_i(x - x^*)^\top\nabla g_i(x) -
        \sum_{j=1}^{k}v_j(x - x^*)^\top\nabla h_j(x).   
    \end{aligned}
  \]
  When $F$ is $\mu$-strongly monotone,
  \[-(x - x^*)^\top F(x) \leq -\mu\norm{x - x^*}^2 - (x - x^*)^\top
  F(x^*),\]
  and when $F$ is monotone, the previous inequality holds with $\mu=0$.
  Next, we rearrange \eqref{eq:KKT1} and use that $g_i$ is convex for
  all $i=1, \dots, m$ and $h_j$ is affine for all $j=1, \dots, m$ to
  obtain
  \[
    \begin{aligned}
      - (x - x^*)^\top F(x^*) &=\sum_{i=1}^{m}u_i^*(x - x^*)^\top\nabla g_i(x^*) +
      \sum_{j=1}^{k}v_j^*(x - x^*)^\top\nabla h_j(x^*)
      \\
      &\leq \sum_{i=1}^{m}u_i^*(g_i(x) - g_i(x^*)) +
      \sum_{j=1}^{k}v_j^*(h_j(x) - h_j(x^*)) \\
      &= (u^*)^\top (g(x) - g(x^*)) + (v^*)^\top h(x), 
    \end{aligned} 
  \]
  where the last equality follows from the fact that $h(x^*) = 0$. 
  By a similar line of reasoning, we have 
  \[
    \begin{aligned}
      -\sum_{i=1}^{m}&u_i(x - x^*)^\top\nabla g_i(x) -
        \sum_{j=1}^{k}v_j(x - x^*)^\top\nabla h_j(x)
      \\
      &\leq -\sum_{i=1}^{m}u_i(g_i(x) - g_i(x^*)) -
        \sum_{j=1}^{k}v_j(h_j(x) - h_j(x^*))
      \\
      &= -u^\top (g(x) - g(x^*)) - v^\top h(x).
    \end{aligned}
  \]
  The result follows by summing the previous two expressions.   
\end{proof}

\section{Appendix: Proof of Lemma \ref{lem:W-properties}}
\label{ap:W-properties}
\begin{proof}
  We first show that the solution to the optimization problem in the
  definition of $W$ is $\xi = \Gc_\alpha(x)$. Note that the
  constraints in the definition of $W$ in~\eqref{eq:C-lyap-candidate}
  and \eqref{eq:smooth-proj-grad} are identical. Let $J(x, \xi)$
  denote the objective function in the definition of $W$. Then
  $ J(x, \xi) - \frac{1}{2}\norm{\xi + F(x)}^2 =
  -\frac{1}{2}\norm{F(x)}^2$. Because the difference between the
  objective functions of \eqref{eq:smooth-proj-grad} and the
  definition of $W$ is independent of $\xi$, the two optimization
  problems have the same solution.  The claim now follows because $\Gc_\alpha(x)$ is 
  the minimizer solving \eqref{eq:smooth-proj-grad}. 

  Next we show that $W$ can be expressed in closed form as
  \eqref{eq:W-closed-form}.  Because the optimizer is
  $\xi = \Gc_\alpha(x)$, we have
  \begin{equation}
    \label{eq:W-inter}
    W(x) = \Gc_\alpha(x)^\top F(x) + \frac{1}{2}\norm{\Gc_\alpha(x)}^2.
  \end{equation} 
  Note that $(\Gc_\alpha(x), u, v)$ satisfies the optimality
  conditions \eqref{eq:KKT_proj} for all
  $(u, v) \in \Lambda_\alpha(x)$. Therefore, we can rearrange
  \eqref{eq:dlagrangian} to obtain
  $F(x) = -\Gc_\alpha(x) - \pfrac{g(x)}{x}^\top u -
  \pfrac{h(x)}{x}^\top v$. Next
  \[ \begin{aligned}
       \Gc_\alpha(x)^\top F(x) &= -\norm{\Gc_\alpha(x)}^2 - u^\top \pfrac{g(x)}{x}\Gc_\alpha(x) -
      v^\top \pfrac{h(x)}{x}\Gc_\alpha(x)
       \\
    &=-\norm{\Gc_\alpha(x)}^2 + \alpha u^\top g(x) + \alpha v^\top h(x),
     \end{aligned} \]
   where the second equality follows by rearranging 
   \eqref{eq:pequality} and \eqref{eq:transversality}. 
   Then, \eqref{eq:W-closed-form} follows by substituting the previous
   expression into~\eqref{eq:W-inter}.  

   Finally, we show~\eqref{eq:DW}. By \cite[Theorem~4.2]{VB-AL-ML:16}, it follows that 
    \[ \begin{aligned}
      \Dini_{\Gc_\alpha}W(x) &= \sup_{(u', v') \in \Lambda_\alpha(x)}
        \left\{\nabla_x L(x; \Gc_\alpha(x), u',
        v')^\top\Gc_\alpha(x)\right\}\\
        & \geq \nabla_x L(x; \Gc_\alpha(x), u, v)^\top\Gc_\alpha(x),
      \end{aligned} \] 
  where $L$ is defined in \eqref{eq:Lagrangian}. 
    By direct computation, we can verify that 
    \[
      \begin{aligned} 
        \nabla_x L(x; \xi, u, v)= Q(x, u)\xi +
          \alpha\pfrac{g(x)}{x}^\top u +
          \alpha\pfrac{h(x)}{x}^\top v 
      \end{aligned}
    \]
    Therefore 
    \[
      \begin{aligned}
        \nabla_x L(x; \Gc_\alpha(x),u, v)^\top\Gc_\alpha(x) &=
        \norm{\Gc_\alpha(x)}^2_{Q(x, u)} + \alpha u^\top \pfrac{g(x)}{x}\Gc_\alpha(x) + \alpha
          v^\top \pfrac{h(x)}{x} \Gc_\alpha(x) \\
          &= \norm{\Gc_\alpha(x)}^2_{Q(x, u)} - \alpha^2 u^\top
          g(x) - \alpha^2 v^\top h(x), 
      \end{aligned} 
    \]
    where the last equality above follows by
    rearranging~\eqref{eq:pequality} and \eqref{eq:transversality}. 
\end{proof}

\section{Appendix: Proof of Lemma \ref{lem:Ddelta}}
\label{ap:Ddelta}

\begin{proof}
  Note that $\delta_\epsilon$ corresponds to the $\ell^1$-penalty
  function for the set $\Cc$. By \cite[Proposition 3]{GdP-LG:89}, the
  directional derivative of $\delta_\epsilon$ is
  \[
    \begin{aligned}
      \delta'_\epsilon
      (x; \xi) =\; &\frac{1}{\epsilon}\sum_{i \in I_+(x)}\nabla g_i(x)^\top \xi 
        + \frac{1}{\epsilon}\sum_{i \in I_0(x)} [\nabla g_i(x)^\top  \xi ]_+ \\
      &+\frac{1}{\epsilon}\sum_{j \in I_h(x)}\sgn(h_j(x))\nabla h_j(x)^\top  \xi 
        + \frac{1}{\epsilon}\sum_{j \not\in I_h(x)}\abs{\nabla h_j(x)^\top \xi }.
    \end{aligned}
  \]
  Note
  $\Dini_{\Gc_\alpha}\delta_\epsilon(x) = \delta'_\epsilon(x;
  \Gc_\alpha(x))$.  Then~\eqref{eq:Ddelta} follows by noting
  that $\nabla g_i(x)^\top \Gc_\alpha(x) \leq -\alpha g_i(x)$ 
  for all~$i=1, \dots, m$ and
  $\nabla h_j(x)^\top \Gc_\alpha(x) = -\alpha h_j(x)$ for all~$j=1,
  \dots k$.  
\end{proof}

\section{Appendix: Proof of Lemma \ref{lem:qp-to-lp}}
\label{ap:equiv-opt}
The proof of Lemma \ref{lem:qp-to-lp} relies on the following lemma. 

\begin{lemma}
  \label{lem:equiv-opt}
  Let $\Uc \subset \real^{p}$ and $\Vc \subset \real^{q}$, 
  and consider the optimization problems 
  \begin{align}
    \label{eq:equiv-opt-1}
    &\min_{y \in \Uc} J(y), \\
    \label{eq:equiv-opt-2}
    &\min_{z \in \Vc} \bar{J}(z),
  \end{align}
  where $J:\Uc \to \real$ and $\bar{J}:\Vc \to \real$. 
  Suppose there exists a map $\varphi:\Uc \to \Vc$
  such that
  \begin{enumerate}
    \item $\bar{J} = J \circ \varphi$; 
    \item $\varphi$ is surjective;
  \end{enumerate}
  Then $z^* \in \Vc$ is a solution to \eqref{eq:equiv-opt-2} if and only if $y^* = \varphi(z^*) \in \Uc$
  is a solution to \eqref{eq:equiv-opt-1}. 
\end{lemma}

\begin{proof}
  We begin with the forward direction. Suppose that $z^* \in \Vc$ is a solution to \eqref{eq:equiv-opt-2}, 
  and let $y^* = \varphi(z^*)$. Then $J(y^*) = \bar{J}(z^*)$ and for all $y \in \Uc$, there exists $z_y \in \Vc$ 
  such that $\varphi(z_y) = y$, and
  \[ J(y) = \bar{J}(z_y) \geq \bar{J}(z^*) = J(y^*). \]
  Hence, $y^*$ solves \eqref{eq:equiv-opt-1}. To prove the reverse direction, suppose that $y^* \in \Uc$ solves \eqref{eq:equiv-opt-1} and
  choose $z^* \in \Vc$ such that $y^* = \varphi(z^*)$. Next, for all $z \in \Vc$,
  \[ \bar{J}(z) = J(\varphi(z)) \geq J(y^*) = \bar{J}(z^*), \] 
  so $z^*$ must be a solution to \eqref{eq:equiv-opt-2}. 
\end{proof}

We now proceed with the proof of Lemma \ref{lem:qp-to-lp}. 
\begin{proof}
  Consider the quadratic program \eqref{eq:qp-lemma}, and partition $\tilde{Q} \in \real^{(m + k)} \times \real^{(m + k)}$
  and $c \in \real^{m} \times \real^{k}$ as
  \[ \tilde{Q} =  \begin{bmatrix}
    \tilde{Q}_{11} & \tilde{Q}_{12} \\
    \tilde{Q}_{21} & \tilde{Q}_{22}
  \end{bmatrix} \qquad c = \begin{bmatrix}
    c_1 \\ c_2
  \end{bmatrix}. \] 
  Next, define the following quadratic program 
  \begin{equation}
    \label{eq:qp-lemma-aug}
    \underset{(u, v_+, v_-) \in \real^{m}_{\geq 0} \times \real^{k}_{\geq 0} \times \real^{k}_{\geq 0}}{\textnormal{min}}\;
    \frac{1}{2}\begin{bmatrix}
      u \\ v_+ \\ v_-
    \end{bmatrix}^\top \begin{bmatrix}
      \tilde{Q}_{11} & \tilde{Q}_{12} & -\tilde{Q}_{12} \\
      \tilde{Q}_{21} & \tilde{Q}_{22} & -\tilde{Q}_{22} \\ 
      -\tilde{Q}_{21} & -\tilde{Q}_{22} & \tilde{Q}_{22}
    \end{bmatrix} \begin{bmatrix}
      u \\ v_+ \\ v_-
    \end{bmatrix}  + \begin{bmatrix} c_1 \\ c_2 \\ -c_2 \end{bmatrix}^\top \begin{bmatrix}
      u \\ v_+ \\ v_-
    \end{bmatrix} + p ,
  \end{equation}
  and the following linear program 
  \begin{equation}
    \label{eq:lp-lemma-aug}
    \underset{(u, v_+, v_-) \in \real^{m}_{\geq 0} \times \real^{k}_{\geq 0} \times \real^{k}_{\geq 0}}{\textnormal{min}}\;
    \left(\begin{bmatrix}
      \tilde{Q}_{11} & \tilde{Q}_{12} & -\tilde{Q}_{12} \\
      \tilde{Q}_{21} & \tilde{Q}_{22} & -\tilde{Q}_{22} \\ 
      -\tilde{Q}_{21} & -\tilde{Q}_{22} & \tilde{Q}_{22}
    \end{bmatrix}\begin{bmatrix}
      u^* \\ v_+^* \\ v_-^*
    \end{bmatrix} + \begin{bmatrix} c_1 \\ c_2 \\ -c_2 \end{bmatrix} \right)^\top \begin{bmatrix}
      u \\ v_+ \\ v_-
    \end{bmatrix},
  \end{equation}
  where $v_+^* \in \real^{k}_{\geq 0}$ and $v_-^* \in \real^{k}_{\geq 0}$ 
  are any vectors such 
  that $v^* = v_+^* - v_-^*$. By applying Lemma \ref{lem:equiv-opt} with the surjection 
  $\varphi:\real^{m}_{\geq 0} \times \real^{k}_{\geq 0} \times \real^{k}_{\geq 0} \to \real^{m}_{\geq 0} \times \real^{k}$, 
  where $\varphi(u, v_+, v_-) = (u, v_+ - v_-)$, we see that $(u^*, v^*)$ is a solution to \eqref{eq:qp-lemma} 
  if and only if $(u^*, v_+^*, v_-^*)$ is a solution to \eqref{eq:qp-lemma-aug}. Next 
  by \cite[Lemma 2.1]{NDY:95-MOR}, $(u^*, v_+^*, v_-^*)$ is a solution to \eqref{eq:qp-lemma-aug} 
  if and only if $(u^*, v_+^*, v_-^*)$ is a solution to \eqref{eq:lp-lemma-aug}. Finally, by applying 
  Lemma \ref{lem:equiv-opt} once more, we see that $(u^*, v_+^*, v_-^*)$ is a solution to \eqref{eq:lp-lemma-aug} if 
  and only if $(u^*, v^*)$ is a solution to \eqref{eq:lp-lemma}, and the result follows.  
\end{proof}


\begin{thebibliography}{10}

  \bibitem{AA-JC:22-cdc}
  A.~Allibhoy and J.~Cort\'es.
  \newblock Safety-critical control as a design paradigm of anytime solvers of variational inequalities.
  \newblock In {\em {IEEE} Conf.\ on Decision and Control}, pages 6211--6216, Cancun, Mexico, December 2022.
  
  \bibitem{KA-LH-HU:58}
  K.~Arrow, L~Hurwitz, and H.~Uzawa.
  \newblock {\em Studies in Linear and Non-Linear Programming}.
  \newblock Stanford University Press, Stanford, CA, 1958.
  
  \bibitem{RWB:91}
  R.~W. Brockett.
  \newblock Dynamical systems that sort lists, diagonalize matrices, and solve linear programming problems.
  \newblock {\em Linear Algebra and Its Applications}, 146:79--91, 1991.
  
  \bibitem{UH-JBM:94}
  U.~Helmke and J.~B. Moore.
  \newblock {\em Optimization and Dynamical Systems}.
  \newblock Springer, 1994.
  
  \bibitem{CH:73}
  C.~Henry.
  \newblock An existence theorem for a class of differential equations with multivalued right-hand side.
  \newblock {\em Journal of Mathematical Analysis and Applications}, 41:179--186, 1973.
  
  \bibitem{HB:73}
  H.~Brezis.
  \newblock {\em Operateurs Maximaux Monotones et Semi-groupes de Contractions dans Les Espaces de Hilbert}, volume~5 of {\em North-Holland Mathematics Studies}.
  \newblock Elsevier, Oxford, UK, 1973.
  
  \bibitem{AN-DZ:96}
  A.~Nagurney and D.~Zhang.
  \newblock {\em Projected Dynamical Systems and Variational Inequalities with Applications}, volume~2 of {\em International Series in Operations Research and Management Science}.
  \newblock Kluwer Academic Publishers, Dordrecht, The Netherlands, 1996.
  
  \bibitem{BB-AD-CL-VA:06}
  B.~Brogliato, A.~Daniilidis, C.~Lemar\'echal, and V.~Acary.
  \newblock On the equivalence between complementarity systems, projected systems, and differential inclusions.
  \newblock {\em Systems \& Control Letters}, 55(1):45--51, 2006.
  
  \bibitem{WPMHH-JMS-SW:00}
  W.~P. M.~H. Heemels, J.~M. Schumacher, and S.~Weiland.
  \newblock Projected dynamical systems in a complementarity formalism.
  \newblock {\em Operations Research Letters}, 27:83--91, 2000.
  
  \bibitem{WPMHH-MKC-MFH:20}
  W.~P. M.~H. Heemels, M.~K. Camlibel, and M.~F. Heertjes.
  \newblock Oblique projected dynamical systems and incremental stability under state constraints.
  \newblock {\em IEEE Control Systems Letters}, 4(4):1060--1065, 2020.
  
  \bibitem{AH-SB-FD:21}
  A.~Hauswirth, S.~Bolognani, and F.~D{\"o}rfler.
  \newblock Projected dynamical systems on irregular, non-{E}uclidean domains for nonlinear optimization.
  \newblock {\em SIAM Journal on Control and Optimization}, 59(1):635--668, 2021.

  \bibitem{AT-BB-CP:18}
  A.~Tanwani, B.~Brogliato, and C.~Prieur.
  \newblock Well-posedness and output regulation for implicit time-varying evolution variational inequalities.
  \newblock {\em SIAM Journal on Control and Optimization}, 56(2):751--781, 2018.
  
  \bibitem{BB-AT:20}
  B.~Brogliato and A.~Tanwani.
  \newblock Dynamical systems coupled with monotone set-valued operators: Formalisms, applications, well-posedness, and stability.
  \newblock {\em SIAM Review}, 62(1):3--129, 2020.
  
  \bibitem{MC-EDA-AB:20}
  M.~Colombino, E.~Dall'Anese, and A.~Bernstein.
  \newblock Online optimization as a feedback controller: Stability and tracking.
  \newblock {\em IEEE Transactions on Control of Network Systems}, 7(1):422--432, 2020.
  
  \bibitem{AH-ZH-SB-GH-FD:24}
  A.~Hauswirth, Z.~He, S.~Bolognani, G.~Hug, and F.~D{\"o}rfler.
  \newblock Optimization algorithms as robust feedback controllers.
  \newblock {\em Annual Reviews in Control}, 57:100941, 2024.
  
  \bibitem{ED-AS:16}
  E.~Dall'Anese and A.~Simonetto.
  \newblock Optimal power flow pursuit.
  \newblock {\em IEEE Transactions on Smart Grid}, 9(2):942--952, 2016.
  
  \bibitem{LSPL-JWSP-EM:21}
  L.~S.~P. Lawrence, J.~W. Simpson-Porco, and E.~Mallada.
  \newblock Linear-convex optimal steady-state control.
  \newblock {\em IEEE Transactions on Automatic Control}, 66(11):5377--5384, 2021.
  
  \bibitem{SHL-FP-JCD:02}
  S.~H. Low, F.~Paganini, and J.~C. Doyle.
  \newblock Internet congestion control.
  \newblock {\em {IEEE} Control Systems}, 22(1):28--43, 2002.
  
  \bibitem{GB-JC-JIP-EDA:22-tcns}
  G.~Bianchin, J.~Cort\'{e}s, J.~I. Poveda, and E.~Dall'Anese.
  \newblock Time-varying optimization of {LTI} systems via projected primal-dual gradient flows.
  \newblock {\em IEEE Transactions on Control of Network Systems}, 9(1):474--486, 2022.
  
  \bibitem{VH-AH-LO-SB-FD:21}
  V.~H\"aberle, A.~Hauswirth, L.~Ortmann, S.~Bolognani, and F.~D\"orfler.
  \newblock Non-convex feedback optimization with input and output constraints.
  \newblock {\em IEEE Control Systems Letters}, 5(1):343--348, 2021.
  
  \bibitem{AA-JWSP-LP:22}
  A.~Agarwal, J.~W. Simpson-Porco, and L.~Pavel.
  \newblock Game-theoretic feedback-based optimization.
  \newblock {\em IFAC-PapersOnLine}, 55(13):174--179, 2022.
  
  \bibitem{GB-DMPL-MHDB-SB-RSS-JL-FD:24}
  G.~Belgioioso, D.~Liao-McPherson, M.~H. de~Badyn, S.~Bolognani, R.~S. Smith, J.~Lygeros, and F.~D{\"o}rfler.
  \newblock Online feedback equilibrium seeking.
  \newblock {\em IEEE Transactions on Automatic Control}, 2024.
  \newblock To appear.
  
  \bibitem{KBA-MK:03}
  K.~B. Ariyur and M.~Krsti{\'c}.
  \newblock {\em Real-Time Optimization by Extremum-Seeking Control}.
  \newblock Wiley, New York, 2003.
  
  \bibitem{PF-MK-TB:12}
  P.~Frihauf, M.~Krsti{\'c}, and T.~Ba{\c s}ar.
  \newblock {N}ash equilibrium seeking in noncooperative games.
  \newblock {\em IEEE Transactions on Automatic Control}, 57(5):1192--1207, 2012.
  
  \bibitem{AW-MK-AS:23}
  A.~Williams, M.~Krstic, and A.~Scheinker.
  \newblock Semi-global practical extremum seeking with practical safety.
  \newblock In {\em {IEEE} Conf.\ on Decision and Control}, pages 6774--6779. IEEE, 2023.
  
  \bibitem{AA-JC:24-tac}
  A.~Allibhoy and J.~Cort\'es.
  \newblock Control barrier function-based design of gradient flows for constrained nonlinear programming.
  \newblock {\em IEEE Transactions on Automatic Control}, 69(6):3499--3514, 2024.
  
  \bibitem{ADA-SC-ME-GN-KS-PT:19}
  A.~D. Ames, S.~Coogan, M.~Egerstedt, G.~Notomista, K.~Sreenath, and P.~Tabuada.
  \newblock Control barrier functions: theory and applications.
  \newblock In {\em {E}uropean {C}ontrol {C}onference}, pages 3420--3431, Naples, Italy, 2019.
  
  \bibitem{MC-CB:23}
  M.~Cohen and C.~Belta.
  \newblock {\em Adaptive and Learning-Based Control of Safety-Critical Systems}.
  \newblock Springer, New York, 2023.
  
  \bibitem{FB:99}
  F.~Blanchini.
  \newblock Set invariance in control.
  \newblock {\em Automatica}, 35(11):1747--1767, 1999.
  
  \bibitem{PW-FA:07}
  P.~Wieland and F.~Allg{\"o}wer.
  \newblock Constructive safety using control barrier functions.
  \newblock {\em IFAC Proceedings Volumes}, 40(12):462--467, 2007.
  
  \bibitem{ADA-XX-JWG-PT:17}
  A.~D. Ames, X.~Xu, J.~W. Grizzle, and P.~Tabuada.
  \newblock Control barrier function based quadratic programs for safety critical systems.
  \newblock {\em IEEE Transactions on Automatic Control}, 62(8):3861--3876, 2017.
  
  \bibitem{WX-CGC-CB:23}
  W.~Xiao, C.~G. Cassandras, and C.~Belta.
  \newblock {\em Safe Autonomy with Control Barrier Functions: Theory and Applications}.
  \newblock Synthesis Lectures on Computer Science. Springer, New York, 2023.
  
  \bibitem{GD-WPMHH:23}
  G.~Delimpaltadakis and W.~P. M.~H. Heemels.
  \newblock On the relationship between control barrier functions and projected dynamical systems.
  \newblock In {\em {IEEE} Conf.\ on Decision and Control}, page 770–775, Singapore, 2023.
  
  \bibitem{GD-JC-WPMHH:24}
  G.~Delimpaltadakis, J.~Cort\'es, and W.~P. M.~H. Heemels.
  \newblock Continuous approximations of projected dynamical systems via control barrier functions.
  \newblock {\em IEEE Transactions on Automatic Control}, 2024.
  \newblock Submitted.
  
  \bibitem{FF-JSP:03}
  F.~Facchinei and J.-S. Pang.
  \newblock {\em Finite-Dimensional Variational Inequalities and Complementarity Problems}.
  \newblock Springer, 2003.
  
  \bibitem{RTR-RJBW:98}
  R.~T. Rockafellar and R.~J.~B. Wets.
  \newblock {\em Variational Analysis}, volume 317 of {\em Comprehensive Studies in Mathematics}.
  \newblock Springer, New York, 1998.
  
  \bibitem{JPA-AC:84}
  J.~P. Aubin and A.~Cellina.
  \newblock {\em Differential Inclusions}, volume 264 of {\em Grundlehren der mathematischen Wissenschaften}.
  \newblock Springer, New York, 1984.
  
  \bibitem{VA-BB:08}
  V.~Acary and B.~Brogliato.
  \newblock {\em Numerical Methods for Nonsmooth Dynamical Systems: Applications in Mechanics and Electronics}, volume~35 of {\em Lecture Notes in Applied and Computational Mechanics}.
  \newblock Springer, New York, 2008.
  
  \bibitem{RTR:70}
  R.~T. Rockafellar.
  \newblock {\em Convex Analysis}.
  \newblock Princeton University Press, 1970.
  
  \bibitem{DWP:73}
  D.~W. Peterson.
  \newblock A review of constraint qualifications in finite-dimensional spaces.
  \newblock {\em SIAM Review}, 15(3):639--654, 1973.
  
  \bibitem{JC:08-csm-yo}
  J.~Cort{\'e}s.
  \newblock Discontinuous dynamical systems - a tutorial on solutions, nonsmooth analysis, and stability.
  \newblock {\em {IEEE} Control Systems}, 28(3):36--73, 2008.
  
  \bibitem{DG-BB:04}
  D.~Goeleven and B.~Brogliato.
  \newblock Stability and instability matrices for linear evolution variational inequalities.
  \newblock {\em IEEE Transactions on Automatic Control}, 49(4):521--534, 2004.
  
  \bibitem{SB-LV:09}
  S.~Boyd and L.~Vandenberghe.
  \newblock {\em Convex Optimization}.
  \newblock Cambridge University Press, Cambridge, UK, 2009.
  
  \bibitem{SMR:75}
  S.~M. Robinson.
  \newblock Stability theory for systems of inequalities. {P}art {I}: {L}inear systems.
  \newblock {\em SIAM Journal on Numerical Analysis}, 12(5):754--769, 1975.
  
  \bibitem{JL:95}
  J.~Liu.
  \newblock Sensitivity analysis in nonlinear programs and variational inequalities via continuous selections.
  \newblock {\em SIAM Journal on Control and Optimization}, 33(4):1040--1060, 1995.
  
  \bibitem{AD-SJ-FB:22}
  A.~Davydov, S.~Jafarpour, and F.~Bullo.
  \newblock Non-{E}uclidean contraction theory for robust nonlinear stability.
  \newblock {\em IEEE Transactions on Automatic Control}, 67(12):6667--6681, 2022.
  
  \bibitem{FB:23}
  F.~Bullo.
  \newblock {\em Contraction Theory for Dynamical Systems}.
  \newblock Kindle Direct Publishing, {1.1} edition, 2023.
  
  \bibitem{NDY:95-MOR}
  N.~D. Yen.
  \newblock Lipschitz continuity of solutions of variational inequalities with a parametric polyhedral constraint.
  \newblock {\em Mathematics of Operations Research}, 20(3):695--708, 1995.
  
  \bibitem{HLR-PF:10}
  H.~L. Royden and P.~Fitzpatrick.
  \newblock {\em Real Analysis}.
  \newblock Prentice Hall, 2010.
  
  \bibitem{TK:56}
  T.~Kose.
  \newblock Solutions of saddle value problems by differential equations.
  \newblock {\em Econometrica}, 24(1):59--70, 1956.
  
  \bibitem{DF-FP:10}
  D.~Feijer and F.~Paganini.
  \newblock Stability of primal-dual gradient dynamics and applications to network optimization.
  \newblock {\em Automatica}, 46:1974--1981, 2010.
  
  \bibitem{AC-BG-JC:17-sicon}
  A.~Cherukuri, B.~Gharesifard, and J.~Cort{\'e}s.
  \newblock Saddle-point dynamics: conditions for asymptotic stability of saddle points.
  \newblock {\em SIAM Journal on Control and Optimization}, 55(1):486--511, 2017.
  
  \bibitem{AC-EM-SHL-JC:18-tac}
  A.~Cherukuri, E.~Mallada, S.~H. Low, and J.~Cort\'{e}s.
  \newblock The role of convexity in saddle-point dynamics: Lyapunov function and robustness.
  \newblock {\em IEEE Transactions on Automatic Control}, 63(8):2449--2464, 2018.
  
  \bibitem{JC-SKN:19-jnls}
  J.~Cort\'es and S.~K. Niederl\"{a}nder.
  \newblock Distributed coordination for nonsmooth convex optimization via saddle-point dynamics.
  \newblock {\em Journal of Nonlinear Science}, 29(4):1247--1272, 2019.
  
  \bibitem{LC-FB-EDA:23}
  L.~Cothren, F.~Bullo, and E.~Dall'Anese.
  \newblock Singular perturbation via contraction theory.
  \newblock {\em arXiv preprint arXiv:2310.07966}, 2023.
  
  \bibitem{SK-ADA:18}
  S.~Kolathaya and A.~D. Ames.
  \newblock Input-to-state safety with control barrier functions.
  \newblock {\em IEEE Control Systems Letters}, 3(1):108--113, 2018.

  \bibitem{SB-NP-EC-BP-JE:11}
  S.~Boyd, N.~Parikh, E.~Chu, B.~Peleato, and J.~Eckstein.
  \newblock Distributed optimization and statistical learning via the alternating direction method of multipliers.
  \newblock {\em Foundations and Trends in Machine Learning}, 3(1):1--122, 2011.
  
  \bibitem{TB-GJO:99}
  T.~Ba{\c s}ar and G.~J. Olsder.
  \newblock {\em Dynamic Noncooperative Game Theory}.
  \newblock SIAM, 2 edition, 1999.
  
  \bibitem{MP-KDP:10}
  M.~Pachter and K.~D. Pham.
  \newblock Discrete-time linear-quadratic dynamic games.
  \newblock {\em Journal of Optimization Theory \& Applications}, 146:151--179, 2010.
  
  \bibitem{VB-AL-ML:16}
  V.~Bondarevsky, A.~Leschov, and L.~Minchenko.
  \newblock Value functions and their directional derivatives in parametric nonlinear programming.
  \newblock {\em Journal of Optimization Theory \& Applications}, 171(2):440--464, 2016.
  
  \bibitem{GdP-LG:89}
  G.~{Di~Pillo} and L.~Grippo.
  \newblock Exact penalty functions in constrained optimization.
  \newblock {\em SIAM Journal on Control and Optimization}, 27(6):1333--1360, 1989.

  \end{thebibliography}
\end{document}